\newtheorem{theorem}{Theorem}
\newtheorem{lemma}{Lemma}
\newtheorem{proposition}{Proposition}
\newtheorem{corollary}{Corollary}
\newtheorem{definition}{Definition}
\theoremstyle{remark}
\newtheorem{remark}{Remark}[section]
\numberwithin{equation}{section}
\newcommand{\C}{{\mathbb C}}       
\newcommand{\R}{{\mathbb R}}       
\newcommand{\internalcomment}[1]{}
\begin{document}

\title[A criterion for the reconstruction of a holomorphic function]
{A criterion for the explicit reconstruction of a holomorphic function from its restrictions on lines}

\author{Amadeo Irigoyen}

\address{Universitat de Barcelona, Gran Via de les Corts Catalanes, 585,
08007 Barcelona, Spain}

\email{
\begin{minipage}[t]{5cm}
amadeo.irigoyen@ub.edu
\end{minipage}
}

\begin{abstract}

We deal with a problem of the explicit reconstruction
of any holomorphic function $f$ on
$\mathbb{C}^2$ from its restricions on a union of
complex lines. The validity of such a reconstruction
essentially depends on the mutual repartition of these lines,
condition that can be analytically described.
The motivation of 
this problem comes also from possible applications 
in mathematical economics and medical imaging.

\end{abstract}

\maketitle

\tableofcontents

\section{Introduction}

\subsection{Presentation of the problem}

In this paper we deal with a problem of the reconstruction of
a holomorphic function from its restrictions on analytic
subvarieties. The general presentation is the following:
let $f$ be a holomorphic function on a domain
$\Omega\subset\C^n$ and
$\{Z_j\}_{j=1}^N$ a family of analytic subvarieties
of $\Omega$. We assume that we just know the data
$f_{|\{Z_j\}_{j=1}^N}:=\{f_{|Z_j}\}_{j=1}^N$
and we want to find $f$. One can
give interpolation functions
$f_N\in\mathcal{O}\left(\Omega\right)$ that satisfy
${f_N}_{|\{Z_j\}_{j=1}^N}=f_{|\{Z_j\}_{j=1}^N}$
(see~\cite{berndtsson}),
but generally $f_N\neq f$ (since there exist
nonzero holomorphic functions that vanish on a finite
union of analytic subvarieties). Then a natural way is
to consider an infinite family of subvarieties
$\{Z_j\}_{j=1}^{\infty}$ and construct the associate interpolating
$f_{\{Z_j\}_{j\geq1}}$ as
$\lim_{N\rightarrow\infty}f_N$. In this case the uniqueness
of the interpolating function will certainly be made sure
but now without any guaranty of the convergence of the sequence
$\left(f_N\right)_{N\geq1}$.
Moreover, in case of positive result,
we are motivated to give explicit reconstruction formula.

In this paper we will deal with the special case of
$\C^2$ and a family of distinct complex lines that cross the origin.
Such a family can be described as
\begin{eqnarray}\label{lines}
\left\{
\{z\in\C^2,\;z_1-\eta_jz_2=0\}
\right\}_{j\geq1}
\,,
\end{eqnarray}
with $\eta_j\in\C$ all different, that we will simply
denote by
$\eta=\{\eta_j\}_{j\geq1}$ (without loss of generality
we can forget the line $\{z_2=0\}$ that is associate to
$\eta_0=\infty$). On the other hand, we consider an entire
function $f\in\mathcal{O}\left(\C^2\right)$
and assume that we know all its restrictions
$f_j\in\mathcal{O}(\C),\,\forall\,j\geq1$, where
\begin{eqnarray}
\{f_j\}_{j\geq1}
& := & 
\left\{
f_{|\{z_1=\eta_jz_2\}}
\right\}_{j\geq1}
\,.
\end{eqnarray}
A way to give the interpolation function
$f_N$ is the one that uses
the following relation that is proved in~\cite{irigoyen3}
by using residues and principal values:
$\mathbb{B}_2$ (resp. $\mathbb{S}_2$) being the unit ball
$\{z\in\C^2,\,|z_1|^2+|z_2|^2<1\}$
(resp. unit sphere $\{\,|z_1|^2+|z_2|^2=1\}$), one has,
$\forall\,z\in\mathbb{B}_2$,
\begin{eqnarray*}\nonumber
f(z)
& = &
\lim_{\varepsilon\rightarrow0}
\frac{1}{(2i\pi)^2}
\int_{\zeta\in\mathbb{S}_2,
\left|
\prod_{j=1}^N
(\zeta_1-\eta_j\zeta_2)
\right|
=\varepsilon}
\frac{
f(\zeta)
\det{\left(\overline{\zeta},P_N(\zeta,z)\right)}
\,\omega(\zeta)
}
{\prod_{j=1}^N(\zeta_1-\eta_j\zeta_2)
(1-<\overline{\zeta},z>)}
\\
& &
-\;
\lim_{\varepsilon\rightarrow0}
\frac{\prod_{j=1}^N(z_1-\eta_j z_2)}{(2i\pi)^2}
\int_{\zeta\in\mathbb{S}_2,
\left|
\prod_{j=1}^N
(\zeta_1-\eta_j\zeta_2)
\right|
>\varepsilon}
\frac{f(\zeta)\,\omega'\left(\overline{\zeta}\right)\wedge\omega(\zeta)}
{\prod_{j=1}^N(z_1-\eta_j z_2)
(1-<\overline{\zeta},z>)^2}
\,,
\end{eqnarray*}
with
\begin{eqnarray*}
\begin{cases}
\omega'(\zeta)=
\zeta_1d\zeta_2-\zeta_2d\zeta_1\,,
\\
\omega(\zeta)=d\zeta_1\wedge d\zeta_2\,,
\end{cases}
\end{eqnarray*}
and
$P_N(\zeta,z)\in
\left(
\mathcal{O}\left(\C^2\right)
\right)^2$
satisfying, for all
$(\zeta,z)\in\C^2\times\C^2$, 
\begin{eqnarray*}
<P_N(\zeta),\zeta-z>
& = &
P_{N,1}(\zeta,z)(\zeta_1-z_1)
+P_{N,2}(\zeta,z)(\zeta_2-z_2)
\\
& = &
\prod_{j=1}^N
(\zeta_1-\eta_j\zeta_2)
-
\prod_{j=1}^N
(z_1-\eta_j z_2)
\,.
\end{eqnarray*}
Both integrals can be explicited and yield to
(Section~\ref{preliminar}, Lemma~\ref{prelimformula}):
$\forall\,z\in\C^2$,
\begin{eqnarray}\label{relation}
f(z) & = &
E_N(f;\eta)(z)
-R_N(f;\eta)(z)
+\sum_{k+l\geq N}
a_{k,l}z_1^kz_2^l
\,,
\end{eqnarray}
where
$\sum_{k,l\geq0}a_{k,l}z_1^kz_2^l$ is
the Taylor expansion of $f$,
\begin{eqnarray}\label{defEN}\nonumber
E_N(f;\eta)(z)
& := &
\sum_{p=1}^N
\left(
\prod_{j=p+1}^N
(z_1-\eta_j z_2)
\right)
\sum_{q=p}^N
\frac{1+\eta_p\overline{\eta_q}}{1+|\eta_q|^2}
\frac{1}
{\prod_{j=p,j\neq q}^N(\eta_q-\eta_j)}
\times
\\
& &
\times
\sum_{m\geq N-p}
\left(
\frac{z_2+\overline{\eta_q}z_1}{1+|\eta_q|^2}
\right)^{m-N+p}
\frac{1}{m!}
\frac{\partial^m}{\partial v^m}|_{v=0}
[f(\eta_qv,v)]
\,.
\end{eqnarray}
and
\begin{eqnarray}\label{defRN}
& &
R_N(f;\eta)(z):=
\sum_{p=1}^N
\left(
\prod_{j=1,j\neq p}^N\frac{z_1-\eta_jz_2}{\eta_p-\eta_j}
\right)
\sum_{k+l\geq N}a_{k,l}\eta_p^k
\left(
\frac{z_2+\overline{\eta_p}z_1}{1+|\eta_p|^2}
\right)^{k+l-N+1}.
\end{eqnarray}

The formula $E_N(f;\eta)$ is an explicit interpolation formula
constructed with the data
$(f_1,\ldots,f_N)$: it is an entire function that
coincides with $f$ on the first $N$ lines, ie
$\forall\,p=1,\ldots,N$,
\begin{eqnarray*}
\left(E_N(f;\eta)\right)_{|\{z_1=\eta_pz_2\}}
& = &
f_{|\{z_1=\eta_pz_2\}}
\,.
\end{eqnarray*}
As $N\rightarrow\infty$, the function
$f-E_N(f;\eta)$ will be a holomorphic function that will vanish
on an increasing number of lines. If $E_N(f;\eta)$ is uniformly bounded
on any compact subset $K\subset\C^2$ (in particular
if it converges to some function), then by the Stiltjes-Vitali-Montel
Theorem, there will be a subsequence of $f-E_N(f;\eta)$ that
will converge to a holomorphic function that will vanish on
an infinite number of lines. So this limit will be $0$. In fact,
any subsequence will have another subsequence that will also converge
to $0$. It will follow that the sequence
$f-E_N(f;\eta)$ will converge to $0$, ie
$E_N(f;\eta)$ will converge to $f$
(see \cite{alabut} and \cite{loganshepp} for examples of
positive results of reconstruction).

The problem is that we do not have any control
{\em a priori} of the function $E_N(f;\eta)$ and do not have any idea
if $E_N(f;\eta)$ is always uniformly bounded for any
$f\in\mathcal{O}\left(\C^2\right)$. In fact, this is false and
one can construct special subsets $\{\eta_j\}_{j\geq1}$
(see Section~\ref{ctrex}, Proposition~\ref{ctrexample})
for which $E_N(f;\eta)$ is not bounded for any $f$ (ie
there exists $f\in\mathcal{O}\left(\C^2\right)$ such that
$E_N(f;\eta)$ does not converge).
Conversely, if we choose for $f$ a polynomial function,
the formula $E_N(f;\eta)$ will converge for any subset
$\{\eta_j\}_{j\geq1}$. Finally, if we choose special subsets
for $\{\eta_j\}_{j\geq1}$
like real lines or circles of $\C$, the formula $E_N(f;\eta)$
will always converge to $f$ (Theorem~\ref{superbe}).
Therefore we want to reduce the problem of the convergence of
$E_N(f;\eta)$ to the one of classifying the {\em good} subsets
$\{\eta_j\}_{j\geq1}$ (ie the ones for which
$E_N(f;\eta)$ converges for every $f$) and the others
(the ones for which there is --at least-- a function
$f$ such that $E_N(f;\eta)$ does not converge).

On the other hand, since for any compact subset
$K\subset\C^2$,
\begin{eqnarray*}
\sup_{z\in K}
\left|
\sum_{k+l\geq N}a_{k,l}z_1^kz_2^l
\right|
& \xrightarrow[N\rightarrow\infty]{} &
0\,,
\end{eqnarray*}
the relation~(\ref{relation}) allows us
to reduce the problem of the convergence of $E_N(f;\eta)$
to the one of the formula $R_N(f;\eta)$. A preliminar
property about the Lagrange interpolation polynomials
(Section~\ref{preliminar}, Lemma~\ref{lagrangebis})
yields to
\begin{eqnarray}\nonumber
R_N(f;\eta)(z)
& = &
\sum_{p=0}^{N-1}
z_2^{N-1-p}
\prod_{j=1}^p(z_1-\eta_jz_2)
\;\times
\\\label{RNlagrange}
& &
\times\;
\Delta_{p,(\eta_p,\ldots,\eta_1)}
\left[\zeta\mapsto
\sum_{m\geq N}
\left(
\frac{z_2+\overline{\zeta}z_1}{1+|\zeta|^2}
\right)^{m-N+1}
\sum_{k+l=m}a_{k,l}\zeta^k
\right](\eta_{p+1})
\,,
\end{eqnarray}
where the function $\Delta_p$ is defined as follows:
\begin{eqnarray}\label{defDelta}
\Delta_{0,\emptyset}(g)(\eta_1)
& := &
g(\eta_1),\,
\\\nonumber
\Delta_{p+1,(\eta_{p+1},\eta_p\ldots,\eta_1)}
(g)(\eta_{p+2})
& := &
\dfrac{\Delta_{p,(\eta_{p},\ldots,\eta_1)}
(g)(\eta_{p+2})
-
\Delta_{p,(\eta_p,\ldots,\eta_1)}
(g)(\eta_{p+1})
}
{\eta_{p+2}-\eta_{p+1}}
\,.
\end{eqnarray}
$\Delta_p$ means the discrete derivative of $g$.
A control of these $\Delta_p$ will allow us to get
a control of $R_N(f;\eta)$. For example, the
$\Delta_p$ of a holomorphic function looks like
its usual iterative derivative
(Section~\ref{preliminar}, Lemma~\ref{Deltanal}).
Nevertheless, a non holomorphic function,
even $C^{\infty}$, can
have a $\Delta_p$ without any control
(Section~\ref{ctrex}, Lemma~\ref{constructctrex}).
It follows that, in the case of
$R_N(f;\eta)$, the non-validity of the convergence
for every $f$ is
linked to the fact that, in its above
expression~(\ref{RNlagrange}),
there is the $\Delta_p$ of a
non holomorphic function with respect to
$\zeta\in\C$ (although it is $C^{\infty}$, bounded
on $\C$ and uniformly converges to $0$ as
$N\rightarrow\infty$).
This should allow us to reduce the problem of the
control of $R_N(f;\eta)$ to the one
of the $\Delta_p$ of the function
$\overline{\zeta}$, that will be specified in the
following subsection.

\subsection{Results}

In this part we give the results of this paper. The first
gives an equivalent criterion about the convergence of
$E_N(f;\eta)$ for every $f$,
for the case when the subset
$\{\eta_j\}_{j\geq1}$ is bounded.

\begin{theorem}\label{equivbounded}

Let $\{\eta_j\}_{j\geq1}$ be bounded.
Then the interpolation formula
$E_N(f;\eta)$ converges to $f$
uniformly on any compact
$K\subset\C^2$ and for all
$f\in\mathcal{O}\left(\C^2\right)$,
if and only if
$\{\eta_j\}_{j\geq1}$ satisfies the following condition: 
$\exists\,R_{\eta}\geq1$ (that only depends on
$\{\eta_j\}_{j\geq1}$), 
$\forall\,p,q\geq0$,
\begin{eqnarray}\label{criter}
\left|
\Delta_{p,(\eta_p,\ldots,\eta_1)}
\left[
\left(
\frac{\overline{\zeta}}{1+|\zeta|^2}
\right)^q
\right]
\left(\eta_{p+1}\right)
\right|
& \leq &
R_{\eta}^{p+q}
\,.
\end{eqnarray}

\end{theorem}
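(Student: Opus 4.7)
By relation~(\ref{relation}) and since $\sum_{k+l\geq N}a_{k,l}z_1^kz_2^l$ tends uniformly to $0$ on every compact of $\C^2$, the convergence $E_N(f;\eta)\to f$ is equivalent to $R_N(f;\eta)\to 0$. I will handle both directions through the Lagrange expression~(\ref{RNlagrange}).

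For \emph{necessity}, each map $L_{N}:f\mapsto R_N(f;\eta)(z_0)$ is a continuous linear form on the Fr\'echet space $\mathcal{O}(\C^2)$, being a finite combination of Taylor coefficients of $f$. The hypothesis gives $L_N(f)\to 0$ for every $f$, so the Banach--Steinhaus theorem yields a compact $K\subset\C^2$ and a constant $C$ with $|R_N(f;\eta)(z_0)|\leq C\|f\|_K$ uniformly in $N$ and $f$. Take $z_0=(1,0)$: every term with $p<N-1$ in~(\ref{RNlagrange}) vanishes because of the factor $z_2^{N-1-p}$, and
\begin{eqnarray*}
R_N(f;\eta)(1,0)
& = &
\Delta_{N-1,(\eta_{N-1},\ldots,\eta_1)}\left[\zeta\mapsto\sum_{m\geq N}\left(\frac{\overline{\zeta}}{1+|\zeta|^2}\right)^{m-N+1}\sum_{k+l=m}a_{k,l}\zeta^k\right](\eta_N).
\end{eqnarray*}
Testing with $f(z)=z_2^{p+q}$ (so that only $a_{0,p+q}=1$) and $N=p+1$ collapses this to $\Delta_{p,(\eta_p,\ldots,\eta_1)}[(\overline{\zeta}/(1+|\zeta|^2))^q](\eta_{p+1})$, which is therefore bounded by $Cr_2^{p+q}$ whenever $K\subset\{|z_2|\leq r_2\}$. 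Absorbing $C$ into a larger base yields~(\ref{criter}).

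For \emph{sufficiency}, set $M:=\sup_j|\eta_j|$ and fix $\rho>0$. On $\{|z_1|,|z_2|\leq\rho\}$ the outer factor in~(\ref{RNlagrange}) is bounded by $\rho^{N-1}(1+M)^p$. I rewrite the inner integrand as a linear combination of monomials $\zeta^a(\overline{\zeta}/(1+|\zeta|^2))^b$ via two nested binomial expansions: first $(z_2+\overline{\zeta}z_1)^{m-N+1}=\sum_s\binom{m-N+1}{s}z_1^sz_2^{m-N+1-s}\overline{\zeta}^s$, and then the identity $1/(1+|\zeta|^2)=1-\zeta\cdot\overline{\zeta}/(1+|\zeta|^2)$ raised to the appropriate power converts each $\overline{\zeta}^s/(1+|\zeta|^2)^{m-N+1}$ into an integer combination of $\zeta^t(\overline{\zeta}/(1+|\zeta|^2))^{s+t}$. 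I then apply the Leibniz rule for divided differences with $(\overline{\zeta}/(1+|\zeta|^2))^{s+t}$ placed as the \emph{left} factor, so that the hypothesis~(\ref{criter}) directly bounds its divided difference on the node subset $\{\eta_1,\ldots,\eta_{j+1}\}$; the right factor is the pure polynomial $\zeta^{k+t}$ (incorporating the $\zeta^k$ from $a_{k,l}$), whose divided differences on the remaining nodes $\{\eta_{j+1},\ldots,\eta_{p+1}\}$ are controlled by the classical identity $[x_0,\ldots,x_{p-j}](\zeta^n)=h_{n-(p-j)}(x_0,\ldots,x_{p-j})$ together with $|h_{n-(p-j)}|\leq\binom{n}{p-j}M^{n-(p-j)}$. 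Collecting the combinatorial sums bounds each inner divided difference by $R_\eta^p\,[\rho(1+R_\eta(1+M'))]^{m-N+1}\,(M')^k$, with $M':=2\max(M,1)$; summing over $p$ then majorises $R_N(f;\eta)$ on the compact by $C\,N^2 A^N\sum_{k+l\geq N}|a_{k,l}|B^{k+l}$ for constants $A,B$ depending only on $\rho,M,R_\eta$. Since $f$ is entire, Cauchy estimates give $|a_{k,l}|\leq M_r\,r^{-(k+l)}$ for arbitrarily large $r$, and choosing $r>2AB$ produces exponential decay in $N$, proving $R_N(f;\eta)\to 0$ uniformly on $\{|z_1|,|z_2|\leq\rho\}$.

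The main obstacle lies in the sufficiency direction: one must orient the Leibniz rule so that the ``bad'' factor $(\overline{\zeta}/(1+|\zeta|^2))^{s+t}$ lands on a node subset of the form $\{\eta_1,\ldots,\eta_{j+1}\}$ matching the hypothesis~(\ref{criter}), and then verify that the combinatorial weights $\binom{m-N+1}{s}\binom{m-N+1-s}{t}\binom{k+t}{p-j}$ produced by the iterated binomial and Leibniz expansions collapse into a single geometric rate in $\rho$, $M$ and $R_\eta$ which the Cauchy estimate on $|a_{k,l}|$ can absorb. The necessity direction is, by contrast, a rather direct application of Banach--Steinhaus combined with the diagonalisation of $R_N(f;\eta)(1,0)$ on the test monomials $z_2^{p+q}$.
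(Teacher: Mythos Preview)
Your argument is correct in both directions, but both differ from the paper's route, and it is worth recording how.

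\textbf{Necessity.} The paper extracts the same quantity you do, but via the Taylor coefficient $\frac{1}{k_1!}\partial_{z_1}^{k_1}R_N(f;\eta)(0)$ and Cauchy estimates on the bidisc rather than direct evaluation at $(1,0)$; these give the same formula. The real difference is the next step: the paper tests with $f(z)=h(z_2)$ for arbitrary entire $h$, obtains a bound depending on $h$, and then runs a somewhat lengthy contradiction argument (constructing a sequence $(\varepsilon_n)$ so that the associated $h_\varepsilon$ violates the bound). Your use of Banach--Steinhaus on the Fr\'echet space $\mathcal{O}(\C^2)$ replaces that entire argument: equicontinuity of the $L_N$ produces a single seminorm bound $|R_N(f;\eta)(1,0)|\leq C\|f\|_K$, and the monomials $z_2^{p+q}$ read off~(\ref{criter}) immediately. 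This is genuinely shorter and more conceptual.

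\textbf{Sufficiency.} The paper does not expand $1/(1+|\zeta|^2)^{q-s}$ as you do. Instead it first proves, by a separate induction on $p+q-s$ (its Lemma ``criter+''), that~(\ref{criter}) upgrades to a bound on $\Delta_p[\overline{\zeta}^s/(1+|\zeta|^2)^q]$ for all $0\leq s\leq q$; the induction step uses the relation $\overline{\zeta}^{s+1}\zeta/(1+|\zeta|^2)^q=\overline{\zeta}^s/(1+|\zeta|^2)^{q-1}-\overline{\zeta}^s/(1+|\zeta|^2)^q$ together with the Leibniz rule. After that, only one binomial expansion and one Leibniz split are needed. Your algebraic identity $1/(1+|\zeta|^2)=1-\zeta\cdot\overline{\zeta}/(1+|\zeta|^2)$, raised to the power $q-s$, gives the finite expansion $\overline{\zeta}^s/(1+|\zeta|^2)^q=\sum_t(-1)^t\binom{q-s}{t}\zeta^t(\overline{\zeta}/(1+|\zeta|^2))^{s+t}$ and thereby reduces directly to the form covered by~(\ref{criter}), at the cost of one extra layer of summation. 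This is a legitimate alternative to the paper's inductive lemma; the combinatorics you outline do close (the sums over $s,t,j$ telescope into a single geometric factor of the type $[\rho(1+R_\eta(M+2))]^{m-N+1}$), and the Cauchy estimate with $r$ large enough absorbs everything. The paper's packaging is a bit cleaner because the intermediate lemma has a reusable statement, but your route avoids that induction entirely.
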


First, we see that the convergence of $E_N(f;\eta)$ is linked
to the control of the $\Delta_p$ of a non holomorphic function.
Next, this is an analytic criterion that only depends on the
configuration of the points $\eta_j,\,j\geq1$, and does not involve
any function $f\in\mathcal{O}\left(\C^2\right)$.
\bigskip

We also give an extension of this result for the case when
$\{\eta_j\}_{j\geq1}$ can be not bounded but is not dense in $\C$. Let be
$\eta_{\infty}\in\C\setminus\overline{\{\eta_j\}_{j\geq1}}$ and
set, for all $j\geq1$,
\begin{eqnarray}\label{deftheta}
\theta_j & := &
\frac{1+\overline{\eta_{\infty}}\eta_j}
{\eta_j-\eta_{\infty}}
\,,
\end{eqnarray}
Then the set $\{\theta_j\}_{j\geq1}$ is bounded
and an application of Theorem~\ref{equivbounded}
with the $\theta_j$ yields to the following more general result.

\begin{theorem}\label{equivnodense}

Let assume that $\{\eta_j\}_{j\geq1}$ is not dense
in $\C$. Let be any
$\eta_{\infty}\notin\overline{\{\eta_j\}_{j\geq1}}$
and the associate set
$\{\theta_j\}_{j\geq1}$.
Then for all 
$f\in\mathcal{O}\left(\C^2\right)$
the interpolation formula
$E_N(f;\eta)$ converges to $f$
uniformly on any compact
$K\subset\C^2$,
if and only if
$\{\theta_j\}_{j\geq1}$ satisfies the condition~(\ref{criter})
of Theorem~\ref{equivbounded}, ie 
$\exists\,R_{\theta}\geq1$, 
$\forall\,p,q\geq0$,
\begin{eqnarray*}
\left|
\Delta_{p,(\theta_p,\ldots,\theta_1)}
\left[
\left(
\frac{\overline{\zeta}}{1+|\zeta|^2}
\right)^q
\right]
\left(\theta_{p+1}\right)
\right|
& \leq &
R_{\theta}^{p+q}
\,.
\end{eqnarray*}

\end{theorem}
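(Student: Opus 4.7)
The plan is to reduce Theorem~\ref{equivnodense} to Theorem~\ref{equivbounded} by implementing the M\"obius transformation $\eta \mapsto \theta = (1 + \overline{\eta_\infty}\eta)/(\eta - \eta_\infty)$ as a linear change of coordinates on $\C^2$. Set
\begin{equation*}
A \;:=\; \begin{pmatrix} \overline{\eta_\infty} & 1 \\ 1 & -\eta_\infty \end{pmatrix}, \qquad U \;:=\; \frac{1}{\sqrt{1+|\eta_\infty|^2}}\,A\,;
\end{equation*}
then $\det A = -(1+|\eta_\infty|^2)\neq 0$, $U$ is unitary, and the change of variables $w = Uz$ maps the line $\{z_1 = \eta_j z_2\}$ bijectively onto $\{w_1 = \theta_j w_2\}$, with the explicit relation
\begin{equation*}
w_1 - \theta_j w_2 \;=\; -\,\frac{\theta_j - \overline{\eta_\infty}}{\sqrt{1+|\eta_\infty|^2}}\,(z_1 - \eta_j z_2)\,.
\end{equation*}

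As a preliminary I would check the boundedness of $\{\theta_j\}_{j\geq 1}$: the identity
\begin{equation*}
1 + |\theta|^2 \;=\; \frac{(1+|\eta|^2)(1+|\eta_\infty|^2)}{|\eta-\eta_\infty|^2}\,,
\end{equation*}
obtained by direct expansion, combined with the lower bound $|\eta_j - \eta_\infty|\geq\delta>0$ provided by $\eta_\infty\notin\overline{\{\eta_j\}_{j\geq1}}$, shows that for large $|\eta_j|$ the right-hand side tends to $1+|\eta_\infty|^2$ while on the bounded part of $\{\eta_j\}$ it stays controlled by $\delta^{-2}$.

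For any $f\in\mathcal{O}(\C^2)$ I then set $\tilde f(w) := f(U^*w)$, which is again entire, and observe that $f\mapsto\tilde f$ is a bijection of $\mathcal{O}(\C^2)$. The heart of the argument is the pair of covariance identities
\begin{equation*}
E_N(\tilde f;\theta)(Uz) \;=\; E_N(f;\eta)(z)\,, \qquad R_N(\tilde f;\theta)(Uz) \;=\; R_N(f;\eta)(z)\,.
\end{equation*}
The cleanest route is to pull back through $U$ the integral representation recalled in the introduction: the unit sphere $\mathbb{S}_2$, the pairing $\langle\overline\zeta,z\rangle$ and the kernel $(1-\langle\overline\zeta,z\rangle)^{-1}$ are all preserved; the products $\prod(\zeta_1-\eta_j\zeta_2)$ and $\prod(z_1-\eta_j z_2)$ differ from their counterparts in the new variables $\xi=U\zeta$, $w=Uz$ by one and the same nonzero multiplicative constant which therefore cancels out in the quotient; and $\omega(\zeta)$ only picks up the unimodular factor $\det U$. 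A purely algebraic verification is also possible, using that along the line $w=(\theta_q v,v)$ one has $U^*w = (\eta_q v',v')$ with $v' = (\theta_q-\overline{\eta_\infty})v/\sqrt{1+|\eta_\infty|^2}$, so that $\tilde f(\theta_q v,v) = f(\eta_q v',v')$ and each derivative $\partial_v^m|_{v=0}$ picks up a clean power of $(\theta_q-\overline{\eta_\infty})/\sqrt{1+|\eta_\infty|^2}$ that matches the other transformed factors in~(\ref{defEN}) and~(\ref{defRN}).

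Once the covariance identities are in hand the conclusion is immediate: since $U$ is a homeomorphism of $\C^2$, the uniform convergence $E_N(f;\eta)\to f$ on every compact of $\C^2$ for every $f\in\mathcal{O}(\C^2)$ is equivalent to the same statement with $\tilde f$ and $\theta$, and Theorem~\ref{equivbounded} applied to the bounded set $\{\theta_j\}_{j\geq1}$ then yields exactly the criterion stated in Theorem~\ref{equivnodense}. The main obstacle is certainly the covariance step: the integral-representation route is conceptually clean but requires careful bookkeeping of how each kernel, differential form and integration locus transforms under $U$, whereas the direct substitution into~(\ref{defEN}) and~(\ref{defRN}) is more routine but computationally longer.
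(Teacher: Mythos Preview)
Your proposal is correct and follows essentially the same route as the paper: introduce the unitary $U=U_{\eta_\infty}$, check that $\{\theta_j\}$ is bounded, establish the covariance of the interpolation/remainder under $z\mapsto Uz$ and $f\mapsto f\circ U^{*}$, and then invoke Theorem~\ref{equivbounded}. The paper carries out the covariance step exactly via your ``integral-representation route'' (this is Lemma~\ref{lemmextensio}, which pulls back the principal-value integral of Corollary~\ref{equivcv0} through $U$ and tracks how $\omega,\omega',\prod(\zeta_1-\eta_j\zeta_2)$ and $\langle\overline\zeta,z\rangle$ transform). One small refinement in your write-up: you state the exact identities $E_N(\tilde f;\theta)(Uz)=E_N(f;\eta)(z)$ and $R_N(\tilde f;\theta)(Uz)=R_N(f;\eta)(z)$, whereas the paper only argues that the two $R_N$'s differ by the two Taylor tails and that both tails tend to~$0$; your stronger claim is in fact true because a linear change of variables preserves homogeneous degree, so $\sum_{k+l\ge N}a_{k,l}(\tilde f)(Uz)_1^k(Uz)_2^l=\sum_{k+l\ge N}a_{k,l}(f)z_1^kz_2^l$, and this makes the conclusion slightly cleaner.
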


\bigskip

Now we will give another condition for the set
$\{\eta_j\}_{j\geq1}$ to make converge any
$E_N(f;\eta)$, that is more natural to be expressed.
First, we give the following definition.

\begin{definition}\label{anal}

$\{\eta_j\}_{j\geq1}$ is
{\em real-analytically interpolated}
if, for all
$\zeta\in\overline{\{\eta_j\}_{j\geq1}}$,
there exist
a neighborhood $V$ of
$\zeta$ and
$g\in\mathcal{O}(V)$ such that,
$\forall\,\eta_j\in V$,
\begin{eqnarray*}
\overline{\eta_j}
& = &
g(\eta_j)
\,.
\end{eqnarray*}

\end{definition}

Although the conjugate function
$\overline{\zeta}$ is not holomorphic,
this condition means that the closure of
$\{\eta_j\}_{j\geq1}$ can be embedded in some subset on which
$\overline{\zeta}$ locally coincides with a certain holomorphic function.
It follows that any $\eta_j$ is locally in the zero set of some
real-analytic function: 
$(x,y)\in V'\subset\R^2\mapsto x-iy-g(x+iy)$.

Then we can give the following result that is a sufficient
condition for the set $\{\eta_j\}_{j\geq1}$
to make converge $E_N(f;\eta)$ for every function $f$.

\begin{theorem}\label{superbe}

If $\{\eta_j\}_{j\geq1}$ is
real-analytically interpolated,
then for all
$f\in\mathcal{O}\left(\C^2\right)$,
the interpolation formula
$E_N(f;\eta)$ converges to $f$
uniformly on any $K\subset\C^2$.

\end{theorem}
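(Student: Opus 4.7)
The plan is to apply Theorem~\ref{equivnodense} after transferring the hypothesis to a bounded auxiliary set and then verifying the divided-difference criterion on that set via the local holomorphic extensions of $\overline{\zeta}$.

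\emph{Reduction to a bounded, real-analytically interpolated set.} First I would observe that a real-analytically interpolated $\{\eta_j\}_{j\geq 1}$ cannot be dense in $\C$: a local identity $\overline{\zeta}=g(\zeta)$ holding on a dense subset of some open $V$ would, by continuity, extend to all of $V$, contradicting the non-holomorphy of $\overline{\zeta}$. Fix then $\eta_\infty\notin\overline{\{\eta_j\}_{j\geq 1}}$ and form $\{\theta_j\}_{j\geq 1}$ as in~(\ref{deftheta}). The elementary identity
\begin{eqnarray*}
\theta_j-\overline{\eta_\infty} & = & \frac{1+|\eta_\infty|^2}{\eta_j-\eta_\infty}
\end{eqnarray*}
shows that $\{\theta_j\}_{j\geq 1}$ is bounded. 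To see that $\{\theta_j\}_{j\geq 1}$ is itself real-analytically interpolated, note that the M\"obius map $T(\zeta)=(1+\overline{\eta_\infty}\zeta)/(\zeta-\eta_\infty)$ is a local biholomorphism whose inverse is $T^{-1}(\theta)=(1+\eta_\infty\theta)/(\theta-\overline{\eta_\infty})$; if $\overline{\eta_j}=g(\eta_j)$ holds on a neighborhood of an accumulation point of $\{\eta_j\}_{j\geq 1}$, then
\begin{eqnarray*}
\overline{\theta_j} & = & \frac{1+\eta_\infty\,g(\eta_j)}{g(\eta_j)-\overline{\eta_\infty}}
\end{eqnarray*}
is holomorphic in $\eta_j$, hence in $\theta_j$ through the holomorphic inverse $T^{-1}$.

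\emph{Setting up the verification of~(\ref{criter}) for $\{\theta_j\}$.} By Theorem~\ref{equivnodense} it suffices to bound the divided differences of $\left(\overline{\zeta}/(1+|\zeta|^2)\right)^q$ at the points $\theta_j$. I would cover the compact closure $K:=\overline{\{\theta_j\}_{j\geq 1}}$ by finitely many open disks $V_1,\ldots,V_M$ carrying holomorphic extensions $h_l:V_l\to\C$ of $\overline{\zeta}|_{K\cap V_l}$. After shrinking, I can arrange that on every overlap $V_l\cap V_{l'}$ containing a non-isolated point of $K$ the identity theorem forces $h_l=h_{l'}$, so the $h_l$ assemble into a single holomorphic $h$ on each connected component of $V:=\bigcup_l V_l$ meeting $K$. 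Since $1+|\theta_j|^2>0$, a further shrinking avoids the zero set of $1+\zeta\,h(\zeta)$, so that
\begin{eqnarray*}
G_q(\zeta) & := & \left(\frac{h(\zeta)}{1+\zeta\,h(\zeta)}\right)^q
\end{eqnarray*}
is holomorphic on $V$, agrees with $\left(\overline{\zeta}/(1+|\zeta|^2)\right)^q$ on $\{\theta_j\}_{j\geq 1}$, and satisfies a uniform bound $\|G_q\|_V\leq C^q$ independent of the component.

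\emph{The main obstacle.} The principal difficulty is that the evaluation points $\theta_1,\ldots,\theta_{p+1}$ need not all lie in a single connected component of $V$. On points lying in a common component, Lemma~\ref{Deltanal} applied to $G_q$ yields the expected exponential bound $(C')^{p+q}$ for the divided difference. To handle transitions between components I would iterate the defining recursion~(\ref{defDelta}) and exploit the positive separation $\delta>0$ of distinct components of $V$ meeting $K$: each cross-component step contributes a bounded factor $1/\delta$. Combining the intra-component estimates from Lemma~\ref{Deltanal} with these controlled inter-component crossings yields
\begin{eqnarray*}
\left|\Delta_{p,(\theta_p,\ldots,\theta_1)}\left[\left(\frac{\overline{\zeta}}{1+|\zeta|^2}\right)^q\right](\theta_{p+1})\right| & \leq & R_\theta^{p+q}
\end{eqnarray*}
for a suitable $R_\theta\geq 1$, which is precisely~(\ref{criter}) and concludes the proof through Theorem~\ref{equivnodense}.
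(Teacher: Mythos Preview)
Your proposal is correct and shares its core technique with the paper, though the packaging differs. The paper also reduces to a bounded, real-analytically interpolated $\{\theta_j\}$ (Lemmas~\ref{analnodense} and~\ref{equivanal}, together with Lemma~\ref{lemmextensio}), but then does \emph{not} verify criterion~(\ref{criter}) and invoke Theorem~\ref{equivnodense}; instead it bounds $R_N(f;\theta)$ directly, replacing the non-holomorphic kernel $r_N(\zeta,z)$ of~(\ref{rN}) by a locally holomorphic $\widetilde r_{N,\zeta_0}(\zeta,z)$ and then running exactly the same Lebesgue-number induction you describe (all points within $\varepsilon_0$ $\Rightarrow$ one ball $\Rightarrow$ Lemma~\ref{Deltanal}; otherwise a far pair $\Rightarrow$ recursion~(\ref{defDelta}) and Lemma~\ref{Deltapermut}, picking up a $1/\varepsilon_0$ factor). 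Your route applies that same mechanism to $G_q$ rather than to $r_N$, which is arguably cleaner since it isolates the geometric content in criterion~(\ref{criter}) and reuses the already-proved Theorem~\ref{equivnodense}; the paper's direct route avoids the mild bookkeeping of tracking the $q$-dependence in $\|G_q\|\le C^q$.

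One inessential imprecision: the local extensions $h_l$ need not glue to a single holomorphic $h$ on a connected component of $V$, since an overlap $V_l\cap V_{l'}$ may contain only finitely many $\theta_j$ (where the identity theorem says nothing). You do not actually need any gluing---the Lebesgue-number argument only requires that any subset of $\overline{\{\theta_j\}}$ of diameter $<\varepsilon_0$ lie in a single $V_l$, which is precisely what your cross-component induction with the separation $\delta$ exploits. Also note that when $\{\eta_j\}$ is unbounded the point $\overline{\eta_\infty}$ is an accumulation point of $\{\theta_j\}$ and the relevant local extension near it must be built from $g_\infty$ at $\infty$ (the paper does this explicitly in Lemma~\ref{equivanal}); your sketch glosses over this but it is routine.
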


The condition
for $\{\eta_j\}_{j\geq1}$ to be real-analytically interpolated
is also linked to the uniform control of its
$\Delta_p\left(\overline{\zeta}\right)$
(Section~\ref{proofs}, Proposition~\ref{unifDelta}).
This is an analogous condition to~(\ref{criter}), but with
a uniform estimation on the subsequences
$(\eta_{j_k})_{k\geq1}$ and the same constant $R_{\eta}$
(Section~\ref{proofs}, Definition~\ref{defunifDelta}).

On the other hand, we
understand why the convergence of $E_N(f;\eta)$
is always true for subsets like lines. For example,
the subset $\R$ is the set of $z\in\C$ such that
$\overline{z}=z$. In the same way,
$i\R=\{\overline{z}=-z\}$, and more generally, any real line $D$
of $\C$ can be written as
$\{z\in\C,\;a\Re\,z+b\Im\,z+c=0\}$,
with $(a,b)\in\R^2\setminus\{(0,0)\}$,
then
\begin{eqnarray*}
D 
& = & 
\left\{
a\frac{z+\overline{z}}{2}+b\frac{z-\overline{z}}{2i}+c=0
\right\}
\;=\;
\left\{
\overline{z}=-\frac{\frac{a-ib}{2}\,z+c}{(a+ib)/2}
\right\}
\,.
\end{eqnarray*}
In the same way, any circle $C$ of $\C$ can be written as
$\{z\in\C,\;|z-z_0|=r\}$, with $z_0\in\C,\,r>0$, then
\begin{eqnarray*}
C & = &
\left\{
(z-z_0)\overline{(z-z_0)}=r^2
\right\}
\;=\;
\left\{
\overline{z}=\overline{z_0}+\frac{r^2}{z-z_0}
\right\}
\,.
\end{eqnarray*}

Another fact is that the real-analytical condition for
$\{\eta_j\}_{j\geq1}$ can also be reformulated as
a real-analytical dependence of the family
$\left(\{z_1-\eta_jz_2=0\}\right)_{j\geq1}$,
formulation that can be extended in the case of any
family of analytic subvarieties
$\{Z_j\}_{j\geq1}$ of any domain
$\Omega\subset\C^n$.

This also yields to another natural question: is this condition necessary?
We think that it should not be the case
since the formula $E_N(f;\eta)$ still converges
for a lots of subsets $\{\eta_j\}_{j\geq1}$ that are in
greater number than the ones that are
real-analytically interpolated.

Nevertheless, we will see that this condition
cannot be completely forgotten. Indeed, we will
construct in Section~\ref{ctrex}
(Proposition~\ref{ctrexample}) a bounded sequence
$(\eta_j)_{j\geq1}\subset\R\cup i\R$ 
that converges to $0$
without staying in $\R$ or $i\R$,
and does not satisfy the
condition~(\ref{criter}) of
Theorem~\ref{equivbounded} (then
$E_N(f;\eta)$ cannot converge for every
function $f$).
\bigskip

Another fact that we want to specify is that the
formula $E_N(f;\eta)$ is a canonical interpolation formula
in the meaning that it is essentially the most simple interpolation
formula that fixes the polynomials of degree at
most $N-1$ (Section~\ref{preliminar}, Lemma~\ref{lemmEN}), ie
\begin{eqnarray}
\forall\,P\in\C_{N-1}[z_1,z_2],\;
E_N(P;\eta)
& \equiv &
P
\,.
\end{eqnarray}
\bigskip

This problem of explicit reconstruction is also motivated by possible
applications in mathematical economics and medical imaging.
We want to reconstruct any given function
$F$ with compact support $K\subset\R^n$ from the knowledge of
its Radon transforms
$(RF)\left(\theta_j,s\right)$,
$\left(\theta_j,s\right)\in\mathbb{S}^{n-1}\times\R$,
on a finite number of directions
$j=1,\ldots,N$ (see~\cite{hensha1} and~\cite{hensha2}). 

\bigskip

I would like to thank G. Henkin for having introduced me this
interesting problem and J. Ortega-Cerda for all the
rewarding ideas and discussions about it.

\bigskip

\section{Some preliminar results}\label{preliminar}

\subsection{Equality of residues/principal values}

We remind from Introduction (\cite{irigoyen3}, Proposition~1) that,
for all 
$f\in\mathcal{O}\left(\overline{\mathbb{B}_2}\right)$,
one has
\begin{eqnarray}\label{relation1}\nonumber
f(z)
& = &
\lim_{\varepsilon\rightarrow0}
\frac{1}{(2i\pi)^2}
\int_{
\left|
\prod_{j=1}^N
(\zeta_1-\eta_j\zeta_2)
\right|
=\varepsilon}
\frac{
f(\zeta)
\det{\left(\overline{\zeta},P_N(\zeta,z)\right)}
\,\omega(\zeta)
}
{\prod_{j=1}^N(\zeta_1-\eta_j\zeta_2)
(1-<\overline{\zeta},z>)}
\\
& &
-\;
\lim_{\varepsilon\rightarrow0}
\frac{\prod_{j=1}^N(z_1-\eta_j z_2)}{(2i\pi)^2}
\int_{
\left|
\prod_{j=1}^N
(\zeta_1-\eta_j\zeta_2)
\right|
>\varepsilon}
\frac{f(\zeta)\,\omega'(\overline{\zeta})\wedge\omega^(\zeta)}
{\prod_{j=1}^N(z_1-\eta_j z_2)
(1-<\overline{\zeta},z>)^2}
\,,
\end{eqnarray}
the integration being on the unit sphere
$\mathbb{S}_2=\{\zeta\in\C^2,\;
|\zeta_1|^2+|\zeta_2|^2=1\}$,
with
\begin{eqnarray*}
\begin{cases}
\omega'(\zeta)=
\zeta_1d\zeta_2-\zeta_2d\zeta_1\,,
\\
\omega(\zeta)=d\zeta_1\wedge d\zeta_2\,,
\end{cases}
\end{eqnarray*}
and
$P_N(\zeta,z)\in
\left(
\mathcal{O}\left(\C^2\right)
\right)^2$
is such that, for all
$(\zeta,z)\in\C^2\times\C^2$, 
\begin{eqnarray*}
<P_N(\zeta,z),\zeta-z>
& = &
\prod_{j=1}^N
(\zeta_1-\eta_j\zeta_2)
-
\prod_{j=1}^N
(z_1-\eta_j z_2)
\end{eqnarray*}
(see \cite{irigoyen3}, Lemma 7 for an explicit expression
of $P_N$).

We also have the analogous following relation
(\cite{irigoyen3}, Proposition 2)
\begin{eqnarray}\label{relation2}
f(z)
& = &
\lim_{\varepsilon\rightarrow0}
\frac{1}{(2i\pi)^2}
\int_{
\partial\Sigma_{\varepsilon}
}
\frac{
f(\zeta)
\det{\left(\overline{\zeta},P_N(\zeta,z)\right)}
\,\omega(\zeta)
}
{\prod_{j=1}^N(\zeta_1-\eta_j\zeta_2)
(1-<\overline{\zeta},z>)}
\\\nonumber
& &
-\;
\lim_{\varepsilon\rightarrow0}
\frac{\prod_{j=1}^N(z_1-\eta_j z_2)}{(2i\pi)^2}
\int_{\Sigma_{\varepsilon}}
\frac{f(\zeta)\,\omega'(\overline{\zeta})\wedge\omega(\zeta)}
{\prod_{j=1}^N(z_1-\eta_j z_2)
(1-<\overline{\zeta},z>)^2}
\,,
\end{eqnarray}
where
\begin{eqnarray*}
\Sigma_{\varepsilon}
& := &
\bigcup_{j=0}^{N}
\left\{\zeta\in\mathbb{S}_2,\;
\alpha_j+\varepsilon<|\zeta_1|<\alpha_{j+1}-\varepsilon
\right\}
\end{eqnarray*}
and for all
$p=1,\ldots,N$
\begin{eqnarray}\label{defalpha}
\alpha_p & := &
\frac{|\eta_p|}{\sqrt{1+|\eta_p|^2}}
\,,
\end{eqnarray}
with
$\alpha_1\leq\alpha_2\leq\cdots\leq\alpha_N$ (that one can
assume without loss of generality) and 
the convention that
$\alpha_0=0$ and $\alpha_{N+1}=1$.
This allows us to explicit both above integrals
and get the following relation: for all
$f\in\mathcal{O}\left(\overline{\mathbb{B}_2}\right)$ and
$z\in\mathbb{B}_2$,
\begin{eqnarray}\label{relationexp}
f(z)
& = &
E_N(f;\eta)(z)
-R_N(f;\eta)(z)
+\sum_{k+l\geq N}
a_{k,l}z_1^kz_2^l
\,,
\end{eqnarray}
where
\begin{eqnarray}\label{dse}
f(z) & = &
\sum_{k,l\geq0}
a_{k,l}z_1^kz_2^l
\,,
\end{eqnarray}
is the Taylor expansion of $f$
(that absolutely converges on
$\mathbb{B}_2$),
\begin{eqnarray}\label{EN}\nonumber
E_N(f;\eta)(z)
& = &
\sum_{p=1}^N
\prod_{j=p+1}^N
(z_1-\eta_j z_2)
\sum_{q=p}^N
\frac{1+\eta_p\overline{\eta_q}}{1+|\eta_q|^2}
\frac{1}
{\prod_{j=p,j\neq q}^N(\eta_q-\eta_j)}
\times
\\
& &
\;\;\;\;\;\;\;\;\;\;\;\;\;\;\;\;
\times
\sum_{m\geq N-p}
\left(
\frac{z_2+\overline{\eta_q}z_1}{1+|\eta_q|^2}
\right)^{m-N+p}
\frac{1}{m!}
\frac{\partial^m}{\partial v^m}|_{v=0}
[f(\eta_qv,v)]
\\\nonumber
& = &
\sum_{p=1}^N
\prod_{j=p+1}^N
(z_1-\eta_j z_2)
\sum_{q=p}^N
\frac{1+\eta_p\overline{\eta_q}}{1+|\eta_q|^2}
\frac{1}
{\prod_{j=p,j\neq q}^N(\eta_q-\eta_j)}
\times
\\\nonumber
& &
\;\;\;\;\;\;\;\;\;\;\;\;\;\;\;\;\;\;\;\;\;\;\;\;\;\;\;\;\;\;\;\;
\times
\sum_{k+l\geq N-p}
a_{k,l}\eta_p^k
\left(
\frac{z_2+\overline{\eta_q}z_1}{1+|\eta_q|^2}
\right)^{k+l-N+p}
\,,
\end{eqnarray}
and
\begin{eqnarray}\label{defRN1}\nonumber
R_N(f;\eta)(z)
& = &
\sum_{p=1}^N
\prod_{j=1,j\neq p}^N\frac{z_1-\eta_jz_2}{\eta_p-\eta_j}
\sum_{m\geq N}
\left(
\frac{z_2+\overline{\eta_p}z_1}{1+|\eta_p|^2}
\right)^{m-N+1}
\times
\\
& &
\;\;\;\;\;\;\;\;\;\;\;\;\;\;\;\;\;\;\;\;\;\;\;\;\;\;\;\;\;\;\;\;\;\;\;
\;\;\;\;\;\;\;\;\;\;\;\;\;\;\
\times
\;
\frac{1}{m!}
\frac{\partial^m}{\partial v^m}|_{v=0}
[f(\eta_pv,v)]
\\\nonumber
\\\nonumber
& = &
\sum_{p=1}^N
\prod_{j=1,j\neq p}^N\frac{z_1-\eta_jz_2}{\eta_p-\eta_j}
\;
\sum_{k+l\geq N}a_{k,l}\eta_p^k
\left(
\frac{z_2+\overline{\eta_p}z_1}{1+|\eta_p|^2}
\right)^{k+l-N+1}
\,.
\end{eqnarray}

This equality holds if
$f\in\mathcal{O}\left(\C^2\right)$. Since
by Cauchy-Schwarz
\begin{eqnarray*}
\left|
\frac{z_2+\overline{\eta_p}z_1}{1+|\eta_p|^2}
\right|
& \leq &
\frac{
\sqrt{|z_1|^2+|z_2|^2}\sqrt{1+|\eta_p|^2}
}{1+|\eta_p|^2}
\;\leq\;
\|z\|
\,,
\end{eqnarray*}
for all $z\in\C^2$ and $p=1,\ldots,N$,
$E_N(f;\eta)$ and $R_N(f;\eta)$ are still
absolutely convergent series then are still
well-defined
for $z\in\C^2$. Then by analytic extension,
(\ref{relationexp}) holds for all
$z\in\C^2$.

Now we will prove the following preliminar result.

\begin{lemma}\label{prelimformula}

For all $N\geq1$ and $z\in\mathbb{B}_2$,
\begin{eqnarray*}
\lim_{\varepsilon\rightarrow0}
\frac{1}{(2i\pi)^2}
\int_{
\left|
\prod_{j=1}^N
(\zeta_1-\eta_j\zeta_2)
\right|
=\varepsilon}
\frac{
f(\zeta)
\det{\left(\overline{\zeta},P_N(\zeta,z)\right)}
\,\omega(\zeta)
}
{\prod_{j=1}^N(\zeta_1-\eta_j\zeta_2)
(1-<\overline{\zeta},z>)}
& = &
\;\;\;\;\;\;\;\;\;\;\;\;\;\;\;\;\;\;\;\;
\end{eqnarray*}
\begin{eqnarray*}
\;\;\;\;\;\;\;\;\;\;\;\;\;\;\;\;\;\;\;\;\;\;\;\;\;\;\;
& = &
\lim_{\varepsilon\rightarrow0}
\frac{1}{(2i\pi)^2}
\int_{
\partial\Sigma_{\varepsilon}
}
\frac{
f(\zeta)
\det{\left(\overline{\zeta},P_N(\zeta,z)\right)}
\,\omega(\zeta)
}
{\prod_{j=1}^N(\zeta_1-\eta_j\zeta_2)
(1-<\overline{\zeta},z>)}
\end{eqnarray*}
and
\begin{eqnarray*}
\lim_{\varepsilon\rightarrow0}
\frac{\prod_{j=1}^N(z_1-\eta_j z_2)}{(2i\pi)^2}
\int_{
\left|
\prod_{j=1}^N
(\zeta_1-\eta_j\zeta_2)
\right|
>\varepsilon}
\frac{f(\zeta)\,\omega'(\overline{\zeta})\wedge\omega^(\zeta)}
{\prod_{j=1}^N(z_1-\eta_j z_2)
(1-<\overline{\zeta},z>)^2}
& = &
\end{eqnarray*}
\begin{eqnarray*}
\;\;\;\;\;\;\;\;\;\;\;\;\;\;\;\;\;\;\;\;\;
& = &
\lim_{\varepsilon\rightarrow0}
\frac{\prod_{j=1}^N(z_1-\eta_j z_2)}{(2i\pi)^2}
\int_{\Sigma_{\varepsilon}}
\frac{f(\zeta)\,\omega'(\overline{\zeta})\wedge\omega(\zeta)}
{\prod_{j=1}^N(z_1-\eta_j z_2)
(1-<\overline{\zeta},z>)^2}
\,.
\end{eqnarray*}

\end{lemma}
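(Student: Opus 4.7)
The lemma compares two ways of regularizing the same formally singular integrals, whose common singularity lies on the divisor $D := \{\prod_{j=1}^N(\zeta_1-\eta_j\zeta_2)=0\}\cap\mathbb{S}_2$, a disjoint union of great circles $C_1,\ldots,C_N$ with $C_p\subset\{|\zeta_1|=\alpha_p\}$. My plan is to treat the two assertions independently: the second compares integrals over $3$-dimensional regions and should follow from local integrability and dominated convergence; the first compares integrals over $2$-cycles and is essentially a Stokes/residue argument. In both cases $\Sigma_\varepsilon$ and $\{|\prod|>\varepsilon\}\cap\mathbb{S}_2$ also differ by thin spherical caps near $|\zeta_1|=\varepsilon$ and $|\zeta_1|=1-\varepsilon$ where the integrand is smooth; these contribute $O(\varepsilon)$ and vanish in the limit.

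For the second equality, I aim to show that the integrand is absolutely integrable on $\mathbb{S}_2$. Near a point of $C_p$ introduce the complex transversal coordinate $w:=\zeta_1-\eta_p\zeta_2$; the singular part is of the form $1/|w|$, which is $L^1_{\mathrm{loc}}$ on the transversal $2$-disk since the area element in polar coordinates $w=re^{i\theta}$ is $r\,dr\,d\theta$. The remaining factors $\prod_{j\neq p}(\zeta_1-\eta_j\zeta_2)$ are bounded away from $0$ near $C_p$, and $(1-\langle\bar\zeta,z\rangle)^2$ never vanishes on $\mathbb{S}_2$ when $z\in\mathbb{B}_2$, so the whole integrand is in $L^1(\mathbb{S}_2)$. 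Both $\mathbf{1}_{\{|\prod|>\varepsilon\}}$ and $\mathbf{1}_{\Sigma_\varepsilon}$ converge pointwise almost everywhere to $\mathbf{1}_{\mathbb{S}_2\setminus D}$ and are dominated by this $L^1$ function, so dominated convergence forces both limits to equal the same absolutely convergent integral over $\mathbb{S}_2$.

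For the first equality, I would localize near each $C_p$. On a small tubular neighborhood $U_p$ the other factors $\zeta_1-\eta_j\zeta_2$, $j\neq p$, are nonvanishing, so the integrand takes the form $\Phi_p(\zeta)\,\omega(\zeta)/(\zeta_1-\eta_p\zeta_2)$ with $\Phi_p$ smooth. Inside $U_p$ both cycles $\{|\prod|=\varepsilon\}$ and $\partial\Sigma_\varepsilon$ are $2$-tori winding once around $C_p$: the first is essentially $\{|\zeta_1-\eta_p\zeta_2|=\varepsilon/|c_p(\zeta)|\}$ for a holomorphic $c_p$ bounded below, and the second is the pair $\{|\zeta_1|=\alpha_p\pm\varepsilon\}$. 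These cobound a $3$-chain in $U_p\setminus C_p$. Since $\omega/(\zeta_1-\eta_p\zeta_2)$ is $d$-closed where defined, Stokes' theorem reduces the difference to the integral of $\bar\partial\Phi_p\wedge\omega/(\zeta_1-\eta_p\zeta_2)$, a bounded $(2,1)$-form over a region of volume $O(\varepsilon)$; hence the difference is $O(\varepsilon)$ and vanishes in the limit. The auxiliary tori at $|\zeta_1|=\varepsilon$ and $|\zeta_1|=1-\varepsilon$ are handled separately by observing that the integrand is smooth there and the torus area is $O(\varepsilon)$.

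The main obstacle is the geometric and homological bookkeeping of the Stokes step: verifying that the two $2$-tori really cobound the correct $3$-chain in $U_p\setminus C_p$ with compatible orientations, and that exterior differentiation on the sphere agrees with the ambient $\C^2$ computation. A cleaner route is to parameterize $U_p$ via the pair $(w,\zeta_2)$ (restricted to the sphere) and reduce both integrals, by the Cauchy integral formula in the transversal $w$-variable, to the same residue integral along $C_p$, thereby bypassing Stokes entirely in favor of a one-variable residue computation.
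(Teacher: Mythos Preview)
Your argument for the second equality is correct and in fact cleaner than the paper's: once one observes that $\omega'(\bar\zeta)\wedge\omega(\zeta)$ restricts to a bounded multiple of the Riemannian volume on $\mathbb{S}_2$ and that a simple pole along a circle in a $3$-manifold is locally $L^1$ (codimension two, order one), dominated convergence finishes immediately. The paper instead carries out a lengthy explicit computation, first replacing $\{|\prod|\le\varepsilon\}$ by the union of tubes $\{|\zeta_1-\eta_j\zeta_2|\le c_j\varepsilon\}$, then comparing each tube with the slab $\{\,||\zeta_1|-\alpha_j|\le b_j\varepsilon\}$ via an explicit Weierstrass-substitution estimate that produces an $O(|r|\log|r|)$ bound. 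Your route bypasses all of this.

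For the first equality, however, you miss a shortcut that the paper exploits and that makes the whole Stokes discussion unnecessary. Relations~(\ref{relation1}) and~(\ref{relation2}) are both exact identities of the form
\[
f(z)\;=\;(\text{residue part})\;-\;(\text{principal-value part}),
\]
one for each regularization. Subtracting them and invoking the second equality (already proved) forces the two residue parts to agree. No geometry, no cobounding, no orientations.

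Your Stokes route is not wrong in spirit, but as written it contains an error: the $3$-form $\bar\partial\Phi_p\wedge\omega/(\zeta_1-\eta_p\zeta_2)$ is \emph{not} bounded. The factor $\Phi_p$ depends on $\bar\zeta$ through $\det(\bar\zeta,P_N)$ and $\langle\bar\zeta,z\rangle$, so $\bar\partial\Phi_p$ does not vanish on $C_p$, and the form is $O(1/|w|)$ there. The conclusion is salvageable (the slab-minus-tube region carries $1/|w|$ in $L^1$, giving $o(1)$), and the cobounding you worry about does hold since the tube lies inside the slab and their difference avoids $C_p$; but this is exactly the kind of bookkeeping the paper's algebraic subtraction sidesteps. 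Your alternative of reducing to a one-variable Cauchy integral in $w$ also runs into the obstacle that the tori $\{|\zeta_1|=\alpha_p\pm\varepsilon\}$ are not fibered by circles $|w|=\text{const}$ in the transversal disk (in the $w$-plane they look like parallel lines, rotating with the base point on $C_p$), so the residue reduction is less direct than you suggest.
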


Before giving the proof of the lemma, we begin with
this first result.

\begin{lemma}

We define for all $p=1,\ldots,N$
\begin{eqnarray}
C_p & = &
\prod_{j=1,j\neq p}^N
\frac{|\eta_j-\eta_p|}{\sqrt{1+|\eta_j|^2}}
\,.
\end{eqnarray}
Then
\begin{eqnarray*}
\lim_{\varepsilon\rightarrow0}
\frac{\prod_{j=1}^N(z_1-\eta_j z_2)}{(2i\pi)^2}
\int_{
\left|
\prod_{j=1}^N
(\zeta_1-\eta_j\zeta_2)
\right|
>\varepsilon}
\frac{f(\zeta)\,\omega'(\overline{\zeta})\wedge\omega^(\zeta)}
{\prod_{j=1}^N(z_1-\eta_j z_2)
(1-<\overline{\zeta},z>)^2}
& = &
\end{eqnarray*}
\begin{eqnarray*}
& = &
\lim_{\varepsilon\rightarrow0}
\frac{\prod_{j=1}^N(z_1-\eta_j z_2)}{(2i\pi)^2}
\int_{
\bigcap_{j=1}^N
\{|\zeta_1-\eta_j\zeta_2|
>2\varepsilon/C_j\}
}
\frac{f(\zeta)\,\omega'(\overline{\zeta})\wedge\omega^(\zeta)}
{\prod_{j=1}^N(z_1-\eta_j z_2)
(1-<\overline{\zeta},z>)^2}
\,.
\end{eqnarray*}

\end{lemma}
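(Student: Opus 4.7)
The plan is to show that the integrand
$$H(\zeta):=\frac{f(\zeta)\,\omega'(\overline{\zeta})\wedge\omega(\zeta)}{\prod_{j=1}^{N}(\zeta_1-\eta_j\zeta_2)\,(1-\langle\overline{\zeta},z\rangle)^{2}}$$
is absolutely integrable on $\mathbb{S}_{2}$. Once this is granted, both $A_\varepsilon:=\{|\prod_j(\zeta_1-\eta_j\zeta_2)|>\varepsilon\}\cap\mathbb{S}_2$ and $B_\varepsilon:=\bigcap_{j=1}^N\{|\zeta_1-\eta_j\zeta_2|>2\varepsilon/C_j\}\cap\mathbb{S}_2$ increase monotonically as $\varepsilon\downarrow 0$ to $\mathbb{S}_2\setminus\bigcup_p L_p$ (with $L_p:=\{\zeta_1=\eta_p\zeta_2\}\cap\mathbb{S}_2$), and dominated convergence with majorant $|H|$ forces both limits to equal $\int_{\mathbb{S}_2}H$. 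For $z\in\mathbb{B}_2$, $|1-\langle\overline{\zeta},z\rangle|\geq 1-\|z\|>0$ on $\mathbb{S}_2$ and $f$ is bounded on $\overline{\mathbb{B}_2}$, so the only singularities of $H$ are simple poles along the circles $L_p$, which are pairwise disjoint since $L_p\cap L_q\subset\{\zeta_1=\zeta_2=0\}\cap\mathbb{S}_2=\emptyset$ for $p\neq q$.

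The key integrability estimate comes from a local parametrisation of $\mathbb{S}_2$ near $L_p$ by $(\phi,w)\in(\R/2\pi\Z)\times\C$,
$$\zeta=\frac{e^{i\phi}}{\sqrt{1+|\eta_p+w|^{2}}}\,\bigl(\eta_p+w,\,1\bigr),$$
giving $\zeta_1-\eta_p\zeta_2=e^{i\phi}w/\sqrt{1+|\eta_p+w|^{2}}$, while $|\zeta_1-\eta_j\zeta_2|=|\eta_p+w-\eta_j|/\sqrt{1+|\eta_p+w|^{2}}$ stays bounded below by a positive constant for $j\neq p$ and $|w|$ small. A direct calculation using the constraint $|\zeta_1|^{2}+|\zeta_2|^{2}=1$ to eliminate the redundant direction shows that $\omega'(\overline{\zeta})\wedge\omega(\zeta)|_{\mathbb{S}_2}=a(\phi,w)\,d\phi\wedge dw\wedge d\overline{w}$ with $a$ continuous and bounded near $w=0$; hence on a small tube $\{|w|<\delta\}$ one has
$$\int_{|w|<\delta}|H|\;\leq\;\mathrm{const}\cdot\int_{0}^{2\pi}d\phi\int_{|w|<\delta}\frac{|dw\wedge d\overline{w}|}{|w|}=O(\delta),$$
which, together with boundedness of $H$ away from the finitely many tubes, yields $H\in L^{1}(\mathbb{S}_2)$.

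Combining these steps, $\lim_{\varepsilon\to 0}\int_{A_\varepsilon}H=\int_{\mathbb{S}_2}H=\lim_{\varepsilon\to 0}\int_{B_\varepsilon}H$, and multiplying both sides by the common prefactor $\prod_j(z_1-\eta_jz_2)/(2i\pi)^{2}$ gives the claimed equality. The main (and essentially only) obstacle is the tubular-coordinate computation showing that $a(\phi,w)$ remains bounded near $w=0$, a routine but slightly tedious calculation with the differentials of the parametrisation and the sphere constraint; the remainder of the argument is a standard application of dominated convergence, and in particular the specific form of the constants $C_p$ plays no role beyond ensuring that $B_\varepsilon$ exhausts $\mathbb{S}_2\setminus\bigcup_pL_p$ as $\varepsilon\to 0$.
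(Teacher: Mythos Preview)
Your argument is correct and takes a genuinely different, more conceptual route than the paper. You observe that the integrand $H$ has only simple poles along the pairwise disjoint circles $L_p$ inside the three-dimensional sphere, hence is absolutely integrable (the singularity $1/|w|$ against the area element $|dw\wedge d\overline w|$ is locally integrable in $\C$); once $H\in L^1(\mathbb S_2)$, both excised integrals converge to the same full integral by dominated convergence, regardless of the precise shape of the excised neighbourhoods. The paper does \emph{not} prove absolute integrability; instead it establishes the inclusion $\{|\prod_j(\zeta_1-\eta_j\zeta_2)|\le\varepsilon\}\subset\bigcup_j\{|\zeta_1-\eta_j\zeta_2|\le 2\varepsilon/C_j\}$ (this is where the specific constants $C_j$ enter), then bounds the integral over the difference set by using $|\prod_j(\zeta_1-\eta_j\zeta_2)|\ge\varepsilon$ there together with a measure estimate $O(\varepsilon^2)$ for the thin tubes, giving an $O(\varepsilon)$ contribution. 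Your approach is shorter and makes clear that the lemma is really an $L^1$ statement, at the small cost of the tubular-chart computation for the volume form; the paper's approach avoids any explicit parametrisation of $\mathbb S_2$ but pays for it with a more delicate two-scale measure argument. Note also that the boundedness of your coefficient $a(\phi,w)$ near $w=0$ is automatic: $\omega'(\overline\zeta)\wedge\omega(\zeta)$ is a smooth $3$-form on $\C^2$, your chart $(\phi,w)\mapsto\zeta$ is a smooth immersion into $\mathbb S_2$, so the pullback coefficient is smooth, hence bounded on compacta---no ``slightly tedious calculation'' is actually required.
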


\begin{proof}

First, $z\in\mathbb{B}_2$ being fixed,
one has to integrate the following $3$-differential form
\begin{eqnarray*}
\frac{\varphi(\zeta)}
{\prod_{j=1}^N(\zeta_1-\eta_j\zeta_2)}
\,,
\end{eqnarray*}
with $\varphi$ a $3$-form that is
$C^{\infty}$ on $\mathbb{S}_2$
(since
$|<\overline{\zeta},z>|\leq\|\zeta\|\,\|z\|<1$).

On the other hand, one has
\begin{eqnarray}\label{prelim0}
\mathbb{S}_2\cap\{\zeta_1=\eta_j\zeta_2\}
& = &
\{\zeta_1=\eta_j\zeta_2,\;
|\zeta_1|=\alpha_p\}
\,.
\end{eqnarray}
For all $\varepsilon>0$ small enough, if
$\prod_{j=1}^N|\zeta_1-\eta_j\zeta_2|\leq\varepsilon$,
then $\exists\,p$,
$|\zeta_1-\eta_p\zeta_2|\leq\varepsilon^{1/N}$ and
\begin{eqnarray*}
\varepsilon\;\geq\;
\prod_{j=1}^N
|\zeta_1-\eta_j\zeta_2|
& \sim &
|\zeta_1-\eta_{p}\zeta_2|
\prod_{j\neq p}
|\eta_{p}-\eta_j||\zeta_2|
\;\sim\;
|\zeta_1-\eta_{p}\zeta_2|
\prod_{j\neq p}
\frac{|\eta_{p}-\eta_j|}
{\sqrt{1+|\eta_{p}|^2}}
\\
& \sim &
C_{p}|\zeta_1-\eta_{p}\zeta_2|
\,,
\end{eqnarray*}
thus
\begin{eqnarray*}
A_{\varepsilon}\;:=\;
\left\{
\prod_{j=1}^N
|\zeta_1-\eta_j\zeta_2|\leq\varepsilon
\right\}
& \subset &
\bigcup_{j=1}^N
\left\{
|\zeta_1-\eta_j\zeta_2|\leq2\varepsilon/C_j
\right\}
\;=:\;
B_{\varepsilon}
\end{eqnarray*}
(the union being disjoint for $\varepsilon$ small enough since
the $\eta_j$ are all differents).
Since
\begin{eqnarray*}
\int_{\mathbb{S}_2\setminus A_{\varepsilon}}
\frac{\varphi(\zeta)}{\prod_{j=1}^N(\zeta_1-\eta_j\zeta_2)}
& = &
\int_{\mathbb{S}_2\setminus B_{\varepsilon}}
\frac{\varphi(\zeta)}{\prod_{j=1}^N(\zeta_1-\eta_j\zeta_2)}
+
\int_{B_{\varepsilon}\setminus A_{\varepsilon}}
\frac{\varphi(\zeta)}{\prod_{j=1}^N(\zeta_1-\eta_j\zeta_2)}
\,,
\end{eqnarray*}
in order to prove the lemma it is sufficient to prove that
\begin{eqnarray}\label{prelim1}
\int_{B_{\varepsilon}\setminus A_{\varepsilon}}
\frac{\varphi(\zeta)}{\prod_{j=1}^N(\zeta_1-\eta_j\zeta_2)}
& \xrightarrow[\varepsilon\rightarrow0]{} &
0\,.
\end{eqnarray}
One has
\begin{eqnarray*}
\left|
\int_{B_{\varepsilon}\setminus A_{\varepsilon}}
\frac{\varphi(\zeta)}{\prod_{j=1}^N(\zeta_1-\eta_j\zeta_2)}
\right|
& \leq &
\sum_{j=1}^N
\int_{
\{|\zeta_1-\eta_j\zeta_2|\leq2\varepsilon/C_j,\,
\prod_{i=1}^N|\zeta_1-\eta_i\zeta_2|\geq\varepsilon\}
}
\left|
\frac{\varphi(\zeta)}{\prod_{j=1}^N(\zeta_1-\eta_j\zeta_2)}
\right|
\\
& \leq &
\frac{1}{\varepsilon}
\sum_{j=1}^N
\int_{
\{|\zeta_1-\eta_j\zeta_2|\leq2\varepsilon/C_j,\,
\prod_{i=1}^N|\zeta_1-\eta_i\zeta_2|\geq\varepsilon\}
}
|\varphi(\zeta)|
\\
& \leq &
\frac{1}{\varepsilon}
\sum_{j=1}^N
\int_{
\{|\zeta_1-\eta_j\zeta_2|\leq2\varepsilon/C_j\}}
|\varphi(\zeta)|
\\
& = &
\frac{1}{\varepsilon}
\sum_{j=1}^N
\int_{
\{|\zeta_1-\eta_j\zeta_2|\leq2\varepsilon/C_j\},
||\zeta_1|-\alpha_j|\leq b_j\varepsilon\}}
|\varphi(\zeta)|
\,,
\end{eqnarray*}
the last equality coming 
from~(\ref{prelim0}).
Since $\varphi$ is bounded on
$\mathbb{S}_2$, for $j=1,\ldots,N$ one has
\begin{eqnarray*}
\int_{
\{|\zeta_1-\eta_j\zeta_2|\leq2\varepsilon/C_j\},
||\zeta_1|-\alpha_j|\leq b_j\varepsilon\}}
|\varphi(\zeta)|
& \leq &
M
\int_{\alpha_j-b_j\varepsilon}^{\alpha_j+b_j\varepsilon}
2rdr
\int_{|\zeta_2|=\sqrt{1-r^2}}
d\theta_2
\int_{|\zeta_1|=r,|\zeta_1-\eta_j\zeta_2|\leq2\varepsilon/C_j}
d\theta_1
\,.
\end{eqnarray*}
For all $\varepsilon$ small enough and $\zeta_2$ fixed,
\begin{eqnarray*}
\theta_1=Arg(\zeta_1)
& = &
Arg(\eta_j\zeta_2)
+Arg\left(\frac{\zeta_1}{\eta_j\zeta_2}\right)
\;=\;
Arg(\eta_j\zeta_2)
+
Arg\left(1+\frac{\zeta_1-\eta_j\zeta_2}{\eta_j\zeta_2}\right)
\\
& = &
Arg(\eta_j\zeta_2)
+
\frac{1}{i}
\Im\,Log\left(1+\frac{O(\varepsilon)}{\eta_j\zeta_2}\right)
\;=\;
Arg(\eta_j\zeta_2)
+O(\varepsilon)
\,,
\end{eqnarray*}
then
\begin{eqnarray*}
\int_{\alpha_j-b_j\varepsilon}^{\alpha_j+b_j\varepsilon}
2rdr
\int_{|\zeta_2|=\sqrt{1-r^2}}
d\theta_2
\int_{|\zeta_1-\eta_j\zeta_2|\leq2\varepsilon/C_j,|\zeta_1|=r}
d\theta_1
& = &
\int_{\alpha_j-b_j\varepsilon}^{\alpha_j+b_j\varepsilon}
2rdr
\int_{|\zeta_2|=\sqrt{1-r^2}}
d\theta_2
O(\varepsilon)
\\
& = &
O\left(\varepsilon^2\right)
\,.
\end{eqnarray*}
Finally
\begin{eqnarray*}
\left|
\int_{B_{\varepsilon}\setminus A_{\varepsilon}}
\frac{\varphi(\zeta)}{\prod_{j=1}^N(\zeta_1-\eta_j\zeta_2)}
\right|
& = &
\frac{1}{\varepsilon}
N\,O\left(\varepsilon^2\right)
\;=\;
O(\varepsilon)
\;\xrightarrow[\varepsilon\rightarrow0]{}0\,.
\end{eqnarray*}

\end{proof}

\bigskip

Now we can prove Lemma~\ref{prelimformula}.

\begin{proof}

By relations~(\ref{relation1}) and~(\ref{relation2}),
in order to prove the lemma
it is sufficient to prove the second equality.
By the previous lemma it is sufficient to prove that
\begin{eqnarray*}
\lim_{\varepsilon\rightarrow0}
\frac{\prod_{j=1}^N(z_1-\eta_j z_2)}{(2i\pi)^2}
\int_{
\bigcap_{j=1}^N
\{|\zeta_1-\eta_j\zeta_2|
>2\varepsilon/C_j\}
}
\frac{f(\zeta)\,\omega'(\overline{\zeta})\wedge\omega^(\zeta)}
{\prod_{j=1}^N(z_1-\eta_j z_2)
(1-<\overline{\zeta},z>)^2}
& = &
\end{eqnarray*}
\begin{eqnarray}\label{prelimpv}
\;\;\;\;\;\;\;\;\;\;\;\;\;\;\;\;\;\;\;\;\;
& = &
\lim_{\varepsilon\rightarrow0}
\frac{\prod_{j=1}^N(z_1-\eta_j z_2)}{(2i\pi)^2}
\int_{\Sigma_{\varepsilon}}
\frac{f(\zeta)\,\omega'(\overline{\zeta})\wedge\omega(\zeta)}
{\prod_{j=1}^N(z_1-\eta_j z_2)
(1-<\overline{\zeta},z>)^2}
\,.
\end{eqnarray}

If $\exists\,\eta_{j_0}=0$ then
$\{|\zeta_1-\eta_{j_0}\zeta_2|>2\varepsilon/C_{j_0}\}
=\{|\zeta_1|>2\varepsilon/C_{j_0}\}$. Since
$1/\zeta_1$ is defined in the neighborhood
of any
$\{||\zeta_1|-\alpha_j|\leq b_j\varepsilon\}$,
one just has to consider the case of $\eta_j\neq0,\,j\neq j_0$
with $\varphi(\zeta)/\zeta_1$. So one can assume that
$\eta_j\neq0,\,\forall\,j=1,\ldots,N$ (then
$\alpha_j>0$).
Without loss of generality, one has
\begin{eqnarray*}
B_{\varepsilon}
& \subset &
\bigcup_{j=1}^N
\{||\zeta_1|-\alpha_j|\leq b_j\varepsilon\}
\;=\;
\bigsqcup_{k=1}^K
\{||\zeta_1|-\alpha_{j_k}|\leq b_{j_k}\varepsilon\}
\;=:\;
C_{\varepsilon}
\,,
\end{eqnarray*}
with
$\alpha_{1}=\cdots=\alpha_{j_1}
\neq\alpha_{j_1+1}=\cdots=\alpha_{j_2}\neq
\cdots\neq\alpha_{j_{K-1}+1}=\cdots=\alpha_{j_K}$.
Then for all $\varepsilon$ small enough,
\begin{eqnarray*}
C_{\varepsilon}\setminus B_{\varepsilon}
& = &
\bigsqcup_{k=1}^K
\left(
\left\{\left||\zeta_1|-\alpha_{j_k}\right|\leq c_{j_k}\varepsilon\right\}
\setminus B_{\varepsilon}
\right)
\\
& = &
\bigsqcup_{k=1}^K
\;
\bigcap_{\alpha_j=\alpha_{j_k}}
\left(
\left\{\left|\,|\zeta_1|-\alpha_{j_k}\right|\leq c_{j_k}\varepsilon\right\}
\cap
\{|\zeta_1-\eta_j\zeta_2|\geq2\varepsilon/C_j\}
\right)
\,,
\end{eqnarray*}
thus
\begin{eqnarray*}
\int_{C_{\varepsilon}\setminus B_{\varepsilon}}
\frac{\varphi(\zeta)}{\prod_{j=1}^N(\zeta_1-\eta_j\zeta_2)}
& = &
\sum_{k=1}^K
\int_{\{||\zeta_1|-\alpha_{j_k}|\leq c_{j_k}\varepsilon\}
\cap
\{|\zeta_1-\eta_j\zeta_2|\geq2\varepsilon/C_j\}}
\frac{\varphi(\zeta)}{\prod_{j=1}^N(\zeta_1-\eta_j\zeta_2)}
\,.
\end{eqnarray*}
Since for all $j=1,\ldots,N$
\begin{eqnarray*}
\frac{\varphi(\zeta)}{\prod_{j=1}^N(\zeta_1-\eta_j\zeta_2)}
& = &
\frac{\widetilde{\varphi_j}(\zeta)}{\zeta_1-\eta_j\zeta_2}
\\
& = &
\frac{\widetilde{\varphi_j}(\eta_j\zeta_2,\zeta_2)}{\zeta_1-\eta_j\zeta_2}
+
\frac{\widetilde{\varphi_j}(\zeta)-\widetilde{\varphi_j}(\eta_j\zeta_2,\zeta_2)}
{\zeta_1-\eta_j\zeta_2}
\\
& = &
\frac{\psi_{j,1}(\zeta_2)}{\zeta_1-\eta_j\zeta_2}
+
\psi_{j,2}(\zeta)
\,,
\end{eqnarray*}
with $\psi_{j,1},\,\psi_{j,2}$ locally
integrables and bounded in the neighborhood of
$\{\zeta_1-\eta_j\zeta_2=0\}$.
On the other hand, for all $k=1,\ldots,K$
\begin{eqnarray*}
\{||\zeta_1|-\alpha_{j_k}|\leq c_{j_k}\varepsilon\}
& = &
\bigsqcup_{j=j_{k-1}+1}^{j_k}
\left(U_j\cap
\{||\zeta_1|-\alpha_{j_k}|\leq c_{j_k}\varepsilon\}
\right)
\,,
\end{eqnarray*}
where 
$U_{j_{k-1}+1},\ldots,U_{j_k}$ is a partition such that
$\varphi_j,\,\psi_j$ are bounded
in $U_j$
(for example, 
$U_{j}=\{|\zeta_1-\eta_j\zeta_2|\leq\varepsilon_0\}$,
for all $j=j_{k-1}+1,\ldots,j_k$, with $\varepsilon_0$
small enough so that
$U_j\cap U_i\cap\mathbb{S}_2=\emptyset$
if $j\neq i$; finally one replaces
$U_{j_{k-1}+1}$ with
$U_{j_{k-1}+1}\bigcup
\left(
\mathbb{S}_2\setminus\bigcup_{j=j_{k-1}+2}^{j_k}U_j
\right)$ ).
Then
\begin{eqnarray*}
\int_{C_{\varepsilon}\setminus B_{\varepsilon}}
\frac{\varphi(\zeta)}{\prod_{j=1}^N(\zeta_1-\eta_j\zeta_2)}
& = &
\;\;\;\;\;\;\;\;\;\;\;\;\;\;\;\;\;\;\;\;\;\;\;\;\;\;\;\;\;\;\;
\;\;\;\;\;\;\;\;\;\;\;\;\;\;\;\;\;\;\;\;\;\;\;\;\;\;\;\;\;\;\;
\;\;\;\;\;\;\;\;\;\;\;\;\;\;\;\;\;\;\;\;
\end{eqnarray*}
\begin{eqnarray*}
\;\;\;\;\;\;\;\;\;\;
& = &
\sum_{k=1}^K
\sum_{j=j_{k-1}+1}^{j_k}
\int_{
U_j\cap\{||\zeta_1|-\alpha_{j_k}|\leq c_{j_k}\varepsilon\}
\cap
\{|\zeta_1-\eta_j\zeta_2|\geq2\varepsilon/C_j\}}
\left(
\frac{\psi_{j,1}(\zeta_2)}{\zeta_1-\eta_j\zeta_2}
+
\psi_{j,2}(\zeta)
\right)
\,.
\end{eqnarray*}
Since
\begin{eqnarray*}
\left|
\int_{U_j\cap
\{||\zeta_1|-\alpha_{j_k}|\leq c_{j_k}\varepsilon\}
\cap
\{|\zeta_1-\eta_j\zeta_2|\geq2\varepsilon/C_j\}}
\psi_{j,2}(\zeta)
\right|
\;\leq\;
M
\int_{\mathbb{S}_2\cap
\{||\zeta_1|-\alpha_{j_k}|\leq c_{j_k}\varepsilon\}
}
d\lambda
\xrightarrow[\varepsilon\rightarrow0]{}0\,,
\end{eqnarray*}
one has
\begin{eqnarray*}
\lim_{\varepsilon\rightarrow0}
\int_{C_{\varepsilon}\setminus B_{\varepsilon}}
\frac{\varphi(\zeta)}{\prod_{j=1}^N(\zeta_1-\eta_j\zeta_2)}
& = &
\;\;\;\;\;\;\;\;\;\;\;\;\;\;\;\;\;\;\;\;\;\;\;\;\;\;\;\;\;
\;\;\;\;\;\;\;\;\;\;\;\;\;\;\;\;\;\;\;\;\;\;\;\;\;\;\;\;\;
\end{eqnarray*}
\begin{eqnarray*}
& = &
\lim_{\varepsilon\rightarrow0}
\sum_{k=1}^K
\sum_{j=j_{k-1}+1}^{j_k}
\int_{
U_j\cap\{||\zeta_1|-\alpha_{j_k}|\leq c_{j_k}\varepsilon\}
\cap
\{|\zeta_1-\eta_j\zeta_2|\geq2\varepsilon/C_j\}}
\frac{\psi_{j,1}(\zeta_2)}{\zeta_1-\eta_j\zeta_2}
\\
& = &
\lim_{\varepsilon\rightarrow0}
\sum_{k=1}^K
\sum_{j=j_{k-1}+1}^{j_k}
\int_{\alpha_{j_k}-c_{j_k}\varepsilon}^{\alpha_{j_k}+c_{j_k}\varepsilon}
(-2rdr)
\int_{|\zeta_2|=\sqrt{1-r^2}}
\widetilde{\psi_j}(r,\zeta_2)
\frac{d\zeta_2}{\zeta_2}
\times
\\
& &
\;\;\;\;\;\;\;\;\;\;\;\;\;\;\;\;\;\;\;\;\;\;\;\;\;\;\;\;\;\;\;\;
\times
\int_{|\zeta_1|=r,\zeta\in U_j,|\zeta_1-\eta_j\zeta_2|\geq2\varepsilon/C_j}
\frac{d\zeta_1}{\zeta_1(\zeta_1-\eta_j\zeta_2)}
\,,
\end{eqnarray*}
because
\begin{eqnarray*}
\omega'(\overline{\zeta})\wedge\omega(\zeta)
& = &
\left(
\frac{r^2}{\zeta_1}
d\left(
\frac{1-r^2}{\zeta_2}
\right)
-\frac{1-r^2}{\zeta_2}
d\left(
\frac{r^2}{\zeta_1}
\right)
\right)
\wedge
d\zeta_1\wedge
d\zeta_2
\\
& = &
-2rdr\wedge
\frac{d\zeta_1}{\zeta_1}\wedge
\frac{d\zeta_2}{\zeta_2}
\,.
\end{eqnarray*}
Now for all
$r\neq\alpha_j$ and
$|\zeta_2|=\sqrt{1-r^2}$
($\neq\alpha_j/|\eta_j|$), one has
(since $\forall\,\varepsilon$ small enough,
$\{|\zeta_1-\eta_j\zeta_2|\leq2\varepsilon/C_j\}
\subset U_j$)
\begin{eqnarray*}
\int_{|\zeta_1|=r,\zeta\in U_j,|\zeta_1-\eta_j\zeta_2|\geq2\varepsilon/C_j}
\frac{d\zeta_1}{\zeta_1(\zeta_1-\eta_j\zeta_2)}
& = &
\;\;\;\;\;\;\;\;\;\;\;\;\;\;\;\;\;\;\;\;\;\;\;\;\;\;\;\;\;\;\;\;\;\;\;
\;\;\;\;\;\;\;\;\;\;\;\;
\end{eqnarray*}
\begin{eqnarray*}
& = &
\left[
\int_{|\zeta_1|=r}
-\int_{|\zeta_1|=r,\zeta\notin U_j}
-\int_{|\zeta_1|=r,|\zeta_1-\eta_j\zeta_2|\leq2\varepsilon/C_j}
\right]
\frac{d\zeta_1}{\zeta_1(\zeta_1-\eta_j\zeta_2)}
\,.
\end{eqnarray*}

First, since
$|\zeta_1|>|\eta_j\zeta_2|$ if and only if
$r>\alpha_j$, one has
\begin{eqnarray*}
\left|
\int_{|\zeta_1|=r}
\frac{d\zeta_1}{\zeta_1(\zeta_1-\eta_j\zeta_2)}
\right|
& = &
\left|
(2i\pi)
\left(
-\frac{1}{\eta_j\zeta_2}
+{\bf 1}_{r>\alpha_j}
\frac{1}{\eta_j\zeta_2}
\right)
\right|
\\
& = &
\frac{2\pi}{|\eta_j\zeta_2|}
\left(1-{\bf 1}_{r>\alpha_j}\right)
\;\leq\;
\frac{2\pi}{|\eta_j|\sqrt{1-r^2}}
\,.
\end{eqnarray*}

Next,
$\forall\,j=j_{k-1}+1,\ldots,j_k$,
$U_j\supset\{|\zeta_1-\eta_j\zeta_2|\leq\varepsilon_0\}$,
then
\begin{eqnarray*}
\left|
\int_{|\zeta_1|=r,\zeta\notin U_j}
\frac{d\zeta_1}{\zeta_1(\zeta_1-\eta_j\zeta_2)}
\right|
& \leq &
\int_{|\zeta_1|=r,\zeta\notin U_j}
\frac{|d\zeta_1|}{r\varepsilon_0}
\;\leq\;
\int_{|\zeta_1|=r}
\frac{|d\zeta_1|}{r\varepsilon_0}
\;=\;
\frac{2\pi}{\varepsilon_0}
\,.
\end{eqnarray*}

Thus
\begin{eqnarray*}
\left|
\sum_{k=1}^K
\sum_{j=j_{k-1}+1}^{j_k}
\int_{\alpha_{j_k}-c_{j_k}\varepsilon}^{\alpha_{j_k}+c_{j_k}\varepsilon}
2rdr
\int_{|\zeta_2|=\sqrt{1-r^2}}
\widetilde{\psi_j}(r,\zeta_2)
d\zeta_2
\left[
\int_{|\zeta_1|=r}
-\int_{|\zeta_1|=r,\zeta\notin U_j}
\right]
\frac{d\zeta_1}{\zeta_1(\zeta_1-\eta_j\zeta_2)}
\right|
& \leq &
\end{eqnarray*}
\begin{eqnarray*}
\;\leq\;
\sum_{j=1}^N
\int_{\alpha_{j}-c_{j}\varepsilon}^{\alpha_{j}+c_{j}\varepsilon}
2rdr
\,
2\pi\sqrt{1-r^2}\,
\max_{j_{k-1}+1\leq j\leq j_{k}}\|\widetilde{\psi_j}\|_{\infty,U_j}
\,
2\pi
\left(
\frac{1}{|\eta_j|\sqrt{1-r^2}}
+\frac{1}{\varepsilon_0}
\right)
& \xrightarrow[\varepsilon\rightarrow0]{}
0\,.
\end{eqnarray*}

Finally, we consider the following integral
\begin{eqnarray*}
\int_{|\zeta_1|=r,|\zeta_1-\eta_j\zeta_2|\leq2\varepsilon/C_j}
\frac{d\zeta_1}{\zeta_1(\zeta_1-\eta_j\zeta_2)}
\end{eqnarray*}
(in the case where the measure of
$\{|\zeta_1|=r,|\zeta_1-\eta_j\zeta_2|\leq2\varepsilon/C_j\}$
is not zero). We set
$\theta_j:=Arg(\eta_j\zeta_2)$, $\zeta_1=re^{i\theta}$ so
\begin{eqnarray*}
\frac{2\varepsilon}{C_j}
\;\geq\;
|\zeta_1-\eta_j\zeta_2|
& = &
\left|
re^{i\theta}
-|\eta_j\zeta_2|e^{i\theta_j}
\right|
\;=\;
\left|
re^{i(\theta-\theta_j)}
-|\eta_j\zeta_2|
\right|
\\
& \geq &
\left|
\Im\left(
re^{i(\theta-\theta_j)}
-|\eta_j\zeta_2|
\right)
\right|
\;=\;
\left|
r\sin(\theta-\theta_j)
\right|
\,.
\end{eqnarray*}
Since for all $\varepsilon$ small enough,
$r\sim\alpha_j>0$, one has
$|\sin(\theta-\theta_j)|\leq O(\varepsilon)$,
then
$|\theta-\theta_j|\leq O(\varepsilon)$, ie
$\theta\in[\theta_j-\theta_{\varepsilon},\theta_j+\theta'_{\varepsilon}]$
with
$\theta_{\varepsilon},\,\theta'_{\varepsilon}=O(\varepsilon)$.
Since
\begin{eqnarray*}
\left|
re^{i((\theta_j-\theta)-\theta_j)}
-|\eta_j\zeta_2|
\right|
& = &
\left|
re^{-i\theta}
-|\eta_j\zeta_2|
\right|
\;=\;
\left|
re^{i\theta}
-|\eta_j\zeta_2|
\right|
\\
& = &
\left|
re^{i((\theta_j+\theta)-\theta_j)}
-|\eta_j\zeta_2|
\right|
\,,
\end{eqnarray*}
then
$\theta'_{\varepsilon}=\theta_{\varepsilon}$.
So
\begin{eqnarray*}
\int_{|\zeta_1|=r,|\zeta_1-\eta_j\zeta_2|\leq2\varepsilon/C_j}
\frac{d\zeta_1}{\zeta_1(\zeta_1-\eta_j\zeta_2)}
& = &
\int_{\theta_j-\theta_{\varepsilon}}^{\theta_j+\theta_{\varepsilon}}
\frac{ire^{i\theta}\,d\theta}
{re^{i\theta}(re^{i\theta}-|\eta_j\zeta_2|e^{i\theta_j})}
\\
& = &
ie^{-i\theta_j}
\int_{-\theta_{\varepsilon}}^{\theta_{\varepsilon}}
\frac{d\theta}
{r\cos\theta-|\eta_j|\sqrt{1-r^2}+ir\sin\theta}
\,.
\end{eqnarray*}
With the change of variables
$t=\tan(\theta/2)$, one has
$t\in[-t_{\varepsilon},t_{\varepsilon}]$,
with
$t_{\varepsilon}=\tan(\theta_{\varepsilon}/2)=O(\varepsilon)$,
then
\begin{eqnarray*}
\int_{-\theta_{\varepsilon}}^{\theta_{\varepsilon}}
\frac{d\theta}
{r\cos\theta-|\eta_j|\sqrt{1-r^2}+ir\sin\theta}
& = &
\;\;\;\;\;\;\;\;\;\;\;\;\;\;\;\;\;\;\;\;\;\;\;\;\;\;\;\;\;\;\;\;\;\;\;\;\;\;\;\;\;\;\;\;
\;\;\;\;\;\;\;\;\;\;
\end{eqnarray*}
\begin{eqnarray*}
& = &
\int_{-t_{\varepsilon}}^{t_{\varepsilon}}
\frac{2\,dt/(1+t^2)}
{r\frac{1-t^2}{1+t^2}
-|\eta_j|\sqrt{1-r^2}
+ir\frac{2t}{1+t^2}}
\\
& = &
-\frac{2}{r+|\eta_j|\sqrt{1-r^2}}
\int_{-t_{\varepsilon}}^{t_{\varepsilon}}
\frac{dt}
{t^2-2i\frac{r}{r+|\eta_j|\sqrt{1-r^2}}\,t
-\frac{r-|\eta_j|\sqrt{1-r^2}}{r+|\eta_j|\sqrt{1-r^2}}}
\\
& = &
-\frac{2}{r+|\eta_j|\sqrt{1-r^2}}
\int_{-t_{\varepsilon}}^{t_{\varepsilon}}
\frac{dt}
{(t-t_+)(t-t_-)}
\,,
\end{eqnarray*}
with
\begin{eqnarray*}
\begin{cases}
t_+=\dfrac{ir+i|\eta_j|\sqrt{1-r^2}}{r+|\eta_j|\sqrt{1-r^2}}
=i\,,
\\
t_-=\dfrac{ir-i|\eta_j|\sqrt{1-r^2}}{r+|\eta_j|\sqrt{1-r^2}}
=i\,\dfrac{r-|\eta_j|\sqrt{1-r^2}}{r+|\eta_j|\sqrt{1-r^2}}
\,.
\end{cases}
\end{eqnarray*}
Since
\begin{eqnarray*}
\frac{1}{(t-t_+)(t-t_-)}
& = &
\frac{r+|\eta_j|\sqrt{1-r^2}}
{2i|\eta_j|\sqrt{1-r^2}}
\left(
\frac{1}{t-t_+}
-\frac{1}{t-t_-}
\right)
\,,
\end{eqnarray*}
one has
\begin{eqnarray*}
\int_{-\theta_{\varepsilon}}^{\theta_{\varepsilon}}
\frac{d\theta}
{r\cos\theta-|\eta_j|\sqrt{1-r^2}+ir\sin\theta}
& = &
\;\;\;\;\;\;\;\;\;\;\;\;\;\;\;\;\;\;\;\;\;\;\;\;\;\;\;\;\;\;\;\;\;\;\;\;\;\;\;\;\;\;\;\;
\;\;\;\;\;\;\;\;\;\;\;\;\;\;\;\;
\end{eqnarray*}
\begin{eqnarray*}
& = &
\frac{i}{|\eta_j|\sqrt{1-r^2}}
\int_{-t_{\varepsilon}}^{t_{\varepsilon}}
\left(
\frac{1}{t-t_+}
-\frac{1}{t-t_-}
\right)
dt
\\
& = &
\frac{i}{|\eta_j|\sqrt{1-r^2}}
\left(
Log(t_{\varepsilon}-t_+)
-Log(-t_{\varepsilon}-t_+)
-Log(t_{\varepsilon}-t_-)
+Log(-t_{\varepsilon}-t_-)
\right)
\,,
\end{eqnarray*}
$Log$ being the principal determination of the logarithm on
$\C\setminus\R^+$
(that is well-defined since
$t+,\,t_-\neq0$ for all $r\neq\alpha_j$).
$\Im(t_{\varepsilon}-t_+)=\Im(-t_{\varepsilon}-t_+)$
then
$Log(t_{\varepsilon}-t_+)
-Log(-t_{\varepsilon}-t_+)=
Log\left(
\dfrac{t_{\varepsilon}-t_+}{-t_{\varepsilon}-t_+}
\right)$, so
\begin{eqnarray*}
Log\left(
\frac{t_{\varepsilon}-t_+}{-t_{\varepsilon}-t_+}
\right)
& = &
Log
\left(
\frac{t_{\varepsilon}-i}{-t_{\varepsilon}-i}
\right)
\;=\;
Log(1+O(\varepsilon))
\;=\;
O(\varepsilon)
\,.
\end{eqnarray*}
On the other hand, one can write
$r=\alpha_j+r'$ with
$r'\in[-c_j\varepsilon,c_j\varepsilon]$
(and still keep the variable $r$)
so
\begin{eqnarray*}
t_- 
& = &
i\,\frac{
r+\alpha_j-|\eta_j|\sqrt{1-\alpha_j^2}
\sqrt{1-\frac{2\alpha_j}{1-\alpha_j^2}r+O(r^2)}
}
{r+\alpha_j+|\eta_j|\sqrt{1-\alpha_j^2}
\sqrt{1+O(r)}
}
\\
& = &
i\,\frac{
r+\alpha_j-\alpha_j
\left(1-\frac{\alpha_j}{1-\alpha_j^2}r+O(r^2)\right)
}
{r+\alpha_j+\alpha_j+O(r)}
\;=\;
i\,\frac{
\dfrac{r}{1-\alpha_j^2}+O(r^2)}
{2\alpha_j+O(r)}
\;=\;
\frac{i}{2\alpha_j(1-\alpha_j^2)}\,r+O\left(r^2\right)
\,.
\end{eqnarray*}
Then
\begin{eqnarray*}
|\pm t_{\varepsilon}-t_-|
& \geq &
|t_-|
\;=\;
\left|
\frac{i}{2\alpha_j(1-\alpha_j^2)}\,r+O\left(r^2\right)
\right|
\;\geq\;
\frac{|r|}{4\alpha_j(1-\alpha_j^2)}
\,,
\end{eqnarray*}
for all $\varepsilon$ small enough and
$r\in[-c_j\varepsilon,c_j\varepsilon]$,
$r\neq0$. Therefore
\begin{eqnarray*}
\left|
Log(\pm t_{\varepsilon}-t_-)
\right|
& \leq &
\left|
Log|\pm t_{\varepsilon}-t_-|
\,\right|
+|Arg(\pm t_{\varepsilon}-t_-)|
\\
& \leq &
\left|
Log
\left(
\frac{|r|}{4\alpha_j(1-\alpha_j^2)}
\right)
\right|
+\pi
\;\leq\;
|Log|r|\,|
+O(1)
\,,
\end{eqnarray*}
thus
\begin{eqnarray*}
\left|
\int_{-\theta_{\varepsilon}}^{\theta_{\varepsilon}}
\frac{d\theta}
{r\cos\theta-|\eta_j|\sqrt{1-r^2}+ir\sin\theta}
\right|
& \leq &
\frac{1}{|\eta_j|\sqrt{1-(r+\alpha_j)^2}}
\left(
O(\varepsilon)
+2|Log|r|\,|+O(1)
\right)
\\
& = &
\frac{2}{\alpha_j}|Log|r|\,|+O(1)
\;=\;
O(Log|r|)
\,.
\end{eqnarray*}

Finally, we get for all
$\varepsilon$ small enough
\begin{eqnarray*}
\sum_{k=1}^K
\sum_{j=j_{k-1}+1}^{j_k}
\left|
\int_{\alpha_{j_k}-c_{j_k}\varepsilon}^{\alpha_{j_k}+c_{j_k}\varepsilon}
(-2rdr)
\int_{|\zeta_2|=\sqrt{1-r^2}}
\widetilde{\psi_j}(r,\zeta_2)
d\zeta_2
\int_{|\zeta_1|=r,|\zeta_1-\eta_j\zeta_2|\leq2\varepsilon/C_j}
\frac{d\zeta_1}{\zeta_1(\zeta_1-\eta_j\zeta_2)}
\right|
& \leq &
\end{eqnarray*}
\begin{eqnarray*}
\;\;\;\;\;\;\;\;\;\;\;\;\;\;\;\;\;\;\;\;\;\;\;\;\;\;\;\;\;\;\;\;\;\;\;\;
\;\;\;\;\;\;\;\;\;\;\;\;\;\;\;\;\;\;\;\;\;\;\;\;\;\;\;\;\;\;\;\;\;\;\;\;
& \leq &
\sum_{j=1}^N
\int_{-c_{j}\varepsilon}^{c_{j}\varepsilon}
O(|r|\,Log|r|)dr
\;\xrightarrow[\varepsilon\rightarrow0]{}0
\,.
\end{eqnarray*}

\end{proof}

This result yields
to the following consequence that will be usefull
in Section~\ref{proofs}, Subsection~\ref{endproofs}
(Lemma~\ref{lemmextensio}).

\begin{corollary}\label{equivcv0}

One has, for all
$f\in\mathcal{O}\left(\C^2\right)$,
all $N\geq1$
and all $z\in\mathbb{B}_2$,
\begin{eqnarray*}
\lim_{\varepsilon\rightarrow0}
\frac{\prod_{j=1}^N(z_1-\eta_jz_2)}{(2i\pi)^2}
\int_{\prod_{j=1}^N
|\zeta_1-\eta_j\zeta_2|>\varepsilon}
\frac{f(\zeta)\omega'\left(\overline{\zeta}\right)
\wedge\omega(\zeta)}
{\prod_{j=1}^N(\zeta_1-\eta_j\zeta_2)
(1-<\overline{\zeta},z>)^2}
& = &
\;\;\;\;\;\;\;\;
\end{eqnarray*}
\begin{eqnarray*}
\;\;\;\;\;\;\;\;\;\;\;\;\;\;\;\;\;\;\;\;\;\;\;\;\;\;\;\;\;\;
\;\;\;\;\;\;\;\;\;\;\;\;\;\;\;\;\;\;\;\;\;\;\;\;\;\;\;\;\;\;
\;\;\;\;\;\;\;\;\;\;\;\;\;\;\;
& = &
R_N(f;\eta)(z)
-\sum_{k+l\geq N}
a_{k,l}z_1^kz_2^l
\,.
\end{eqnarray*}

\end{corollary}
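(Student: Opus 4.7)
The plan is to combine the two integral representations $(\ref{relation1})$ and $(\ref{relationexp})$ of $f(z)$, using Lemma~\ref{prelimformula} as the bridge and the already-established explicit form of the residue integral from $(\ref{relation1})$.

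First I would fix $f\in\mathcal{O}\left(\C^2\right)$ and $z\in\mathbb{B}_2$. Since $f$ is entire, it belongs in particular to $\mathcal{O}\left(\overline{\mathbb{B}_2}\right)$, so both $(\ref{relation1})$ and $(\ref{relationexp})$ apply at this $z$. Write $I_2(\varepsilon)$ for the principal value integral appearing in the statement of the corollary (which is exactly the second term of $(\ref{relation1})$), and $I_1(\varepsilon)$ for the corresponding residue integral (the first term of $(\ref{relation1})$), so that
$$f(z) \;=\; \lim_{\varepsilon\to0}I_1(\varepsilon) \;-\; \lim_{\varepsilon\to0}I_2(\varepsilon).$$

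Next I would invoke Lemma~\ref{prelimformula}: it asserts that $\lim_\varepsilon I_2(\varepsilon)$ coincides with the corresponding limit of the integral over $\Sigma_\varepsilon$ in $(\ref{relation2})$, and similarly for $I_1(\varepsilon)$. These $\Sigma_\varepsilon$-integrals are the ones that have already been explicited (see the discussion in Section~1 and in \cite{irigoyen3}): an annulus-by-annulus Fubini on the torus decomposition $\{|\zeta_1|=r\}\cap\mathbb{S}_2$ for $\alpha_p+\varepsilon<r<\alpha_{p+1}-\varepsilon$, combined with term-by-term Cauchy integration against the Taylor expansion of $f$, gives in particular $\lim_\varepsilon I_1(\varepsilon)=E_N(f;\eta)(z)$. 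Subtracting the identity $(\ref{relationexp})$ from $f(z)=\lim I_1-\lim I_2$ and cancelling $f(z)$ then yields
$$\lim_{\varepsilon\to0} I_2(\varepsilon) \;=\; E_N(f;\eta)(z)-f(z) \;=\; R_N(f;\eta)(z)-\sum_{k+l\geq N}a_{k,l}z_1^kz_2^l,$$
which is precisely the claim.

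The main obstacle, if one wanted a fully self-contained argument, would be the explicit identification $\lim_\varepsilon I_1(\varepsilon)=E_N(f;\eta)(z)$: this step requires the precise formula for $P_N(\zeta,z)$ given in \cite{irigoyen3} together with a careful residue calculation on each annular piece of $\Sigma_\varepsilon$, in the same spirit as the logarithmic estimates already carried out in the proof of Lemma~\ref{prelimformula}. Since that calculation is available in \cite{irigoyen3}, the corollary itself reduces to the elementary subtraction above.
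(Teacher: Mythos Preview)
Your argument is correct and matches the paper's intended derivation: the corollary is stated there without proof, as an immediate consequence of Lemma~\ref{prelimformula} together with the explicit identification of the two $\Sigma_\varepsilon$-integrals carried out in \cite{irigoyen3} (which produces the decomposition~(\ref{relationexp})). Your subtraction $\lim I_2 = E_N - f = R_N - \sum_{k+l\geq N} a_{k,l}z_1^kz_2^l$ is exactly the elementary step the paper has in mind, and you correctly flag that the only nontrivial input is the identification $\lim \tilde I_1 = E_N$ from the cited reference.
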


\bigskip

\subsection{Some properties of $\Delta_{p}$}

In this part, $\{\eta_j\}_{j\geq1}$ will be any set
of points all differents, and $h$ will be any function
defined on the $\eta_j,\,j\geq1$.

We begin with this first result that follows from the definition
of $\Delta_p$.

\begin{lemma}

For all $p\geq0$ and $0\leq q\leq p$,
\begin{eqnarray}\nonumber
\Delta_{p,(\eta_p,\ldots,\eta_1)}(h)
\left(\eta_{p+1}\right)
& = &
\Delta_{1,\eta_p}
\left[
\zeta\mapsto
\Delta_{p-1,(\eta_{p-1},\ldots,\eta_1)}(h)(\zeta)
\right]
\left(\eta_{p+1}\right)
\\\nonumber
& = &
\Delta_{p-1,(\eta_{p},\ldots,\eta_2)}
\left[
\zeta\mapsto
\Delta_{1,\eta_1}(h)(\zeta)
\right]
\left(\eta_{p+1}\right)
\\\nonumber
& = &
\Delta_{p-q,(\eta_{p},\ldots,\eta_{q+1})}
\left[
\zeta\mapsto
\Delta_{q,(\eta_{q},\ldots,\eta_1)}
(h)(\zeta)
\right]
\left(\eta_{p+1}\right)
\\\nonumber
& = &
\Delta_{1,\eta_p}
\left[
\zeta_p\mapsto
\Delta_{1,\eta_{p-1}}
\left[
\cdots
\left[
\zeta_1\mapsto
\Delta_{1,\eta_1}
(h)\left(\zeta_1\right)
\right]
\cdots
\right]
\left(\zeta_p\right)
\right]
\left(\eta_{p+1}\right)
\,.
\end{eqnarray}

\end{lemma}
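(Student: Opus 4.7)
The plan is to prove all four equalities as consequences of a single stronger statement: the closed-form symmetric expression
\begin{equation*}
\Delta_{p,(\eta_p,\ldots,\eta_1)}(h)(\eta_{p+1}) \;=\; \sum_{j=1}^{p+1}\frac{h(\eta_j)}{\prod_{k=1,\,k\neq j}^{p+1}(\eta_j-\eta_k)}\,.
\end{equation*}
This is the well-known identification of $\Delta_p$ with the classical divided difference, and it makes symmetry in all $p+1$ nodes manifest. I would prove it by induction on $p$, the case $p=0$ being the initialisation in~(\ref{defDelta}), and the induction step being the direct substitution of the hypothesis for the two $\Delta_p$ terms in the numerator of the recurrence defining $\Delta_{p+1}$, followed by recombining over the common denominator $\eta_{p+2}-\eta_{p+1}$.

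Once the symmetric formula is available, the lemma follows almost mechanically. The first equality is just the definition, read backwards with shifted indices. The fourth equality is obtained by iterating the first. For the second equality I would compute $\Delta_{p-1,(\eta_p,\ldots,\eta_2)}[\zeta\mapsto \Delta_{1,\eta_1}(h)(\zeta)](\eta_{p+1})$ by plugging $\Delta_{1,\eta_1}(h)(\zeta)=(h(\zeta)-h(\eta_1))/(\zeta-\eta_1)$ into the symmetric formula applied to $p-1$ nodes, then splitting into an $h(\eta_j)$-piece (which reproduces all terms of $\Delta_p(h)(\eta_{p+1})$ except the one indexed by $j=1$) and an $h(\eta_1)$-piece; the missing $j=1$ term and the $h(\eta_1)$-piece then combine, and one is reduced to verifying the classical identity $\sum_{j=1}^{p+1}\prod_{k\neq j}(\eta_j-\eta_k)^{-1}=0$ for $p\geq 1$, which expresses that the Lagrange interpolant of the constant function $1$ at $p+1$ distinct nodes has vanishing coefficient in front of the top-degree power. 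The third, general equality $\Delta_p=\Delta_{p-q}\circ\Delta_q$ then follows by iterating the first two, or equivalently by applying the same substitution argument with the symmetric formula for a block of $q$ nodes.

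The main obstacle is essentially bookkeeping: one must keep careful track of which nodes enter the inner and outer divided differences so that the products in the denominator combine correctly, and one must isolate the algebraic cancellation relying on $\sum_j\prod_{k\neq j}(\eta_j-\eta_k)^{-1}=0$. Nothing beyond these elementary identities is needed, and since the whole lemma is really a symmetry-plus-recurrence statement, no analytic input is required.
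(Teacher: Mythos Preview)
Your approach is correct, but it takes a longer detour than the paper does. The paper gives no proof at all for this lemma: it simply states that the result ``follows from the definition of $\Delta_p$'', meaning that the first equality is the recurrence~(\ref{defDelta}) read as $\Delta_1$ applied to $\Delta_{p-1}$, and the remaining equalities follow from it by a straightforward induction on $p$ (peel off $\Delta_{1,\eta_p}$ on the outside, apply the induction hypothesis to the inner $\Delta_{p-1}$ to slide $\Delta_{1,\eta_1}$ inward, then reassemble). No closed-form expression and no algebraic identity such as $\sum_j\prod_{k\neq j}(\eta_j-\eta_k)^{-1}=0$ is needed.

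Your route via the symmetric partial-fraction formula for divided differences is valid and has the advantage of establishing full permutation invariance in one stroke; in fact the paper proves that invariance only later, as a separate lemma (Lemma~\ref{Deltapermut}), by a more laborious argument going through Lagrange interpolation. So your method buys symmetry for free but at the cost of an explicit computation that the paper avoids here; the paper's method is shorter for the present statement but defers the symmetry to a later, harder proof.
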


\bigskip

Now we will prove the two following results that will be usefull in
Section~\ref{proofequivbounded}.

\begin{lemma}\label{lagrangebis}

Let $h$ be a function that is defined
on every $\eta_j,\,j\geq1$. Then for all
$N\geq1$,
one has
\begin{eqnarray*}
\sum_{p=1}^N
\prod_{j=1,j\neq p}^N
\frac{X-\eta_j}{\eta_p-\eta_j}
\,
h(\eta_p)
& = &
\sum_{p=0}^{N-1}
\prod_{j=1}^p
(X-\eta_j)
\,
\Delta_{p,(\eta_p,\ldots,\eta_1)}
(h)\left(\eta_{p+1}\right)
\,.
\end{eqnarray*}

\end{lemma}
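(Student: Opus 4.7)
Let $L_N(X)$ denote the left-hand side and $R_N(X)$ the right-hand side. Both are polynomials in $X$ of degree at most $N-1$. Since $L_N$ is the Lagrange interpolation polynomial, $L_N(\eta_k)=h(\eta_k)$ for $k=1,\ldots,N$. The plan is to argue by induction on $N$, using the fact that at each step the difference $L_{N+1}-L_N$ is forced to be a scalar multiple of $\prod_{j=1}^N(X-\eta_j)$, and to identify that scalar with the divided difference $\Delta_{N,(\eta_N,\ldots,\eta_1)}(h)(\eta_{N+1})$.

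The base case $N=1$ is immediate: $L_1(X)=h(\eta_1)=\Delta_{0,\emptyset}(h)(\eta_1)=R_1(X)$. For the inductive step, assume $L_N=R_N$. Since $L_{N+1}$ and $L_N$ both interpolate $h$ at $\eta_1,\ldots,\eta_N$, the polynomial $L_{N+1}-L_N$ has degree $\leq N$ and vanishes on $\eta_1,\ldots,\eta_N$, so
\begin{eqnarray*}
L_{N+1}(X)-L_N(X) & = & c_N\prod_{j=1}^N(X-\eta_j)
\end{eqnarray*}
for some scalar $c_N$. Evaluating at $X=\eta_{N+1}$ and using $L_{N+1}(\eta_{N+1})=h(\eta_{N+1})$ yields
\begin{eqnarray*}
c_N & = & \frac{h(\eta_{N+1})}{\prod_{j=1}^N(\eta_{N+1}-\eta_j)}
-\sum_{p=1}^N\frac{h(\eta_p)}{(\eta_{N+1}-\eta_p)\prod_{j=1,j\neq p}^N(\eta_p-\eta_j)}\,.
\end{eqnarray*}
Rewriting $(\eta_{N+1}-\eta_p)\prod_{j=1,j\neq p}^N(\eta_p-\eta_j)=-\prod_{j=1,j\neq p}^{N+1}(\eta_p-\eta_j)$, this collapses into the symmetric expression
\begin{eqnarray*}
c_N & = & \sum_{q=1}^{N+1}\frac{h(\eta_q)}{\prod_{j=1,j\neq q}^{N+1}(\eta_q-\eta_j)}\,.
\end{eqnarray*}
Combined with the induction hypothesis, this completes the step provided one knows that the right-hand side above equals $\Delta_{N,(\eta_N,\ldots,\eta_1)}(h)(\eta_{N+1})$.

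The one ingredient not yet at hand, and the only real obstacle, is therefore the symmetric closed-form expression for the divided difference:
\begin{eqnarray*}
\Delta_{p,(\eta_p,\ldots,\eta_1)}(h)(\eta_{p+1})
& = & \sum_{q=1}^{p+1}\frac{h(\eta_q)}{\prod_{j=1,j\neq q}^{p+1}(\eta_q-\eta_j)}\,.
\end{eqnarray*}
I would prove this by a separate induction on $p$, directly from the recursive definition~(\ref{defDelta}): writing
\begin{eqnarray*}
\Delta_{p,(\eta_p,\ldots,\eta_1)}(h)(\eta_{p+1})
& = & \frac{\Delta_{p-1,(\eta_{p-1},\ldots,\eta_1)}(h)(\eta_{p+1})-\Delta_{p-1,(\eta_{p-1},\ldots,\eta_1)}(h)(\eta_p)}{\eta_{p+1}-\eta_p}
\end{eqnarray*}
and applying the induction hypothesis to both differences, the terms corresponding to $q=1,\ldots,p-1$ combine via $\frac{1}{\eta_{p+1}-\eta_p}\bigl(\frac{1}{\eta_q-\eta_{p+1}}-\frac{1}{\eta_q-\eta_p}\bigr)=\frac{1}{(\eta_q-\eta_p)(\eta_q-\eta_{p+1})}$, producing the missing factors in the denominator. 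The remaining boundary terms in $\eta_p$ and $\eta_{p+1}$ fit straightforwardly into the same symmetric sum. Once this symmetry identity is in place, the inductive step above closes and the lemma follows.
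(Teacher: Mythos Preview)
Your argument is correct. Both you and the paper argue by induction on $N$, but the mechanics differ. The paper factors out $(X-\eta_1)$ from the Lagrange sum, introduces the auxiliary function $h_{1,X}(u)=(X-\eta_1)\Delta_{1,\eta_1}(h)(u)$, and applies the induction hypothesis to the $N$ points $\eta_2,\ldots,\eta_{N+1}$; the leftover term $h(\eta_1)\sum_p\prod_{j\neq p}\frac{X-\eta_j}{\eta_p-\eta_j}$ collapses to $h(\eta_1)$ via the identity that the Lagrange interpolant of the constant $1$ is $1$. This is a single induction that never invokes the symmetric closed form for divided differences. Your route instead adds the last point, uses the degree argument to isolate the leading coefficient $c_N$, and then needs the separate identity $\Delta_p=\sum_q h(\eta_q)/\prod_{j\neq q}(\eta_q-\eta_j)$, which you prove by its own induction. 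Your approach is the standard textbook derivation of the Newton form and has the side benefit of yielding the symmetric formula explicitly (which the paper later proves independently as Lemma~\ref{Deltapermut}); the paper's approach is slightly more economical in that it avoids that auxiliary lemma altogether.
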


\begin{proof}

This relation can be proved by induction on $N\geq1$. It is obvious
for $N=1$. If it is true for $N\geq1$, then
\begin{eqnarray*}
\sum_{p=1}^{N+1}
\prod_{j=1,j\neq p}^{N+1}
\frac{X-\eta_j}{\eta_p-\eta_j}
h(\eta_p)
& = &
(X-\eta_{1})
\sum_{p=2}^{N+1}
\prod_{j=2,j\neq p}^{N+1}
\frac{X-\eta_j}{\eta_p-\eta_j}
\frac{h(\eta_p)\pm h(\eta_{1})}
{\eta_{p}-\eta_{1}}
+h(\eta_{1})
\prod_{j=2}^{N+1}
\frac{X-\eta_j}{\eta_{1}-\eta_j}
\\
& = &
\sum_{p=2}^{N+1}
\prod_{j=2,j\neq p}^{N+1}
\frac{X-\eta_j}{\eta_p-\eta_j}
h_{1,X}(\eta_p)
+h(\eta_{1})
\sum_{p=1}^{N+1}
\prod_{j=1,j\neq p}^{N+1}
\frac{X-\eta_j}{\eta_p-\eta_j}
\,,
\end{eqnarray*}
where
\begin{eqnarray*}
h_{1,X}(u)
& := &
(X-\eta_{1})
\frac{h(u)-h(\eta_{1})}
{u-\eta_{1}}
\,.
\end{eqnarray*}
By induction,
\begin{eqnarray*}
\sum_{p=2}^{N+1}
\prod_{j=2,j\neq p}^{N+1}
\frac{X-\eta_j}{\eta_p-\eta_j}
h_{1,X}(\eta_p)
& = &
\sum_{p=1}^{N}
\prod_{j=2}^p
(X-\eta_j)
\,
\Delta_{p-1,(\eta_{p},\ldots,\eta_2)}
\left(h_{1,X}\right)\left(\eta_{p+1}\right)
\\
& = &
\sum_{p=1}^{N}
(X-\eta_1)
\prod_{j=2}^p
(X-\eta_j)
\Delta_{p-1,(\eta_{p},\ldots,\eta_2)}
\left(u\mapsto
\Delta_{1,\eta_1}(h)(u)
\right)\left(\eta_{p+1}\right)
\\
& = &
\sum_{p=1}^N
\prod_{j=1}^p
(X-\eta_j)
\Delta_{p,(\eta_p,\ldots,\eta_1)}
(h)(\eta_{p+1})
\,.
\end{eqnarray*}
On the other hand,
\begin{eqnarray*}
\sum_{p=1}^{N+1}
\prod_{j=1,j\neq p}^{N+1}
\frac{X-\eta_j}{\eta_p-\eta_j}
& = &
1
\,,
\end{eqnarray*}
since $1$ is the unique polynomial of degree at most
$N$ that coincides with $1$ on $N+1$ points. Then
\begin{eqnarray*}
\sum_{p=1}^{N+1}
\prod_{j=1,j\neq p}^{N+1}
\frac{X-\eta_j}{\eta_p-\eta_j}
h(\eta_p)
& = &
\sum_{p=1}^N
\prod_{j=1}^p
(X-\eta_j)
\Delta_{p,(\eta_p,\ldots,\eta_1)}
(h)(\eta_{p+1})
+h(\eta_1)
\\
& = &
\sum_{p=0}^N
\prod_{j=1}^p
(X-\eta_j)
\Delta_{p,(\eta_p,\ldots,\eta_1)}
(h)(\eta_{p+1})
\,,
\end{eqnarray*}
and this achieves the induction.

\end{proof}

\begin{lemma}\label{Deltaprod}

For all $p\geq0$, $g,\,h$ functions defined on the
$\eta_j$,
\begin{eqnarray*}
\Delta_{p,(\eta_p,\ldots,\eta_1)}(gh)\left(\eta_{p+1}\right)
& = &
\sum_{q=0}^p
\Delta_{p-q,(\eta_p,\ldots,\eta_{q+1})}(g)\left(\eta_{p+1}\right)
\,
\Delta_{q,(\eta_{q},\ldots,\eta_{1})}(h)\left(\eta_{q+1}\right)
\,.
\end{eqnarray*}

\end{lemma}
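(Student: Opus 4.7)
I would prove this by induction on $p$, as the statement is the classical Leibniz rule for Newton divided differences.

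The base case $p = 0$ is immediate: both sides reduce to $g(\eta_1)h(\eta_1)$ since $\Delta_{0,\emptyset}(gh)(\eta_1) = (gh)(\eta_1)$ and the sum on the right contains only the single term $q = 0$.

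For the inductive step, assume the identity for $p$. The plan is to invoke the recursive definition
\begin{eqnarray*}
\Delta_{p+1,(\eta_{p+1},\ldots,\eta_1)}(gh)(\eta_{p+2})
& = &
\frac{\Delta_{p,(\eta_p,\ldots,\eta_1)}(gh)(\eta_{p+2})
-\Delta_{p,(\eta_p,\ldots,\eta_1)}(gh)(\eta_{p+1})}{\eta_{p+2}-\eta_{p+1}}
\end{eqnarray*}
and then apply the inductive hypothesis to each of the two $\Delta_p(gh)$ on the right. Note that the hypothesis, being a statement about the divided difference over $p+1$ points, applies equally to the pointset $(\eta_1,\ldots,\eta_p,\eta_{p+1})$ and to $(\eta_1,\ldots,\eta_p,\eta_{p+2})$.

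The key observation is that in both expansions, for every $q<p$ the $h$-factor $\Delta_{q,(\eta_q,\ldots,\eta_1)}(h)(\eta_{q+1})$ depends only on $\eta_1,\ldots,\eta_{q+1}$ and is therefore identical in the two sums; only the $g$-factor changes, from $\Delta_{p-q,(\eta_p,\ldots,\eta_{q+1})}(g)(\eta_{p+1})$ to $\Delta_{p-q,(\eta_p,\ldots,\eta_{q+1})}(g)(\eta_{p+2})$. Dividing this difference by $\eta_{p+2}-\eta_{p+1}$ and applying the recursive definition of $\Delta$ (now to $g$) collapses it to $\Delta_{p+1-q,(\eta_{p+1},\eta_p,\ldots,\eta_{q+1})}(g)(\eta_{p+2})$. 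This accounts for the $q=0,\ldots,p-1$ terms of the target formula at level $p+1$.

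The main obstacle, and the only subtle point, is handling the term $q = p$ of the inductive hypothesis, where the $h$-factor does involve the extra evaluation point. The two $q=p$ contributions produce $g(\eta_{p+2})\Delta_{p,(\eta_p,\ldots,\eta_1)}(h)(\eta_{p+2})$ and $g(\eta_{p+1})\Delta_{p,(\eta_p,\ldots,\eta_1)}(h)(\eta_{p+1})$; their difference divided by $\eta_{p+2}-\eta_{p+1}$ must be shown to equal
\begin{eqnarray*}
\Delta_{1,\eta_{p+1}}(g)(\eta_{p+2})\cdot\Delta_{p,(\eta_p,\ldots,\eta_1)}(h)(\eta_{p+1})
+g(\eta_{p+2})\cdot\Delta_{p+1,(\eta_{p+1},\ldots,\eta_1)}(h)(\eta_{p+2}),
\end{eqnarray*}
which supplies precisely the missing $q=p$ and $q=p+1$ terms of the desired formula. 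This is a straightforward algebraic check that amounts to the Leibniz rule for the first-order divided difference applied to the product $g\cdot\Delta_p(h)$: add and subtract $g(\eta_{p+2})\Delta_p(h)(\eta_{p+1})$ in the numerator, then invoke the definitions of $\Delta_{1,\eta_{p+1}}$ and $\Delta_{p+1,(\eta_{p+1},\ldots,\eta_1)}$. With this identity established, assembling the pieces finishes the induction.
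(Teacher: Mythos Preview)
Your proof is correct. Both your argument and the paper's proceed by induction on $p$, but they peel the recursion from opposite ends. The paper uses the decomposition $\Delta_{p+1}=\Delta_{p,(\eta_{p+1},\ldots,\eta_2)}\circ\Delta_{1,\eta_1}$: it first writes $\Delta_{1,\eta_1}(gh)(\zeta)=g(\zeta)\,\Delta_{1,\eta_1}(h)(\zeta)+h(\eta_1)\,\Delta_{1,\eta_1}(g)(\zeta)$, then applies the inductive hypothesis \emph{once} to the product $g\cdot\Delta_{1,\eta_1}(h)$ over the shifted pointset $(\eta_2,\ldots,\eta_{p+2})$, and finally re-indexes. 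You instead use $\Delta_{p+1}=\Delta_{1,\eta_{p+1}}\circ\Delta_{p,(\eta_p,\ldots,\eta_1)}$: you apply the hypothesis \emph{twice}, to the two pointsets $(\eta_1,\ldots,\eta_p,\eta_{p+1})$ and $(\eta_1,\ldots,\eta_p,\eta_{p+2})$, and then collapse the difference termwise, the only nontrivial case being $q=p$, which you dispatch with the first-order Leibniz identity. The paper's route is marginally more economical (one invocation of the hypothesis rather than two), while yours makes the mechanism of how each target term arises more transparent; both are standard treatments of the Leibniz rule for divided differences.
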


\begin{proof}

We prove this result by induction on $p\geq0$.
It is obvious for $p=0$. If $p\geq0$ one has
\begin{eqnarray*}
\Delta_{p+1,(\eta_{p+1},\ldots,\eta_1)}(gh)
\left(\eta_{p+2}\right)
& = &
\Delta_{p,(\eta_{p+1},\ldots,\eta_2)}
\left[
\zeta\mapsto
\frac{(gh)(\zeta)-(gh)(\eta_1)\pm g(\zeta)h(\eta_1)}
{\zeta-\eta_1}
\right]
\left(\eta_{p+2}\right)
\\
& = &
\Delta_{p,(\eta_{p+1},\ldots,\eta_2)}
\left[
g(\zeta)
\Delta_{1,\eta_1}(h)(\zeta)
\right]
\left(\eta_{p+2}\right)
\\
& &
+\;h(\eta_1)
\Delta_{p,(\eta_{p+1},\ldots,\eta_2)}
\left[
\Delta_{1,\eta_1}(g)(\zeta)
\right]
\left(\eta_{p+2}\right)
\\
& = &
\sum_{q=0}^p
\Delta_{p-q,(\eta_{p+1},\ldots,\eta_{q+2})}(g)\left(\eta_{p+2}\right)
\Delta_{q,(\eta_{q+1},\ldots,\eta_{2})}
\left[
\Delta_{1,\eta_1}(h)(\zeta)
\right]
\left(\eta_{q+2}\right)
\\
& &
+\;
h(\eta_1)
\Delta_{p+1,(\eta_{p+1},\ldots,\eta_1)}
(g)
\left(\eta_{p+2}\right)
\\
& = &
\sum_{q=1}^{p+1}
\Delta_{p-q+1,(\eta_{p+1},\ldots,\eta_{q+1})}(g)\left(\eta_{p+2}\right)
\Delta_{q,(\eta_{q},\ldots,\eta_{1})}
(h)
\left(\eta_{q+1}\right)
\\
& &
+\;
h(\eta_1)
\Delta_{p+1,(\eta_{p+1},\ldots,\eta_1)}
(g)
\left(\eta_{p+2}\right)
\,,
\end{eqnarray*}
and this achieves the induction.

\end{proof}

The following result is an example of explicit calculation
of the $\Delta_p$ of an holomorhic function.

\begin{lemma}\label{Deltanal}

Let be 
$h\in\mathcal{O}(D(w_0,r))$ and
$h(w)=\sum_{n\geq0}a_n(w-w_0)^n$
its Taylor expansion for all
$|w-w_0|<r$. Assume that
$\forall\,j\geq1,\,\eta_j\in D(w_0,r)$. Then
for all $p\geq0$,
\begin{eqnarray}\nonumber
\Delta_{p,(\eta_p,\ldots,\eta_1)}
(h)\left(\eta_{p+1}\right)
& = &
\sum_{n\geq p}
a_n
\sum_{l_1=0}^{n-p}
(\eta_1-w_0)^{n-p-l_1}
\sum_{l_2=0}^{l_1}
(\eta_2-w_0)^{l_1-l_2}
\;\cdots
\\\label{Deltanal1}
& &
\cdots\;
\sum_{l_{p-1}=0}^{l_{p-2}}
(\eta_{p-1}-w_0)^{l_{p-2}-l_{p-1}}
\sum_{l_{p}=0}^{l_{p-1}}
(\eta_{p}-w_0)^{l_{p-1}-l_{p}}
(\eta_{p+1}-w_0)^{l_p}
\,.
\end{eqnarray}
In particular,
\begin{eqnarray*}
\lim_{\eta_1,\ldots,\eta_p,\eta_{p+1}\rightarrow w_0}
\Delta_{p,(\eta_p,\ldots,\eta_1)}
(h)\left(\eta_{p+1}\right)
& = &
a_p\;=\;
\frac{h^{(p)}(0)}{p!}
\,.
\end{eqnarray*}
On the other hand, if
$h\in\C[w]$, then for any subset
$\{\eta_j\}_{j\geq1}\subset\C$ and all
$p>\deg h$,
\begin{eqnarray*}
\Delta_{p,(\eta_p,\ldots,\eta_1)}
(h)\left(\eta_{p+1}\right)
& = & 0
\,.
\end{eqnarray*}

\end{lemma}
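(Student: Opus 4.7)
The plan is to prove~(\ref{Deltanal1}) by reducing to monomials and then rewriting a complete homogeneous symmetric polynomial as the nested sum that appears in the statement. Translating $w \mapsto w - w_0$ I may assume $w_0 = 0$, so $h(w) = \sum_{n \geq 0} a_n w^n$ on $D(0,r)$. Unrolling the recursion in~(\ref{defDelta}) shows that $\Delta_{p,(\eta_p,\ldots,\eta_1)}$ is a finite linear combination of the values $h(\eta_1), \ldots, h(\eta_{p+1})$ with coefficients depending only on the $\eta_j$. Since each Taylor series $h(\eta_j) = \sum_n a_n \eta_j^n$ converges absolutely, I can interchange this finite linear combination with the infinite sum. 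Hence it suffices to establish the monomial identity
\begin{eqnarray*}
\Delta_{p,(\eta_p,\ldots,\eta_1)}(w^n)(\eta_{p+1})
& = &
\sum_{m_1 + \cdots + m_{p+1} = n - p,\; m_i \geq 0}
\eta_1^{m_1} \eta_2^{m_2} \cdots \eta_{p+1}^{m_{p+1}}
\,,
\end{eqnarray*}
for every $n \geq 0$, the right-hand side being empty (hence zero) when $n < p$.

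I would prove this monomial identity by induction on $p$, starting from the trivial case $\Delta_0(w^n)(\eta_1) = \eta_1^n$. The inductive step uses $\Delta_{p+1}(w^n)(\eta_{p+2}) = (\Delta_p(w^n)(\eta_{p+2}) - \Delta_p(w^n)(\eta_{p+1}))/(\eta_{p+2} - \eta_{p+1})$; substituting the induction hypothesis produces a difference that telescopes, and the elementary identity $(a^k - b^k)/(a-b) = \sum_{i+j=k-1} a^i b^j$ converts the quotient into the complete homogeneous symmetric polynomial of degree $n - p - 1$ in $\eta_1, \ldots, \eta_{p+2}$. To rewrite this symmetric sum as the nested sum of~(\ref{Deltanal1}), I change indices via $m_1 = n - p - l_1$, $m_2 = l_1 - l_2$, $\ldots$, $m_p = l_{p-1} - l_p$, $m_{p+1} = l_p$, which telescopes $m_1 + \cdots + m_{p+1} = n - p$ and turns the constraints $m_i \geq 0$ into $0 \leq l_p \leq l_{p-1} \leq \cdots \leq l_1 \leq n - p$, exactly as displayed in the statement.

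The two remaining assertions follow directly from~(\ref{Deltanal1}). For the limit as $\eta_1, \ldots, \eta_{p+1} \to w_0$, every summand with $n > p$ carries at least one positive power of some $(\eta_j - w_0)$ and so vanishes, while the single $n = p$ term collapses to $a_p$. For $h \in \C[w]$ with $p > \deg h$, only indices $n \geq p > \deg h$ appear in the outer sum, but all the corresponding $a_n$ vanish, hence $\Delta_p(h)(\eta_{p+1}) = 0$. The main bookkeeping step is the index change converting the symmetric sum into the nested sum; everything else is a routine manipulation of divided differences, and no analytic subtlety arises beyond the legitimate interchange of sum and finite linear combination at the very start.
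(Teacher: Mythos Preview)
Your proof is correct and follows essentially the same route as the paper: reduce to $w_0=0$, induct on $p$ via the divided-difference recursion, and use the identity $(a^k-b^k)/(a-b)=\sum_{i+j=k-1}a^ib^j$ together with a reindexing. The one cosmetic difference is that you name the intermediate object as the complete homogeneous symmetric polynomial $h_{n-p}(\eta_1,\ldots,\eta_{p+1})$ and only convert to the nested-sum form at the end, whereas the paper carries the nested sums through the induction directly; this is a matter of presentation rather than substance.
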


\begin{proof}

The second relation is a consequence
of~(\ref{Deltanal1}) and the third with the choice
of $r=+\infty$.

By translation we can assume that
$w_0=0$. The lemma will be proved by induction
on $p\geq0$. This is true for $p=0$ if we admit that
$l_0=n-p=n$ and
\begin{eqnarray*}
\prod_{j=1}^0
\sum_{l_{j}=0}^{l_{j-1}}
\eta_j^{l_{j-1}-l_j}
\eta_1^{l_0}
& = &
\eta_1^n
\,.
\end{eqnarray*}
Now if it is true for $p\geq0$ then
\begin{eqnarray*}
\Delta_{p+1,(\eta_{p+1},\ldots,\eta_1)}
(h)\left(\eta_{p+2}\right)
& = &
\sum_{n\geq p}
a_n
\sum_{l_1=0}^{n-p}
\eta_1^{n-p-l_1}
\cdots\;
\sum_{l_{p}=0}^{l_{p-1}}
\eta_{p}^{l_{p-1}-l_{p}}
\Delta_{1,\eta_{p+1}}
\left(\zeta^{l_p}\right)
\left(\eta_{p+2}\right)
\\
& = &
\sum_{n\geq p}
a_n
\sum_{l_1=0}^{n-p}
\eta_1^{n-p-l_1}
\cdots\;
\sum_{l_{p}=0}^{l_{p-1}}
\eta_{p}^{l_{p-1}-l_{p}}
{\bf 1}_{l_p\geq1}
\sum_{l_{p+1}=0}^{l_p-1}
\eta_{p+1}^{l_p-1-l_{p+1}}
\eta_{p+2}^{l_{p+1}}
\\
& = &
\sum_{n\geq p}
a_n
\sum_{l_1=0}^{n-p}
\eta_1^{n-p-l_1}
\cdots\;
{\bf l}_{l_{p-1}\geq1}
\sum_{l_{p}=0}^{l_{p-1}-1}
\eta_{p}^{l_{p-1}-l_{p}-1}
\sum_{l_{p+1}=0}^{l_p}
\eta_{p+1}^{l_p-l_{p+1}}
\eta_{p+2}^{l_{p+1}}
\\
& = &
\sum_{n\geq p}
a_n
\sum_{l_1=1}^{n-p}
\eta_1^{n-p-l_1}
\cdots\;
\sum_{l_{p}=0}^{l_{p-1}}
\eta_{p}^{l_{p-1}-l_{p}}
\sum_{l_{p+1}=0}^{l_p}
\eta_{p+1}^{l_p-l_{p+1}}
\eta_{p+2}^{l_{p+1}}
\\
& = &
\sum_{n\geq p+1}
a_n
\sum_{l_1=0}^{n-p-1}
\eta_1^{n-p-l_1-1}
\cdots\;
\sum_{l_{p+1}=0}^{l_p}
\eta_{p+1}^{l_p-l_{p+1}}
\eta_{p+2}^{l_{p+1}}
\,,
\end{eqnarray*}
and the induction is achieved.

\end{proof}

The following result will be usefull in
Section~\ref{proofs},
Subsection~\ref{proofsuperbounded} (Proposition~\ref{unifDelta}).

\begin{lemma}\label{Deltapermut}

For all $p\geq0$, $h$ function defined on the $\eta_j$,
\begin{eqnarray*}
\Delta_{p,(\eta_{\sigma(p)},\ldots,\eta_{\sigma(1)})}(h)
\left(\eta_{\sigma(p+1)}\right)
& = &
\Delta_{p,(\eta_p,\ldots,\eta_1)}(h)
\left(\eta_{p+1}\right)
\,.
\end{eqnarray*}

\end{lemma}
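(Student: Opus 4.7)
The plan is to identify $\Delta_{p,(\eta_p,\ldots,\eta_1)}(h)(\eta_{p+1})$ as the leading coefficient of the unique Lagrange interpolation polynomial of degree at most $p$ passing through the points $(\eta_j,h(\eta_j))$, $j=1,\ldots,p+1$, and then invoke the manifest symmetry of that polynomial in the nodes to conclude that this leading coefficient does not depend on any ordering of the $\eta_j$.

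Concretely, I would apply Lemma~\ref{lagrangebis} with $N=p+1$, which gives the polynomial identity
\begin{eqnarray*}
\sum_{j=1}^{p+1}\prod_{k=1,k\neq j}^{p+1}\frac{X-\eta_k}{\eta_j-\eta_k}\,h(\eta_j)
& = &
\sum_{k=0}^{p}\prod_{j=1}^{k}(X-\eta_j)\,\Delta_{k,(\eta_k,\ldots,\eta_1)}(h)\!\left(\eta_{k+1}\right)
\end{eqnarray*}
in $\C[X]$. The left-hand side is the usual Lagrange interpolation polynomial; in particular, it depends only on the set $\{\eta_1,\ldots,\eta_{p+1}\}$ and on the values of $h$ at those points, but not on their ordering. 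On the right-hand side, each factor $\prod_{j=1}^{k}(X-\eta_j)$ is monic of degree exactly $k$, so the coefficient of $X^p$ is precisely $\Delta_{p,(\eta_p,\ldots,\eta_1)}(h)(\eta_{p+1})$, with no contribution from the terms $k<p$.

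Now, for any permutation $\sigma$ of $\{1,\ldots,p+1\}$, applying the same identity to the reordered tuple $(\eta_{\sigma(1)},\ldots,\eta_{\sigma(p+1)})$ yields the \emph{same} polynomial on the left (this is the point at which the symmetry enters) and, on the right, the coefficient of $X^p$ becomes $\Delta_{p,(\eta_{\sigma(p)},\ldots,\eta_{\sigma(1)})}(h)(\eta_{\sigma(p+1)})$. Identifying the two leading coefficients of the same polynomial gives the claimed equality.

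I do not expect any real obstacle here, the argument being essentially a single appeal to the previous lemma combined with uniqueness of Lagrange interpolation. The only point that warrants a careful sentence is that every Newton basis polynomial $\prod_{j=1}^{k}(X-\eta_j)$ with $k<p$ contributes strictly to monomials of degree less than $p$, so extracting the coefficient of $X^p$ is unambiguous and picks out exactly the divided difference of top order.
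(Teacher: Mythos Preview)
Your argument is correct and cleaner than the paper's own proof. You extract the symmetry in one stroke: both sides are the coefficient of $X^p$ in the unique Lagrange interpolation polynomial through the $p+1$ nodes, and that polynomial is manifestly independent of any ordering.

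The paper instead proceeds by induction on $p$, reducing a general permutation first to the case $\sigma(1)=1$ (handled by the inductive hypothesis on $\Delta_{p-1}$), then to transpositions $(1\;j)$, then to the transposition $(p\;p+1)$ (checked directly from the recursive definition of $\Delta_p$), and finally to permutations fixing $p+1$. Only in this last step does the paper invoke Lemma~\ref{lagrangebis} and compare top coefficients in the Newton basis $\{1, X-\eta_1, \ldots, \prod_{j=1}^{p}(X-\eta_j)\}$ --- essentially your argument, but restricted to that special class of permutations. Your route bypasses the combinatorial reduction entirely and applies the leading-coefficient identification to an arbitrary $\sigma$ at once; the paper's approach gains nothing over yours and is noticeably longer.
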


\begin{proof}

We prove the lemma by induction on $p\geq0$.
It is obvious for $p=0$ and $p=1$. Let be $p\geq2$ and
$\sigma\in\mathfrak{S}_{p+1}$. First,
assume that
$\sigma(1)=1$. Then by induction
\begin{eqnarray*}
\Delta_{p,(\eta_{\sigma(p)},\ldots,\eta_{\sigma(1)})}(h)
\left(\eta_{\sigma(p+1)}\right)
& = &
\Delta_{p,(\eta_{\sigma(p)},\ldots,\eta_{\sigma(2)},\eta_1)}(h)
\left(\eta_{\sigma(p+1)}\right)
\\
& = &
\Delta_{p-1,(\eta_{\sigma(p)},\ldots,\eta_{\sigma(2)})}
\left[
\zeta\mapsto
\Delta_{1,\eta_1}(h)(\zeta)
\right]
\left(\eta_{\sigma(p+1)}\right)
\\
& = &
\Delta_{p-1,(\eta_p,\ldots,\eta_2)}
\left[
\zeta\mapsto
\Delta_{1,\eta_1}(h)(\zeta)
\right]
\left(\eta_{p+1}\right)
\\
& = &
\Delta_{p,(\eta_p,\ldots,\eta_1)}(h)
\left(\eta_{p+1}\right)
\,.
\end{eqnarray*}

Now one can assume that $\sigma(1)\neq1$ and consider the transposition
$\tau=(1\;\sigma(1))$. Then
$(\tau\sigma)(1)=1$ and 
\begin{eqnarray*}
\Delta_{p,(\eta_{(\tau\sigma)(p)},\ldots,\eta_{(\tau\sigma)(1)})}(h)
\left(\eta_{(\tau\sigma)(p+1)}\right)
& = &
\Delta_{p,(\eta_p,\ldots,\eta_1)}(h)
\left(\eta_{p+1}\right)
\,.
\end{eqnarray*}
Now let assume that the lemma is also proved for all transposition
$(1\;j),\,2\leq j\leq p+1$. Then
\begin{eqnarray*}
\Delta_{p,(\eta_{\sigma(p)},\ldots,\eta_{\sigma(1)})}(h)
\left(\eta_{\sigma(p+1)}\right)
& = &
\Delta_{p,(\eta_{\tau[(\tau\sigma)(p)]},\ldots,\eta_{\tau[(\tau\sigma)(1)]})}(h)
\left(\eta_{\tau[(\tau\sigma)(p+1)]}\right)
\\
& = &
\Delta_{p,(\eta_{(\tau\sigma)(p)},\ldots,\eta_{(\tau\sigma)(1)})}(h)
\left(\eta_{(\tau\sigma)(p+1)}\right)
\\
& = &
\Delta_{p,(\eta_p,\ldots,\eta_1)}(h)
\left(\eta_{p+1}\right)
\end{eqnarray*}
and the lemma will be proved. So it is sufficient to
prove it for any $\tau=(1\;j),\,2\leq j\leq p+1$.
On the other hand, 
\begin{eqnarray*}
\Delta_{p,(\eta_p,\ldots,\eta_1)}(h)
\left(\eta_{p+1}\right)
& = &
\frac{
\Delta_{p-1,(\eta_{p-1},\ldots,\eta_{1})}(h)
\left(\eta_{p}\right)
-
\Delta_{p-1,(\eta_{p-1},\ldots,\eta_{1})}(h)
\left(\eta_{p+1}\right)
}{\eta_{p}-\eta_{p+1}}
\\
& = &
\Delta_{p,(\eta_{p+1},\eta_{p-1},\ldots,\eta_1)}(h)
\left(\eta_{p}\right)
\,,
\end{eqnarray*}
then it is also true for
$\tau_p:=(p\;p+1)$.

Let assume that it is also true for any
permutation that fixes $p+1$. Then it will be true
for all $(1\;j)$, $2\leq j\leq p$, also for
$(1\;p+1)=\tau_p(1\;p)\tau_p$
and the proof will be achieved.
\bigskip

Finally let be
$\sigma\in\mathfrak{S}_{p+1}$ such that
$\sigma(p+1)=p+1$. Then
$\prod_{j=1}^p
\left(X-\eta_{\sigma(j)}\right)
=
\prod_{j=1}^p(X-\eta_j)$
and
\begin{eqnarray*}
\sum_{q=0}^{p}
\prod_{j=1}^q\left(X-\eta_{\sigma(j)}\right)
\Delta_{q,(\eta_{\sigma(q)},\ldots,\eta_{\sigma(1)})}(h)
\left(\eta_{\sigma(q+1)}\right)
& = &
\;\;\;\;\;\;\;\;\;\;\;\;\;\;\;\;\;\;\;\;\;\;\;\;\;\;\;\;\;\;\;\;\;\;\;\;\;\;\;\;\;\;\;\;\;\;
\end{eqnarray*}
\begin{eqnarray*}
& = &
\sum_{q=0}^{p-1}
\left(
\prod_{j=1}^q\left(X-\eta_{\sigma(j)}\right)
\Delta_{q,(\eta_{\sigma(q)},\ldots,\eta_{\sigma(1)})}(h)
\left(\eta_{\sigma(q+1)}\right)
\right)
\\
& &
+
\prod_{j=1}^p(X-\eta_{j})
\Delta_{p,(\eta_{\sigma(p)},\ldots,\eta_{\sigma(1)})}(h)
\left(\eta_{\sigma(p+1)}\right)
\,.
\end{eqnarray*}
Since for all $q=0,\ldots,p-1$, the family
$\{1,X-\eta_1,\ldots,(X-\eta_1)\cdots(X-\eta_q)\}$
is a basis of
$\mathbb{C}_q[X]=\{P\in\C[X],\;\deg P\leq q\}$, one has
\begin{eqnarray*}
\prod_{j=1}^q(X-\eta_{\sigma(j)})
& = &
\sum_{l=0}^qc_{q,l}
\prod_{j=1}^l(X-\eta_j)
\,,
\end{eqnarray*}
then
\begin{eqnarray*}
\sum_{q=0}^{p}
\prod_{j=1}^q\left(X-\eta_{\sigma(j)}\right)
\Delta_{q,(\eta_{\sigma(q)},\ldots,\eta_{\sigma(1)})}(h)(\eta_{\sigma(q+1)})
& = &
\;\;\;\;\;\;\;\;\;\;\;\;\;\;\;\;\;\;\;\;\;\;\;\;\;\;\;\;\;\;\;\;\;\;\;
\;\;\;\;\;\;\;\;\;\;\;\;\;
\end{eqnarray*}
\begin{eqnarray*}
\;\;\;\;\;\;\;\;\;\;\;\;\;\;\;\;\;\;
& = &
\sum_{q=0}^{p-1}
C_q
\prod_{j=1}^q(X-\eta_j)
+
\prod_{j=1}^p(X-\eta_{j})
\Delta_{p,(\eta_{\sigma(p)},\ldots,\eta_{\sigma(1)})}(h)
\left(\eta_{\sigma(p+1)}\right)
\,.
\end{eqnarray*}
On the other hand, one has by Lemma~\ref{lagrangebis}
\begin{eqnarray*}
\sum_{q=0}^{p}
\prod_{j=1}^q\left(X-\eta_{\sigma(j)}\right)
\Delta_{q,(\eta_{\sigma(q)},\ldots,\eta_{\sigma(1)})}(h)
\left(\eta_{\sigma(q+1)}\right)
& = &
\;\;\;\;\;\;\;\;\;\;\;\;\;\;\;\;\;\;\;\;\;\;\;\;\;\;\;\;\;\;
\;\;\;\;\;\;\;\;\;\;\;\;\;\;\;
\end{eqnarray*}
\begin{eqnarray*}
\;\;\;\;\;\;
& = &
\sum_{q=1}^{p+1}
\left(
\prod_{j=1,j\neq q}^{p+1}
\frac{X-\eta_{\sigma(j)}}{\eta_{\sigma(q)}-\eta_{\sigma(j)}}
\right)
h\left(\eta_{\sigma(q)}\right)
\;=\;
\sum_{q=1}^{p+1}
\left(
\prod_{j=1,j\neq q}^{p+1}
\frac{X-\eta_j}{\eta_q-\eta_j}
\right)
h(\eta_q)
\\
& = &
\sum_{q=0}^{p-1}
\prod_{j=1}^q(X-\eta_j)
\Delta_{q,(\eta_q,\ldots,\eta_1)}(h)(\eta_{q+1})
+\prod_{j=1}^p(X-\eta_j)
\Delta_{p,(\eta_p,\ldots,\eta_1)}(h)(\eta_{p+1})
\,.
\end{eqnarray*}
Since the family
$\left\{1\,,X-\eta_1,\,(X-\eta_1)(X-\eta_2),\ldots,
(X-\eta_1)\cdots(X-\eta_{p})\right\}$
is a basis of
$\mathbb{C}_{p}[X]$, it follows that
\begin{eqnarray*}
\Delta_{p,(\eta_{\sigma(p)},\ldots,\eta_{\sigma(1)})}(h)
\left(\eta_{\sigma(p+1)}\right)
& = &
\Delta_{p,(\eta_p,\ldots,\eta_1)}(h)(\eta_{p+1})
\end{eqnarray*}
and the lemma is proved.

\end{proof}

\subsection{About the formula $E_N(\cdot,\eta)$}\label{uniquEN}

In this part we want to justify what we mean when we claim
that the formula $E_N(f;\eta)$ is the canonical interpolation formula
for any $f\in\mathcal{O}\left(\C^2\right)$.
We set, for all $p\geq1$,
\begin{eqnarray}
w_p(z) & := &
\frac{z_2+\overline{\eta_p}z_1}{1+|\eta_p|^2}
\,.
\end{eqnarray}
Then the formula $E_N(f;\eta)$ can be written as
\begin{eqnarray*}
E_N(f;\eta)(z)
& = &
\sum_{p=1}^N
\prod_{j=p+1}^N(z_1-\eta_jz_2)
\sum_{q=p}^N
\frac{1+\eta_p\overline{\eta_q}}{1+|\eta_q|^2}
\frac{1}{\prod_{j=p,j\neq q}^N(\eta_q-\eta_j)}
\frac{
\left[
f\left(
\eta_qw_q(z),w_q(z)
\right)
\right]_{N-p}
}{w_q(z)^{N-p}}
\,,
\end{eqnarray*}
where $[h]_{N-p}$ is the truncation of $h$
at order $N-p$, ie
$[h]_{N-p}(w)=\sum_{m\geq N-p}
\frac{w^m}{m!}h^{(m)}(0)$.

On the other hand, notice that the point
$\left(\eta_qw_q(z),w_q(z)\right)$ is the orthogonal projection
of $z$ on the line $\{z_1-\eta_qz_2=0\}$ with respect to
the hermitian scalar product on $\C^2$, since
$\left(\eta_qw_q(z),w_q(z)\right)=<z,\overline{u_q}>u_q$, with
$u_q:=(\eta_q,1)/\sqrt{1+|\eta_q|^2}$ being a normalized director vector
of $\{z_1-\eta_qz_2=0\}$. In particular, $z\in\C^2$ being given,
$f\left(\eta_qw_q(z),w_q(z)\right)$ is a naturel way to use
the restriction $f|_{\{z_1=\eta_qz_2\}}$.

Finally, on the expression of $E_N(f;\eta)$ appear
derivatives of the restrictions of $f$ of order at most
$\max(N-2,0)$ since $p\geq1$ and
\begin{eqnarray*}
\left[
f\left(
\eta_qw_q(z),w_q(z)
\right)
\right]_{N-p}
& = &
f\left(
\eta_qw_q(z),w_q(z)
\right)
-
\sum_{p=0}^{N-p-1}
\frac{w_q(z)^{m}}{m!}
\frac{\partial^m}{\partial v^m}|_{v=0}
(f(\eta_qv,v))
\,.
\end{eqnarray*}

Now we can give the following result about the fact that
the formula $E_N(f;\eta)$ is a canonical interpolation formula.

\begin{lemma}\label{lemmEN}

For all $N\geq1$, $E_N(f;\eta)$ is essentially the unique interpolation formula
betwen the interpolation formula that fix
$\C_{N-1}[z_1,z_2]$ and have the following expression
\begin{eqnarray*}
F=\sum_{q=1}^N
F_q(z)
\,,
\end{eqnarray*}
where
$F_q(z)=
\sum_{k+l\leq N-1}C_{q,k,l}z_1^kz_2^l$,
with $C_{q,k,l}$
being an operator on the space
$\mathcal{O}(\C)_q:=
\left(
\mathcal{O}\left(\C^2\right)
\right)|_{\{z_1=\eta_qz_2\}}$,
of order at most $\max(N-2,0)$.

\end{lemma}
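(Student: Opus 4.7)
The plan is to handle the lemma in two parts: first verify that $E_N(f;\eta)$ itself belongs to the described class of formulas and fixes $\C_{N-1}[z_1,z_2]$; second, establish uniqueness (modulo the implicit redundancy of ``essentially'') by a linear-algebra argument. For the first part, I would start from the rewriting of $E_N(f;\eta)$ in terms of $w_q(z)=(z_2+\overline{\eta_q}z_1)/(1+|\eta_q|^2)$ displayed just above the lemma. The quantity $[f(\eta_qw_q(z),w_q(z))]_{N-p}/w_q(z)^{N-p}$ is the Taylor remainder of $v\mapsto f(\eta_qv,v)$ of order $N-p$ divided by $w_q(z)^{N-p}$, so regrouping by $q$ expresses $E_N(f;\eta)$ as a $z$-polynomial combination of $w_q(z)$ and $\prod_{j=p+1}^N(z_1-\eta_jz_2)$ whose coefficients are linear functionals of $f_q:=f|_{\{z_1=\eta_qz_2\}}$. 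The derivative identity stated just before the lemma then makes explicit that, apart from the evaluation $f_q\mapsto f_q(w_q(z))$, only the derivatives $\partial_v^m|_{v=0}[f_q]$ with $0\leq m\leq N-2$ appear, so the coefficients $C_{q,k,l}$ are operators on $\mathcal{O}(\C)_q$ of order at most $\max(N-2,0)$. Invariance on $\C_{N-1}[z_1,z_2]$ follows from the master identity~(\ref{relation}): for $P\in\C_{N-1}[z_1,z_2]$ the tail $\sum_{k+l\geq N}a_{k,l}z_1^kz_2^l$ vanishes, and $R_N(P;\eta)\equiv0$ as well because the defining sum in~(\ref{defRN}) is indexed by $k+l\geq N$ while $P$ has Taylor coefficients supported on $k+l\leq N-1$.

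For the uniqueness part, suppose $F=\sum_q F_q$ is any other formula in the stated family with $F(P)=P$ for $P\in\C_{N-1}[z_1,z_2]$, and set $G:=F-E_N$. Then $G$ still lies in the family and annihilates every polynomial of degree $\leq N-1$. The key observation is that the restriction map $\rho\colon\C_{N-1}[z_1,z_2]\to\bigoplus_{q=1}^N\C_{N-1}[v]$, $P\mapsto(P(\eta_qv,v))_q$, is injective, since a polynomial of degree $\leq N-1$ vanishing on $N$ distinct lines through the origin must vanish identically. Writing each coefficient $C_{q,k,l}$ of $G$ as a linear combination of the derivations $\partial_v^m|_{v=0}$, $0\leq m\leq N-2$, and imposing $G(z_1^az_2^b)=0$ coefficient by coefficient for all $a+b\leq N-1$, yields a linear system on the $C_{q,k,l}$ whose solution space corresponds exactly to functionals on $\bigoplus_q\C_{N-1}[v]$ that vanish on the image of $\rho$. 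This residual freedom is precisely the content of the word ``essentially''; modulo it, $F$ must agree with $E_N$.

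The main obstacle I anticipate is this last step: matching the kernel of the evaluation map on the coefficients $C_{q,k,l}$ against the cokernel of $\rho$. This is a finite-dimensional linear-algebra problem, but it requires careful bookkeeping of dimensions to rule out spurious degrees of freedom and to make precise the sense in which two distinct representations of the same functional on polynomial restrictions give rise to the ``same'' formula.
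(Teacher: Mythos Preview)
Your first part --- verifying that $E_N$ lies in the stated class and fixes $\C_{N-1}[z_1,z_2]$ via relation~(\ref{relation}) --- is correct and essentially matches the paper's preamble to the lemma.

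The uniqueness argument, however, diverges from the paper and contains a genuine gap. You write each coefficient $C_{q,k,l}$ of $G$ as a finite linear combination of the jets $\partial_v^m|_{v=0}$, $0\le m\le N-2$, and then do finite-dimensional linear algebra. But an ``operator of order at most $N-2$'' on $\mathcal{O}(\C)_q$ is not restricted to jets at the origin: point evaluations $f_q\mapsto f_q(a)$ at arbitrary $a\in\C$ are order~$0$, and indeed $E_N$ itself involves evaluation of $f_q$ at the moving point $w_q(z)$, not just derivatives at $0$. So the admissible family of coefficient operators is infinite-dimensional, and the injectivity of your restriction map $\rho$ on $\C_{N-1}[z_1,z_2]$ does not by itself pin down $G$ as an operator on all of $\mathcal{O}(\C^2)$.

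The paper closes this hole by a preliminary step you omit entirely: it argues from a norm constraint (essentially that the evaluation point $(\eta_q v_q(z),v_q(z))$ must satisfy $\|(\eta_q v_q(z),v_q(z))\|\le\|z\|$) combined with the interpolation condition on the line $\{z_1=\eta_q z_2\}$ that any admissible $F_q$ must evaluate $f_q$ precisely at $v_q(z)=w_q(z)$. Only after this reduction to a finite-parameter family does the paper proceed, and it does so not by linear algebra but by induction on $N$: it tests $F_N-E_N$ on the Lagrange polynomials $L_q(z)=\prod_{j\ne q}(z_1-\eta_j z_2)/(\eta_q-\eta_j)$, obtains a polynomial identity in $z$, and exploits unique factorization in $\C[z_1,z_2]$ (divisibility by the irreducible $z_1-\eta_{N+1}z_2$) to force the successive coefficients to vanish. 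The word ``essentially'' is tied to the degenerate case $1+\overline{\eta_q}\,\eta_{N+1}=0$, in which $w_q(z)$ becomes proportional to $z_1-\eta_{N+1}z_2$ and a term of $E_N$ drops out anyway.

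In short: the paper's route is \emph{norm constraint $\Rightarrow$ $v_q=w_q$}, then \emph{induction $+$ factorization in $\C[z_1,z_2]$}; yours is abstract linear algebra on jets at~$0$, which cannot start until the preliminary reduction to a finite-dimensional family has been carried out.
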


\begin{proof}

First, in the expression of such an operator $F_q$ on
$\mathcal{O}(\C)_q$ and any 
$f\in\mathcal{O}\left(\C^2\right)$,
there must appear parts of
$f(\eta_qv_q(z),v_q(z))$ where 
$v_q(z)=a_qz_1+b_qz_2$ is such that
$\|z\|\geq\|(\eta_qv_q(z),v_q(z))\|
=|v_q(z)|\sqrt{1+|\eta_q|^2}$,
ie
$|v_q(z)|\leq\|z\|/\sqrt{1+|\eta_q|^2}$.
This yields to
\begin{eqnarray}\label{lemmEN1}
\sup_{\|z\|\leq1}|a_qz_1+b_qz_2|
\;=\;
\sqrt{|a_q|^2+|b_q|^2}
& \leq &
\frac{1}{\sqrt{1+|\eta_q|^2}}
\,.
\end{eqnarray}
On the other hand, the condition of interpolation must satisfy
$F(f)_{|\{z_1=\eta_qz_2\}}=f_{|\{z_1=\eta_qz_2\}}$,
then
\begin{eqnarray}\label{lemmEN2}
1\;=\;
|v_q(\eta_q,1)|
\;=\;
|\eta_qa_q+b_q|
& \leq &
\sqrt{|a_q|^2+|b_q|^2}
\sqrt{1+|\eta_q|^2}
\,.
\end{eqnarray}
It folows from~(\ref{lemmEN1}) and~(\ref{lemmEN2})
that
$\sqrt{|a_q|^2+|b_q|^2}=1/\sqrt{1+|\eta_q|^2}$ and
$(a_q,b_q)=\lambda(\overline{\eta_q},1)$,
$\lambda\in\C$. Then
$|\lambda|=1/(1+|\eta_q|^2)$ and
\begin{eqnarray}\label{lemmENvq}
v_q(z) & = &
\omega_q\frac{\overline{\eta_q}z_1+z_2}{1+|\eta_q|^2}
\,,
\;
|\omega_q|=1
\,.
\end{eqnarray}
Finally, the condition
$(\eta_qv_q(\eta_q,1),v_q(\eta_q,1))
=(\eta_q,1)$ yields to $\omega_q=1$, then
\begin{eqnarray}\label{lemmENvqwq}
v_q(z) & = &
\frac{\overline{\eta_q}z_1+z_2}{1+|\eta_q|^2}
\;=\;
w_q(z)
\,.
\end{eqnarray}

Next, for any $P\in\C_{N-1}[z]$, in particular,
for all $z_2\in\C\setminus\{0\}$, $P(\cdot,z_2)\in\C[z_1]$.
It follows by Lemma~\ref{lagrangebis} that
\begin{eqnarray}\nonumber
P(z) & = &
\sum_{p=1}^N
\prod_{j=1,j\neq p}^N
\frac{z_1-\eta_jz_2}{\eta_pz_2-\eta_jz_2}
\,P(\eta_pz_2,z_2)
\\\label{lemmENP1}
& = &
\sum_{p=1}^N
\prod_{j=p+1}^N(z_1-\eta_jz_2)
\Delta_{N-p,(\eta_{p+1}z_2,\ldots,\eta_Nz_2)}
\left(
\zeta\mapsto
P(\zeta,z_2)
\right)\left(\eta_pz_2\right)
\,.
\end{eqnarray}
On the other hand, by Lemma~\ref{Deltanal}, for all $p=1,\ldots,N$,
$\Delta_{N-p,(\eta_{p+1}z_2,\ldots,\eta_Nz_2)}
\left(P(\cdot,z_2)\right)\left(\eta_pz_2\right)
\in\C[z_2]$. Moreover,
\begin{eqnarray}\nonumber
\deg
\Delta_{N-p,(\eta_{p+1}z_2,\ldots,\eta_Nz_2)}
\left(P(\cdot,z_2)\right)\left(\eta_pz_2\right)
& \leq &
\deg_{z_1}P-(N-p)+\deg_{z_2}P
\\\label{lemmENdeg}
& \leq &
p-1\,.
\end{eqnarray}
It follows from~(\ref{lemmENvqwq}), (\ref{lemmENP1}) and~(\ref{lemmENdeg})
that one can write $F$ as
\begin{eqnarray}\nonumber
F & = &
\sum_{q=1}^N
\sum_{p=1}^N
\prod_{j=p+1}^N
(z_1-\eta_jz_2)
\Delta_{N-p,(\eta_{p+1}z_2,\ldots,\eta_Nz_2)}
\left(F_q(\cdot,z_2)\right)\left(\eta_pz_2\right)
\\\label{lemmENF}
& = &
\sum_{q,p=1}^N
Q_{q,p}(z_2)
\left(
\prod_{j=p+1}^N
(z_1-\eta_jz_2)
\right)
F_{q,p}
\,,
\end{eqnarray}
where $Q_{q,p}\in\C[z_2]$ and,
for all $f\in\mathcal{O}\left(\C^2\right)$,
\begin{eqnarray}
& &
F_{q,p}(f)\;=\;
\sum_{l=0}^{d_{q,p}}
c_{N,q,p,l}w_q(z)^l
\sum_{m\geq r_{q,p}}
\frac{w_q(z)^{m-r_{q,p}}}{m!}
\frac{\partial^m}{\partial v^m}|_{v=0}
\left[
f(\eta_qv,v)
\right]
\,,
\end{eqnarray}
with
$r_{q,p}\leq N-1$, $\forall\,l=0,\ldots,d_{q,p}$
(since $F_{q,p}$ is of order at most $N-2$).
\bigskip

Now, if $N\geq1$ and $F_N$ is such an operator,
the operator $F_N-E_N$ can still be written as
the expression~(\ref{lemmENF}). Moreover,
$(F_N-E_N)_{|\C_{N-1}[z]}=0$ since
$E_N$ and $F_N$ fix $\C_{N-1}[z]$.
On the other hand,
let consider the Lagrange monomials
\begin{eqnarray}\label{Lq}
L_q(z) & := &
\prod_{j=1,j\neq q}^N
\frac{z_1-\eta_jz_2}{\eta_q-\eta_j}
\;,\;
1\leq q\leq N\,.
\end{eqnarray}
In particular, $\deg L_q\leq N-1$ and
${L_q}_{|\{z_1-\eta_pz_2\}}={\bf 1}_{p=q}z_2^{N-1}$.
Then one has,
for all $N\geq1$ and $q=1,\ldots,N$,
\begin{eqnarray}\nonumber
0
\;=\;
(F_N-E_N)(L_q)
& = &
\sum_{p=1}^N
Q_{q,p}(z_2)
\prod_{j=p+1}^N
(z_1-\eta_jz_2)
\times
\\\nonumber
& &
\times
\sum_{l=0}^{d_{q,p}}
c_{N,q,p,l}w_q(z)^l
\sum_{m\geq r_{q,p}}
\frac{w_q(z)^{m-r_{q,p}}}{m!}
\frac{\partial^m}{\partial v^m}|_{v=0}
\left[v^{N-1}\right]
\\\label{lemmENFN}
& = &
\sum_{p=1}^N
Q_{q,p}(z_2)
\prod_{j=p+1}^N
(z_1-\eta_jz_2)
\sum_{l=0}^{d_{q,p}}
c_{N,q,p,l}w_q(z)^{l+N-1-r_{q,p}}
\,.
\end{eqnarray}

Now we will prove the lemma
by induction on $N\geq1$.
If $N=1$, (\ref{lemmENFN}) becomes
\begin{eqnarray*}
0
& = &
Q_{1,1}(z_2)
\sum_{l=0}^{d_{1,1}}
c_{1,1,1,l}
w_1(z)^l
\,,\;
\forall\,z\in\C^2
\,,
\end{eqnarray*}
then
$Q_{1,1}=0$ (in this case, $F_1=E_1$) or
$\sum_{l=0}^{d_{1,1}}
c_{1,1,1,l}
w_1(z)^l=0$,
$\forall\,z\in\C^2$ (in this case,
$\sum_{l=0}^{d_{1,1}}
c_{1,1,1,l}
w^l=0$,
$\forall\,w\in\C$, then
$c_{1,1,1,l}=0$,
$\forall\,l=0,\ldots,d_{1,1}$ and
$F_1=E_1$).

Now if the lemma is proved for $N\geq1$ and
we consider $N+1$, then~(\ref{lemmENFN}) becomes,
for all $q=1,\ldots,N+1$,
\begin{eqnarray}\nonumber
0 & = &
(z_1-\eta_{N+1}z_2)
\sum_{p=1}^N
Q_{q,p}(z_2)
\prod_{j=p+1}^N
(z_1-\eta_jz_2)
\sum_{l=0}^{d_{q,p}}
c_{N+1,q,p,l}w_q(z)^{l+N-r_{q,p}}
\\\label{lemmENFN+1}
& &
+\;
Q_{q,N+1}(z_2)
\sum_{l=0}^{d_{q,N+1}}
c_{N+1,q,N+1,l}
w_q(z)^{l+N-r_{q,N+1}}
\,.
\end{eqnarray}
In particular,
$(z_1-\eta_{N+1}z_2)$ divides
$Q_{q,N+1}(z_2)
\sum_{l=0}^{d_{q,N+1}}
c_{N+1,q,N+1,l}
w_q(z)^{l+N-r_{q,N+1}}$. Then one (and only one) of these
different cases can happen:

\begin{itemize}

\item

$(z_1-\eta_{N+1}z_2)$ divides
$Q_{q,N+1}(z_2)$, then
$Q_{q,N+1}(z_2)=0$ and~(\ref{lemmENFN+1})
becomes
\begin{eqnarray*}
0 & = &
\sum_{p=1}^N
Q_{q,p}(z_2)
\prod_{j=p+1}^N
(z_1-\eta_jz_2)
\sum_{l=0}^{d_{q,p}}
c_{N+1,q,p,l}w_q(z)^{l+N-r_{q,p}}
\,,
\end{eqnarray*}
that yields by induction to
$F_{N+1}=E_{N+1}$.

\item

Otherwise (since $z_1-\eta_{N+1}z_1$
is irreducible and
$\C[z_1,z_2]$ is factorial),
$(z_1-\eta_{N+1}z_2)$ divides
$\sum_{l=0}^{d_{q,p}}
c_{N+1,q,p,l}w_q(z)^{l+N-r_{q,p}}$.
If $\sum_{l=0}^{d_{q,p}}
c_{N+1,q,p,l}X^{l}=0$,
then~(\ref{lemmENFN+1}) and the induction yield to
$F_{N+1}=E_{N+1}$.

\item

Else
$\sum_{l=0}^{d_{q,p}}
c_{N+1,q,p,l}w_q(z)^{l+N-r_{q,p}}
=w_q(z)^{m'}
\sum_{l=0}^{d'}
c'_lw_q(z)^l$, with $c'_0\neq0$,
and
$(z_1-\eta_{N+1}z_2)$ divides
$\sum_{l=0}^{d'}
c'_lw_q(z)^l$. Then
$\exists\,\widetilde{Q}\in\C[z_1,z_2]$ such that
\begin{eqnarray*}
\sum_{l=0}^{d'}
c'_lw_q(z)^l
& = &
(z_1-\eta_{N+1}z_2)
\widetilde{Q}(z)
\end{eqnarray*}
and $z=0$ yields to
$c'_0=0$, which is impossible.

\item

Finally, $(z_1-\eta_{N+1}z_2)$ must divide
$w_q(z)^{m'}$ then divide
$w_q(z)=\frac{\overline{\eta_q}z_1+z_2}{1+|\eta_q|^2}$
that is irreducible too.
It follows that they are proportional then
$1+\overline{\eta_q}\eta_{N+1}=0$.
In this case, the part
\begin{eqnarray*}
\frac{1+\eta_{N+1}\overline{\eta_q}}{1+|\eta_q|^2}
\frac{1}
{\prod_{j=N+1,j\neq q}^{N+1}(\eta_q-\eta_j)}
\sum_{m\geq0}
\left(
\frac{z_2+\overline{\eta_q}z_1}{1+|\eta_q|^2}
\right)^{m}
\frac{1}{m!}
\frac{\partial^m}{\partial v^m}|_{v=0}
[f(\eta_qv,v)]
\end{eqnarray*}
will disappear in the expression~(\ref{defEN})
of $E_N$. It follows that $E_N$ will be the most natural choice
for the interpolation formula and this achieves the induction.

\end{itemize}

\end{proof}

\section{Proof of Theorem \ref{equivbounded}}\label{proofequivbounded}

In this part we will assume that the set
$\{\eta_j\}_{j\geq1}$ is bounded, what we will
write as
\begin{eqnarray}
\|\eta\|_{\infty}
\;:=\;
\sup_{j\geq1}|\eta_j|
& < &
+\infty
\,.
\end{eqnarray}

\begin{remark}\label{criterbis}

We will see that the condition (\ref{criter})
is equivalent to the existence of
$R_{\eta}$ such that, for all
$p,q,s\geq0$ with $s\leq q$,
\begin{eqnarray}
\left|
\Delta_{p,(\eta_p,\ldots,\eta_1)}
\left(
\frac{\overline{\zeta}^s}{(1+|\zeta|^2)^q}
\right)
\left(\eta_{p+1}\right)
\right|
& \leq &
R_{\eta}^{p+q}
\,.
\end{eqnarray}

\end{remark}

In all the following, we will mean the
Taylor expansion of any function
$f\in\mathcal{O}\left(\C^2\right)$
(that absolutely converges in any compact
subset $K\subset\C^2$) by
\begin{eqnarray}
f(z) & = &
\sum_{k,l\geq0}a_{k,l}z_1^kz_2^l
\,.
\end{eqnarray}

\bigskip

\subsection{Condition~(\ref{criter}) is necessary}

We begin with this result.

\begin{lemma}\label{condnec1}

For all
$f\in\mathcal{O}\left(\mathbb{C}^2\right)$,
$N\geq1$ and $k_1\geq N$,
\begin{eqnarray*}
\frac{1}{k_1!}
\frac{\partial^{k_1}}{\partial z_1^{k_1}}|_{z=0}
[R_N(f;\eta)(z)]
& = &
\Delta_{N-1,(\eta_{N-1},\ldots,\eta_1)}
\left(
\left(
\frac{\overline{\zeta}}{1+|\zeta|^2}
\right)^{k_1-N+1}
\sum_{k+l=k_1}a_{k,l}\zeta^k
\right)(\eta_N)
\,.
\end{eqnarray*}

\end{lemma}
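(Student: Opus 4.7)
The plan is to compute the coefficient of $z_1^{k_1}$ in the Taylor expansion of $R_N(f;\eta)$ at $z=0$ directly from the explicit expression~(\ref{defRN1}), and then recognize the resulting sum as a divided difference via Lemma~\ref{lagrangebis}.

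First I would observe that, although $R_N(f;\eta)$ involves the conjugates $\overline{\eta_p}$, it is holomorphic in $z$, so reading off $\frac{1}{k_1!}\partial^{k_1}_{z_1}|_{z=0}R_N(f;\eta)$ amounts to extracting the coefficient of $z_1^{k_1}$ after setting $z_2=0$. For each $p=1,\ldots,N$, one has
\begin{eqnarray*}
\prod_{j=1,j\neq p}^N\frac{z_1-\eta_jz_2}{\eta_p-\eta_j}\Big|_{z_2=0}
 & = & \frac{z_1^{N-1}}{\prod_{j\neq p}(\eta_p-\eta_j)}\,,
\qquad
w_p(z_1,0)\;=\;\frac{\overline{\eta_p}\,z_1}{1+|\eta_p|^2}\,.
\end{eqnarray*}
Thus the $p$-th summand of $R_N(f;\eta)$ at $z_2=0$ is
\begin{eqnarray*}
\frac{z_1^{N-1}}{\prod_{j\neq p}(\eta_p-\eta_j)}
\sum_{k+l\geq N}a_{k,l}\eta_p^k
\left(\frac{\overline{\eta_p}}{1+|\eta_p|^2}\right)^{k+l-N+1}z_1^{k+l-N+1}\,,
\end{eqnarray*}
where every term is of total degree $k+l\geq N$ in $z_1$.

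Next, since $k_1\geq N$, the coefficient of $z_1^{k_1}$ in this series arises precisely from the pairs $(k,l)$ with $k+l=k_1$, yielding
\begin{eqnarray*}
\frac{1}{k_1!}\frac{\partial^{k_1}}{\partial z_1^{k_1}}\Big|_{z=0}R_N(f;\eta)(z)
 & = &
\sum_{p=1}^N\frac{h(\eta_p)}{\prod_{j=1,j\neq p}^N(\eta_p-\eta_j)}\,,
\end{eqnarray*}
with
\begin{eqnarray*}
h(\zeta) & := &
\left(\frac{\overline{\zeta}}{1+|\zeta|^2}\right)^{k_1-N+1}
\sum_{k+l=k_1}a_{k,l}\zeta^k\,.
\end{eqnarray*}

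Finally I would apply Lemma~\ref{lagrangebis} to identify this sum as the required divided difference. Indeed, the polynomial
\begin{eqnarray*}
P(X) & = & \sum_{p=1}^N\prod_{j=1,j\neq p}^N\frac{X-\eta_j}{\eta_p-\eta_j}\,h(\eta_p)
\end{eqnarray*}
has degree at most $N-1$, and its coefficient of $X^{N-1}$ equals $\sum_p h(\eta_p)/\prod_{j\neq p}(\eta_p-\eta_j)$. By Lemma~\ref{lagrangebis}, however, $P(X)=\sum_{p=0}^{N-1}\prod_{j=1}^p(X-\eta_j)\,\Delta_{p,(\eta_p,\ldots,\eta_1)}(h)(\eta_{p+1})$, whose coefficient of $X^{N-1}$ is $\Delta_{N-1,(\eta_{N-1},\ldots,\eta_1)}(h)(\eta_N)$. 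Equating the two expressions for this leading coefficient yields the stated identity. No step presents a real obstacle; the only mild subtlety is justifying that the termwise extraction of the $z_1^{k_1}$-coefficient is legitimate, which follows from the absolute convergence on compacts of the series defining $R_N(f;\eta)$.
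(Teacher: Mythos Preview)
Your proof is correct. It differs from the paper's in the order of operations: the paper first rewrites $R_N$ in divided-difference form via Lemma~\ref{lagrangebis} (obtaining the intermediate identity~(\ref{lagrangebisRN})), then expands $\prod_{j=1}^p(z_1-\eta_jz_2)$ with elementary symmetric functions, differentiates, and uses that only the term $r=N-1$ survives at $z=0$; a Leibniz-rule computation then finishes. You instead keep the Lagrange form of $R_N$, set $z_2=0$ immediately to reduce each Lagrange factor to $z_1^{N-1}/\prod_{j\neq p}(\eta_p-\eta_j)$, extract the $z_1^{k_1}$-coefficient, and only at the end invoke Lemma~\ref{lagrangebis} to identify $\sum_p h(\eta_p)/\prod_{j\neq p}(\eta_p-\eta_j)$ as the leading coefficient $\Delta_{N-1}(h)(\eta_N)$. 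The two routes use the same ingredients, but yours is a bit shorter since it avoids the symmetric-function expansion and the Leibniz step; on the other hand, the paper's derivation of~(\ref{lagrangebisRN}) is reused later (in Subsection~\ref{proofsuperbebounded}), so its detour is not wasted.
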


\begin{proof}

First, we claim that
\begin{eqnarray}\label{lagrangebisRN}
& &
R_N(f;\eta)(z)
=
\sum_{p=0}^{N-1}
z_2^{N-1-p}
\prod_{j=1}^p
(z_1-\eta_jz_2)
\Delta_{p,(\eta_p,\ldots,\eta_1)}
\left(
\zeta\mapsto
r_N(\zeta,z)
\right)
\left(\eta_{p+1}\right)
,
\end{eqnarray}
with
\begin{eqnarray}\label{rN}
r_N(\zeta,z) & := &
\sum_{k+l\geq N}
a_{k,l}\zeta^k
\left(
\frac{z_2+\overline{\zeta}z_1}{1+|\zeta|^2}
\right)^{k+l-N+1}
\\\nonumber
& = &
\sum_{m\geq N}
\left(
\frac{z_2+\overline{\zeta}z_1}{1+|\zeta|^2}
\right)^{m-N+1}
\sum_{k+l=m}a_{k,l}\zeta^k
\,.
\end{eqnarray}
Indeed, by Lemma~\ref{lagrangebis},
\begin{eqnarray*}
R_N(f;z)(z) & = &
z_2^{N-1}
\sum_{p=1}^N
\prod_{j=1,j\neq p}^N
\frac{z_1/z_2-\eta_j}{\eta_p-\eta_j}
\sum_{k+l\geq N}
a_{k,l}\eta_p^k
\left(
\frac{z_2+\overline{\eta_p}z_1}{1+|\eta_p|^2}
\right)^{k+l-N+1}
\\
& = &
z_2^{N-1}
\sum_{p=0}^{N-1}
\prod_{j=1}^p
(z_1/z_2-\eta_j)
\Delta_{p,(\eta_p,\ldots,\eta_1)}
\left[
\sum_{k+l\geq N}
a_{k,l}\zeta^k
\left(
\frac{z_2+\overline{\zeta}z_1}{1+|\zeta|^2}
\right)^{k+l-N+1}
\right]
\left(\eta_{p+1}\right)
.
\end{eqnarray*}
It follows that
\begin{eqnarray*}
R_N(f;\eta)(z)
& = &
\sum_{p=0}^{N-1}
z_2^{N-1-p}
\prod_{j=1}^p
(z_1-\eta_jz_2)
\Delta_{p,(\eta_p,\ldots,\eta_1)}
(\zeta\mapsto r_N(\zeta,z))
(\eta_{p+1})
\\
& = &
\sum_{p=0}^{N-1}
z_2^{N-1-p}
\sum_{r=0}^p
z_1^r(-1)^{p-r}z_2^{p-r}
\sigma_{p-r}(\eta_1,\ldots,\eta_p)
\Delta_{p}
(r_N(\zeta,z))
\\
& = &
\sum_{r=0}^{N-1}
z_1^rz_2^{N-1-r}
\sum_{p=r}^{N-1}
(-1)^{p-r}
\sigma_{p-r}(\eta_1,\ldots,\eta_p)
\Delta_{p}
(r_N(\zeta,z))
\,,
\end{eqnarray*}
with
$\sigma_r(\eta_1,\ldots,\eta_p)
=
\sum_{1\leq j_1<\cdots<j_r\leq p}
\eta_{j_1}\cdots\eta_{j_r}$.
Then
\begin{eqnarray*}
\frac{1}{k_1!}
\frac{\partial^{k_1}}{\partial z_1^{k_1}}|_{z=0}
[R_N(f;\eta)(z)]
& = &
\;\;\;\;\;\;\;\;\;\;\;\;\;\;\;\;\;\;\;\;\;\;\;\;\;\;\;\;\;\;\;\;\;\;\;\;\;\;\;
\;\;\;\;\;\;\;\;\;\;\;\;\;\;\;\;\;\;\;\;\;\;\;\;\;\;\;\;\;\;\;\;\;\;\;\;\;\;\;
\end{eqnarray*}
\begin{eqnarray*}
& = &
\sum_{r=0}^{N-1}
0^{N-1-r}
\sum_{p=r}^{N-1}
(-1)^{p-r}
\sigma_{p-r}(\eta_1,\ldots,\eta_p)
\frac{1}{k_1!}
\frac{\partial^{k_1}}{\partial z_1^{k_1}}|_{z=0}
\left[
z_1^r
\Delta_{p}
(r_N(\zeta,z))
\right]
\\
& = &
\frac{1}{k_1!}
\frac{\partial^{k_1}}{\partial z_1^{k_1}}|_{z=0}
\left[
z_1^{N-1}
\Delta_{N-1}
(r_N(\zeta,z))
\right]
\\
& = &
\Delta_{N-1,(\eta_{N-1},\ldots,\eta_1)}
\left(
\zeta\mapsto
\frac{1}{k_1!}
\frac{\partial^{k_1}}{\partial z_1^{k_1}}|_{z=0}
\left[
z_1^{N-1}r_N(\zeta,z)
\right]
\right)
(\eta_N)
\,.
\end{eqnarray*}
Since
$k_1\geq N$, one has for all
$\zeta\in\mathbb{C}$
\begin{eqnarray*}
\frac{1}{k_1!}
\frac{\partial^{k_1}}{\partial z_1^{k_1}}|_{z=0}
\left[
z_1^{N-1}r_N(\zeta,z)
\right]
& = &
\;\;\;\;\;\;\;\;\;\;\;\;\;\;\;\;\;\;\;\;\;\;\;\;\;\;\;\;\;\;\;\;\;\;\;\;\;\;\;\;\;
\;\;\;\;\;\;\;\;\;\;\;\;\;\;\;\;\;\;\;\;\;\;\;\;\;\;\;\;\;\;\;\;\;\;\;\;\;\;\;\;\;
\end{eqnarray*}
\begin{eqnarray*}
& = &
\sum_{s=0}^{k_1}
\frac{1}{s!}
\frac{\partial^{s}}{\partial z_1^{s}}|_{z_1=0}
\left(z_1^{N-1}\right)
\frac{1}{(k_1-s)!}
\frac{\partial^{k_1-s}}{\partial z_1^{k_1-s}}|_{z=0}
[r_N(\zeta,z)]
\\
& = &
\frac{1}{(k_1-N+1)!}
\frac{\partial^{k_1-N+1}}{\partial z_1^{k_1-N+1}}|_{z=0}
[r_N(\zeta,z)]
\\
& = &
\sum_{m\geq N}
\sum_{k+l=m}a_{k,l}\zeta^k
\frac{1}{(k_1-N+1)!}
\frac{\partial^{k_1-N+1}}{\partial z_1^{k_1-N+1}}|_{z=0}
\left[
\left(
\frac{z_2+\overline{\zeta}z_1}{1+|\zeta|^2}
\right)^{m-N+1}
\right]
\\
& = &
\sum_{k+l=k_1}a_{k,l}\zeta^k
\left(
\frac{\overline{\zeta}}{1+|\zeta|^2}
\right)^{k_1-N+1}
\,,
\end{eqnarray*}
and the proof is achieved.

\end{proof}

Now we can prove the first sense of Theorem~\ref{equivbounded}.

\begin{proof}

We assume that, for every
$f\in\mathcal{O}(\mathbb{C}^2)$,
$R_N(f;\eta)$ is uniformly bounded on any
compact subset $K\subset\mathbb{C}^2$, ie
\begin{eqnarray*}
\sup_{N\geq1}\;
\sup_{z\in K}
\left|
R_N(f;\eta)(z)
\right|
& \leq &
M(f,K)
\;<\;+\infty\,.
\end{eqnarray*}
In particular,
$\forall\,p\geq0$,
\begin{eqnarray*}
\sup_{z\in \overline{D_2(0,(1,1))}}
\left|
R_{p+1}(f;\eta)(z)
\right|
& \leq &
M(f)
\,.
\end{eqnarray*}
Then for all
$p\geq0$ and $q\geq1$,
\begin{eqnarray*}
\left|
\frac{1}{(p+q)!}
\frac{\partial^{p+q}}{\partial z_1^{p+q}}|_{z=0}
[R_{p+1}(f;\eta)(z)]
\right|
& = &
\left|
\frac{1}{(2i\pi)^2}
\int_{|\zeta_1|=|\zeta_2|=1}
\frac{R_{p+1}(f;\eta)(\zeta_1,\zeta_2)\,d\zeta_1\wedge d\zeta_2}
{\zeta_1^{p+q+1}\zeta_2}
\right|
\\
& \leq &
\sup_{z\in \overline{D_2(0,(1,1))}}
\left|
R_{p+1}(f;\eta)(z)
\right|
\;\leq\;
M(f)\,.
\end{eqnarray*}
Since $p+q\geq p+1$, one can deduce from the above lemma that
\begin{eqnarray*}
\left|
\Delta_{p,(\eta_p,\ldots,\eta_1)}
\left[
\left(
\frac{\overline{\zeta}}{1+|\zeta|^2}
\right)^q
\sum_{k+l=p+q}a_{k,l}\zeta^k
\right](\eta_{p+1})
\right|
& \leq &
M(f)
\,.
\end{eqnarray*}

\bigskip

Now consider any function entire on $\C$,
$h(w)=\sum_{n\geq0}a_nw^n$
and set
$f_h(z):=h(z_2)$. Then
$f_h\in\mathcal{O}
\left(\mathbb{C}^2\right)$ and
for all $p\geq0$, $q\geq1$,
\begin{eqnarray*}
\sum_{k+l=p+q}
a_{k,l}(f_h)\zeta^k
& = &
a_{0,p+q}(f_h)
\;=\;
a_{p+q}
\,.
\end{eqnarray*}
It follows that for all $p\geq0$, $q\geq1$,
\begin{eqnarray*}
\left|
\Delta_{p,(\eta_p,\ldots,\eta_1)}
\left[
\left(
\frac{\overline{\zeta}}{1+|\zeta|^2}
\right)^q
\sum_{k+l=p+q}a_{k,l}(f_h)\zeta^k
\right](\eta_{p+1})
\right|
& = &
\;\;\;\;\;\;\;\;\;\;\;\;\;\;\;\;\;\;\;\;\;\;\;\;\;\;\;\;\;\;
\end{eqnarray*}
\begin{eqnarray*}
& = &
|a_{p+q}|
\left|
\Delta_{p,(\eta_p,\ldots,\eta_1)}
\left[
\left(
\frac{\overline{\zeta}}{1+|\zeta|^2}
\right)^q
\right](\eta_{p+1})
\right|
\;\leq\;
M(h)\;:=M\;(f_h)
\,,
\end{eqnarray*}
then
\begin{eqnarray*}
\sup_{p\geq0,\,q\geq1}
\left\{
|a_{p+q}|^{\frac{1}{p+q}}
\left|
\Delta_{p,(\eta_p,\ldots,\eta_1)}
\left[
\left(
\frac{\overline{\zeta}}{1+|\zeta|^2}
\right)^q
\right](\eta_{p+1})
\right|^{\frac{1}{p+q}}
\right\}
& < &
+\infty
\,.
\end{eqnarray*}
Since $h\in\mathcal{O}(\mathbb{C})$,
$\limsup_{n\rightarrow\infty}
|a_n|^{1/n}=0$. Conversely, if
$(\varepsilon_n)_{n\geq1}$ is any sequence that
converges to $0$, the function
$h_{\varepsilon}(w):=
\sum_{n\geq1}\varepsilon_n^nw^n$
is entire on $\C$ and
\begin{eqnarray}\label{prop1}
\sup_{p\geq0,\,q\geq1}
\left\{
|\varepsilon_{p+q}|
\left|
\Delta_{p,(\eta_p,\ldots,\eta_1)}
\left[
\left(
\frac{\overline{\zeta}}{1+|\zeta|^2}
\right)^q
\right](\eta_{p+1})
\right|^{\frac{1}{p+q}}
\right\}
& < &
+\infty
\,.
\end{eqnarray}

Now we claim that
\begin{eqnarray}\label{Reta}
R_{\eta}
& := &
\sup_{p\geq0,\,q\geq1}
\left\{
\left|
\Delta_{p,(\eta_p,\ldots,\eta_1)}
\left[
\left(
\frac{\overline{\zeta}}{1+|\zeta|^2}
\right)^q
\right](\eta_{p+1})
\right|^{\frac{1}{p+q}}
\right\}
\;<\;
+\infty
\,.
\end{eqnarray}
If it is true, we will have proved the lemma for
$p\geq0,\,q\geq1$. On the other hand, since for all
$q=0,\,p\geq1$, one has
\begin{eqnarray*}
\left|
\Delta_{p,(\eta_p,\ldots,\eta_1)}
(\zeta\mapsto1)(\eta_{p+1})
\right|
& = & 0
\,,
\end{eqnarray*}
and
\begin{eqnarray*}
\left|
\Delta_{0}
(\zeta\mapsto1)(\eta_{1})
\right|
& = &
1
\;=\;
R_{\eta}^0
\,,
\end{eqnarray*}
the proof will be achieved.

Assume that (\ref{Reta}) is not true. For all
$n\geq1$, there exist
$p_n$ and $q_n$ such that
\begin{eqnarray*}
\left|
\Delta_{p_n,(\eta_{p_n},\ldots,\eta_1)}
\left[
\left(
\frac{\overline{\zeta}}{1+|\zeta|^2}
\right)^{q_n}
\right](\eta_{p_n+1})
\right|^{\frac{1}{p_n+q_n}}
& \geq &
n\,.
\end{eqnarray*}
One can choose
$(p_n)_{n\geq1}$ and $(q_n)_{n\geq1}$ such that
the sequence 
$(p_n+q_n)_{n\geq1}$ is strictly increasing. Indeed, let be
$p_1$ and $q_1$ for $n=1$ and assume that there are
$p_1,\ldots,p_n$ and $q_1,\ldots,q_n$ such that
$\forall\,j=1,\ldots,n-1$,
$p_j+q_j<p_{j+1}+q_{j+1}$.
Since
\begin{eqnarray*}
+\infty
& = &
\sup_{p\geq0,\,q\geq1}
\left\{
\left|
\Delta_{p}
\left[
\left(
\frac{\overline{\zeta}}{1+|\zeta|^2}
\right)^q
\right]
\right|^{\frac{1}{p+q}}
\right\}
\\
& = &
\max
\left\{
\sup_{p+q\leq p_n+q_n}
\left|
\Delta_{p}
\left[
\left(
\frac{\overline{\zeta}}{1+|\zeta|^2}
\right)^q
\right]
\right|^{\frac{1}{p+q}}
,
\sup_{p+q>p_n+q_n}
\left|
\Delta_{p}
\left[
\left(
\frac{\overline{\zeta}}{1+|\zeta|^2}
\right)^q
\right]
\right|^{\frac{1}{p+q}}
\right\}
\end{eqnarray*}
and the set
$\{p\geq0,\,q\geq1,\;p+q\leq p_n+q_n\}$ is finite,
it follows that
\begin{eqnarray*}
\sup_{p+q>p_n+q_n}
\left|
\Delta_{p}
\left[
\left(
\frac{\overline{\zeta}}{1+|\zeta|^2}
\right)^q
\right]
\right|^{\frac{1}{p+q}}
& = &
+\infty
\,.
\end{eqnarray*}
In particular, there are
$p_{n+1},\,q_{n+1}$ such that
$p_{n+1}+q_{n+1}>p_n+q_n$ and
\begin{eqnarray*}
\left|
\Delta_{p_{n+1},(\eta_{p_{n+1}},\ldots,\eta_1)}
\left[
\left(
\frac{\overline{\zeta}}{1+|\zeta|^2}
\right)^{q_{n+1}}
\right]
(\eta_{p_{n+1}+1})
\right|^{\frac{1}{p_{n+1}+q_{n+1}}}
& \geq &
n+1
\,.
\end{eqnarray*}
This allows us to construct by induction on $n\geq1$
the sequences
$(p_n)_{n\geq1}$ and $(q_n)_{n\geq1}$.

Now we define $(\varepsilon_n)_{n\geq1}$ by
\begin{eqnarray}
\varepsilon_n
& := &
\begin{cases}
1/\sqrt{j}\,,\mbox{ if }
\exists\,j\geq1,\,p_j+q_j=n\,,
\\
0\mbox{ otherwise}\,.
\end{cases}
\end{eqnarray}
Since $(p_n+q_n)_{n\geq1}$ is strictly increasing, if such a $j$
exists, it is unique then
$(\varepsilon_n)_{n\geq1}$ is well-defined.

On the other hand,
$\lim_{n\rightarrow\infty}\varepsilon_n=0$.
Indeed,
$\forall\,\varepsilon>0$, $\exists\,J\geq1$,
$\forall\,j\geq J$,
$1/\sqrt{j}\leq1/\sqrt{J}<\varepsilon$.
We set 
$N_{\varepsilon}:=p_J+q_J$ and let be any
$n\geq N_{\varepsilon}$.
If there is no $j$ such that $p_j+q_j=n$, then
$\varepsilon_n=0<\varepsilon$; otherwise
$\exists\,j\geq1$, $n=p_j+q_j$ and one has
$p_j+q_j=n\geq N_{\varepsilon}=p_J+q_J$. In particular,
$j\geq J$ (else
$n=p_j+q_j<p_J+q_J=N_{\varepsilon}$), then
$\varepsilon_n=1/\sqrt{j}\leq1/\sqrt{J}<\varepsilon$.

It follows that for all $j\geq1$,
\begin{eqnarray*}
\varepsilon_{p_j+q_j}
\left|
\Delta_{p_j,(\eta_{p_j},\ldots,\eta_1)}
\left[
\left(
\frac{\overline{\zeta}}{1+|\zeta|^2}
\right)^{q_j}
\right](\eta_{p_j+1})
\right|^{\frac{1}{p_j+q_j}}
& \geq &
\frac{1}{\sqrt{j}}\,j
\;=\;
\sqrt{j}
\,,
\end{eqnarray*}
then
\begin{eqnarray*}
\sup_{p\geq0,\,q\geq1}
\left\{
\varepsilon_{p+q}
\left|
\Delta_{p,(\eta_p,\ldots,\eta_1)}
\left[
\left(
\frac{\overline{\zeta}}{1+|\zeta|^2}
\right)^q
\right](\eta_{p+1})
\right|^{\frac{1}{p+q}}
\right\}
& \geq &
\sup_{j\geq1}
\left\{\sqrt{j}\right\}
\;=\;
+\infty
\,,
\end{eqnarray*}
which is in contradiction with~(\ref{prop1}),
thus~(\ref{Reta}) follows.

\end{proof}

\subsection{Condition (\ref{criter}) is sufficient}

In this part we will prove the inverse sense of Theorem~\ref{equivbounded}.
We assume that the (bounded) set
$\{\eta_j\}_{j\geq1}$ satisfies~(\ref{criter}).
We begin with the following result that
is a little stronger consequence
(see Remark~\ref{criterbis}).

\begin{lemma}\label{criter+}

There is $R'_{\eta}\geq1$ such that
for all $p,q,s\geq0$ with
$0\leq s\leq q$, one has
\begin{eqnarray*}
\left|
\Delta_{p,(\eta_p,\ldots,\eta_1)}
\left[
\zeta\mapsto
\frac{\overline{\zeta}^s}
{\left(1+|\zeta|^2\right)^q}
\right](\eta_{p+1})
\right|
& \leq &
{R'_{\eta}}^{p+q}
\,.
\end{eqnarray*}

\end{lemma}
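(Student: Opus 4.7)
The plan is to reduce the lemma to the hypothesis~(\ref{criter}) by first performing a simple algebraic expansion, and then applying the Leibniz rule for $\Delta_p$ (Lemma~\ref{Deltaprod}). The key identity
$$
\frac{1}{1+|\zeta|^2}\;=\;1-\zeta\cdot\frac{\overline{\zeta}}{1+|\zeta|^2}
$$
together with the binomial theorem gives
$$
\frac{\overline{\zeta}^{\,s}}{(1+|\zeta|^2)^{q}}\;=\;w^{s}(1-\zeta w)^{q-s}\;=\;\sum_{k=0}^{q-s}\binom{q-s}{k}(-1)^{k}\zeta^{k}w^{s+k},
$$
where I write $w:=\overline{\zeta}/(1+|\zeta|^2)$. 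Each term is a product of a polynomial $\zeta^k$ with a function $w^{s+k}$ of the form directly controlled by the hypothesis~(\ref{criter}).

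Next, I apply $\Delta_{p,(\eta_p,\ldots,\eta_1)}$ termwise and, for each $k$, use Lemma~\ref{Deltaprod} to expand
$$
\Delta_p(\zeta^k w^{s+k})(\eta_{p+1})\;=\;\sum_{q'=0}^p\Delta_{p-q',(\eta_p,\ldots,\eta_{q'+1})}(\zeta^k)(\eta_{p+1})\cdot\Delta_{q',(\eta_{q'},\ldots,\eta_1)}(w^{s+k})(\eta_{q'+1}).
$$
The essential observation is that, in this Leibniz expansion, the non-holomorphic factor always comes with the initial nodes $\eta_1,\ldots,\eta_{q'+1}$, so the hypothesis~(\ref{criter}) applies directly and bounds it by $R_\eta^{\,q'+s+k}$. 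The holomorphic factor, being the divided difference of a polynomial, is controlled by Lemma~\ref{Deltanal} (which is valid for any choice of nodes): it is a sum of at most $\binom{k}{p-q'}\leq 2^k$ monomials in $\eta_{q'+1},\ldots,\eta_{p+1}$ of modulus at most $M^k$ with $M:=\max(1,\|\eta\|_\infty)$, hence bounded by $(2M)^k$. Collecting all estimates, summing the geometric series $\sum_{q'=0}^p R_\eta^{\,q'}\leq (p+1)R_\eta^{\,p}$ and then the binomial series $\sum_{k=0}^{q-s}\binom{q-s}{k}(2MR_\eta)^{k}=(1+2MR_\eta)^{q-s}$, and absorbing $p+1\leq 2^{p+1}$ into the base of the exponential, yields the required bound with a constant $R'_\eta$ depending only on $R_\eta$ and $\|\eta\|_\infty$.

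The key subtlety---and main obstacle---is the \emph{order of operations}. A naive attempt would factor $\overline{\zeta}^s/(1+|\zeta|^2)^q=w^s\cdot (1+|\zeta|^2)^{-(q-s)}$ and apply Leibniz first; but this produces a factor $\Delta_{p-q'}(w^s)$ evaluated at the \emph{non-initial} nodes $\eta_{q'+1},\ldots,\eta_{p+1}$, which is \emph{not} covered by~(\ref{criter}) (stated only for the initial sequence $\eta_1,\ldots,\eta_{p+1}$). Expanding the real-valued factor $(1+|\zeta|^2)^{-(q-s)}$ \emph{first} absorbs all non-holomorphic content into higher powers of $w$, so that after Leibniz the non-holomorphic factor always appears with the initial nodes; the troublesome non-initial nodes are then carried only by the polynomial factor $\zeta^k$, which is harmless by Lemma~\ref{Deltanal}.
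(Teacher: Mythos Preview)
Your proof is correct and follows a genuinely different route from the paper's. The paper argues by induction on $p+q-s$: it computes $\Delta_p\bigl(\zeta\,\overline{\zeta}^{\,s+1}/(1+|\zeta|^2)^q\bigr)$ in two ways---once via Lemma~\ref{Deltaprod} with the holomorphic factor $\zeta$, once via the identity $\zeta\overline{\zeta}^{\,s+1}=\overline{\zeta}^{\,s}(1+|\zeta|^2)-\overline{\zeta}^{\,s}$---to obtain a three-term recursion relating $\Delta_p\bigl(\overline{\zeta}^{\,s}/(1+|\zeta|^2)^q\bigr)$ to instances with strictly smaller $p+q-s$, and then closes the induction with three separately tuned constants $R,Q,S$.

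You instead iterate the same algebraic identity \emph{up front}, expanding $\overline{\zeta}^{\,s}/(1+|\zeta|^2)^q$ as the finite sum $\sum_k(-1)^k\binom{q-s}{k}\zeta^k w^{s+k}$, and apply Leibniz just once per term. This is cleaner: no induction, and the dependence of $R'_\eta$ on $R_\eta$ and $\|\eta\|_\infty$ is transparent. Your observation that the choice of which factor plays the role of $g$ and which plays $h$ in Lemma~\ref{Deltaprod} matters---so that the non-holomorphic part $w^{s+k}$ lands on the initial nodes $\eta_1,\ldots,\eta_{q'+1}$ where~(\ref{criter}) applies---is exactly the crux; the paper's inductive step exploits the same asymmetry, but by peeling off one holomorphic factor $\zeta$ at a time rather than $\zeta^k$ all at once. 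The inductive version might be marginally easier to sharpen if one cared about optimal constants, but for the stated lemma your argument is at least as good.
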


\begin{proof}

Set
\begin{eqnarray}\label{defRQS}
\begin{cases}
R\;=\;
\max\left(1,R_{\eta}\right)
\,,
\\
Q\;=\;
\max(3,R_{\eta})
\,,
\\
S\;=\;
3\max(1,\|\eta\|_{\infty})
\,.
\end{cases}
\end{eqnarray}
In order to prove the lemma, we want to prove
the following estimation:
$\forall\,p,q,s\geq0$ with
$s\leq q$, one has
\begin{eqnarray}\label{proofcriterbis}
\left|
\Delta_{p,(\eta_p,\ldots,\eta_1)}
\left[
\zeta\mapsto
\frac{\overline{\zeta}^s}
{\left(1+|\zeta|^2\right)^q}
\right](\eta_{p+1})
\right|
& \leq &
R^pQ^qS^{q-s}
\,.
\end{eqnarray}
This will be proved by induction on
$p+q-s\geq0$.
If
$p+q-s=0$ then since $p,q-s\geq0$, necessarily
$p=0$ and $s=q\geq0$. Thus
\begin{eqnarray*}
\left|
\Delta_0
\left(
\frac{\overline{\zeta}^q}{\left(1+|\zeta|^2\right)^q}
\right)(\eta_1)
\right|
& = &
\left|
\frac{\overline{\eta_1}^q}
{\left(1+|\eta_1|^2\right)^q}
\right|
\;=\;
\left(
\frac{0.1+1.|\eta_1|}
{1+|\eta_1|^2}
\right)^q
\\
& \leq &
\left(
\frac{1.\sqrt{1+|\eta_1|^2}}
{1+|\eta_1|^2}
\right)^q
\mbox{ (Cauchy-Schwarz inequality)}
\\
& = &
\frac{1}
{\left(\sqrt{1+|\eta_1|^2}\right)^q}
\;\leq\;1
\;\leq\;
R^0Q^qS^0
\,.
\end{eqnarray*}

If $p+q-s=1$, then
$p=1$ and $q=s\geq0$, or
$p=0$ and $0\leq s=q-1$.
In the first case, since
(\ref{criter})
is satisfied, one has
\begin{eqnarray*}
\left|
\Delta_{1,\eta_1}
\left[
\left(
\frac{\overline{\zeta}}{1+|\zeta|^2}
\right)^q
\right](\eta_2)
\right|
& \leq &
R_{\eta}^{1+q}
\;\leq\;
R^1Q^qS^0
\,.
\end{eqnarray*}
In the second case, one has for all
$q\geq1$
\begin{eqnarray*}
\left|
\Delta_0
\left(
\frac{\overline{\zeta}^{q-1}}
{\left(1+|\zeta|^2\right)^q}
\right)(\eta_1)
\right|
\;=\;
\left|
\frac{\overline{\eta_1}^{q-1}}
{\left(1+|\eta_1|^2\right)^q}
\right|
\;=\;
\frac{1}{1+|\eta_1|^2}
\left|
\frac{\overline{\eta_1}}{1+|\eta_1|^2}
\right|^{q-1}
\;\leq\;
1
\;\leq\;
R^0Q^qS^1
\,.
\end{eqnarray*}

Now let be $m\geq1$ and assume that
(\ref{proofcriterbis}) is true for all
$p,q,s\geq0$ with $s\leq q$ and such that
$p+q-s\leq m$. 
Now let be $p,q,s\geq0$ with $s\leq q$ and such that
$p+q-s=m+1$. One has different cases.

If $p=0$ then
\begin{eqnarray*}
\left|
\Delta_0
\left(
\frac{\overline{\zeta}^{s}}
{\left(1+|\zeta|^2\right)^q}
\right)(\eta_1)
\right|
& = &
\left|
\frac{\overline{\eta_1}^{s}}
{\left(1+|\eta_1|^2\right)^q}
\right|
\;=\;
\left|
\frac{\overline{\eta_1}}
{\left(1+|\eta_1|^2\right)}
\right|^s
\frac{1}
{\left(1+|\eta_1|^2\right)^{q-s}}
\\
& \leq &
1
\;\leq\;
R^0Q^{q}S^{q-s}
\,.
\end{eqnarray*}

If $s=q$ then by (\ref{criter})
\begin{eqnarray*}
\left|
\Delta_{p,(\eta_p,\ldots,\eta_1)}
\left[
\zeta\mapsto
\frac{\overline{\zeta}^q}
{\left(1+|\zeta|^2\right)^q}
\right](\eta_{p+1})
\right|
& \leq &
R_{\eta}^{p+q}
\;\leq\;
R^pQ^qS^{0}
\,.
\end{eqnarray*}

Otherwise
$p\geq1$ and $0\leq s\leq q-1$ (in particular $q\geq1$).
On one hand, one has by Lemmas~\ref{Deltaprod}
and~\ref{Deltanal}
\begin{eqnarray*}
\Delta_{p,(\eta_p,\ldots,\eta_1)}
\left(
\frac{\overline{\zeta}^{s+1}\zeta}
{(1+|\zeta|^2)^q}
\right)(\eta_{p+1})
& = &
\;\;\;\;\;\;\;\;\;\;\;\;\;\;\;\;\;\;\;\;\;\;\;\;\;\;\;\;\;\;\;\;\;\;\;\;\;\;\;\;\;\;
\;\;\;\;\;\;\;\;\;\;\;\;\;\;\;\;\;\;\;\;\;\;\;\;\;\;\;\;\;\;\;\;\;\;\;\;\;\;\;\;
\end{eqnarray*}
\begin{eqnarray*}
& = &
\sum_{r=0}^p
\Delta_{r,(\eta_r,\ldots,\eta_1)}
\left(
\frac{\overline{\zeta}^{s+1}}
{(1+|\zeta|^2)^q}
\right)(\eta_{r+1})
\Delta_{p-r,(\eta_p,\ldots,\eta_{r+1})}
\left(
\zeta\mapsto\zeta
\right)(\eta_{p+1})
\\
& = &
\Delta_{p-1,(\eta_{p-1},\ldots,\eta_1)}
\left(
\frac{\overline{\zeta}^{s+1}}
{(1+|\zeta|^2)^q}
\right)(\eta_{p})
\times1
+
\Delta_{p,(\eta_p,\ldots,\eta_1)}
\left(
\frac{\overline{\zeta}^{s+1}}
{(1+|\zeta|^2)^q}
\right)(\eta_{p+1})
\times\eta_{p+1}
\,.
\end{eqnarray*}
On the other hand,
\begin{eqnarray*}
\Delta_{p,(\eta_p,\ldots,\eta_1)}
\left(
\frac{\overline{\zeta}^{s+1}\zeta}
{(1+|\zeta|^2)^q}
\right)(\eta_{p+1})
& = &
\;\;\;\;\;\;\;\;\;\;\;\;\;\;\;\;\;\;\;\;\;\;\;\;\;\;\;\;\;\;\;\;\;\;\;\;\;\;\;\;\;\;
\;\;\;\;\;\;\;\;\;\;\;\;\;\;\;\;\;\;\;\;\;\;\;\;\;\;\;\;\;\;\;\;\;\;\;\;\;\;\;\;
\end{eqnarray*}
\begin{eqnarray*}
& = &
\Delta_{p,(\eta_p,\ldots,\eta_1)}
\left(
\frac{\overline{\zeta}^{s}(|\zeta|^2+1-1)}
{(1+|\zeta|^2)^q}
\right)(\eta_{p+1})
\\
& = &
\Delta_{p,(\eta_p,\ldots,\eta_1)}
\left(
\frac{\overline{\zeta}^{s}}
{(1+|\zeta|^2)^{q-1}}
\right)(\eta_{p+1})
-
\Delta_{p,(\eta_p,\ldots,\eta_1)}
\left(
\frac{\overline{\zeta}^{s}}
{(1+|\zeta|^2)^{q}}
\right)(\eta_{p+1})
\,.
\end{eqnarray*}
Thus
\begin{eqnarray*}
\Delta_{p,(\eta_p,\ldots,\eta_1)}
\left(
\frac{\overline{\zeta}^{s}}
{(1+|\zeta|^2)^{q}}
\right)(\eta_{p+1})
& = &
\;\;\;\;\;\;\;\;\;\;\;\;\;\;\;\;\;\;\;\;\;\;\;\;\;\;\;\;\;\;\;\;\;\;
\;\;\;\;\;\;\;\;\;\;\;\;\;\;\;\;\;\;\;\;\;\;\;\;\;\;\;\;\;\;\;\;\;\;
\end{eqnarray*}
\begin{eqnarray*}
& = &
\Delta_{p,(\eta_p,\ldots,\eta_1)}
\left(
\frac{\overline{\zeta}^{s}}
{(1+|\zeta|^2)^{q-1}}
\right)(\eta_{p+1})
\\
& &
-
\Delta_{p-1,(\eta_{p-1},\ldots,\eta_1)}
\left(
\frac{\overline{\zeta}^{s+1}}
{(1+|\zeta|^2)^q}
\right)(\eta_{p})
-
\eta_{p+1}
\Delta_{p,(\eta_p,\ldots,\eta_1)}
\left(
\frac{\overline{\zeta}^{s+1}}
{(1+|\zeta|^2)^q}
\right)(\eta_{p+1})
\,.
\end{eqnarray*}
Since
$s\leq q-1$, $s+1\leq q$ and 
$(p-1)+q-(s+1)\leq p+(q-1)-s=p+q-(s+1)=m$,
by induction and~(\ref{defRQS}) it follows that
\begin{eqnarray*}
\left|
\Delta_{p,(\eta_p,\ldots,\eta_1)}
\left(
\frac{\overline{\zeta}^{s}}
{(1+|\zeta|^2)^{q}}
\right)(\eta_{p+1})
\right|
& \leq &
\;\;\;\;\;\;\;\;\;\;\;\;\;\;\;\;\;\;\;\;\;\;\;\;\;\;\;\;\;\;\;\;
\;\;\;\;\;\;\;\;\;\;\;\;\;\;\;\;\;\;\;\;\;\;\;\;\;\;\;\;\;\;\;\;\;
\end{eqnarray*}
\begin{eqnarray*}
& \leq &
R^pQ^{q-1}S^{q-s}
+R^{p-1}Q^qS^{q-s-1}
+\|\eta\|_{\infty}
R^pQ^qS^{q-s-1}
\\
& \leq &
R^{p-1}Q^{q-1}S^{q-s-1}
\left(
RS+Q+\|\eta\|_{\infty}RQ
\right)
\\
& \leq &
R^{p-1}Q^{q-1}S^{q-s-1}
\left(
RS\frac{Q}{3}
+
Q\frac{RS}{3}
+
\frac{S}{3}
RQ
\right)
\;=\;
R^pQ^qS^{q-s}
\,,
\end{eqnarray*}
and this proves~(\ref{proofcriterbis}).

Finally, if we set
\begin{eqnarray}\label{defReta'}
R'_{\eta}
& := &
\left[
\max(R,Q,S)
\right]^2
\;=\;
\left[
\max\left(
3,3\|\eta\|_{\infty},R_{\eta}
\right)
\right]^2
\,,
\end{eqnarray}
(\ref{proofcriterbis}) yields to,
for all $p,q,s\geq0$ with $s\leq q$,
\begin{eqnarray*}
\left|
\Delta_{p,(\eta_p,\ldots,\eta_1)}
\left(
\frac{\overline{\zeta}^{s}}
{(1+|\zeta|^2)^{q}}
\right)(\eta_{p+1})
\right|
& \leq &
R^pQ^qS^{q}
\;\leq\;
{R'_{\eta}}^{p+q}
\,,
\end{eqnarray*}
and the proof is achieved.

\end{proof}

In the following the constant
$R_{\eta}$ will mean 
$R'_{\eta}$ from Lemma~\ref{criter+}.
One can deduce as a consequence the following result.

\begin{lemma}\label{aux}

For all $p,q\geq0$ and $z\in\C^2$,
\begin{eqnarray*}
\left|
\Delta_{p,(\eta_p,\ldots,\eta_1)}
\left[
\zeta\mapsto
\left(
\frac{z_2+\overline{\zeta}z_1}
{1+|\zeta|^2}
\right)^q
\right](\eta_{p+1})
\right|
& \leq &
R_{\eta}^p(2R_{\eta}\|z\|)^q
\,.
\end{eqnarray*}

\end{lemma}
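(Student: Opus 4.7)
The plan is to expand the numerator by the binomial theorem, reduce to the terms controlled by Lemma~\ref{criter+}, and combine. First, for any fixed $z\in\C^2$, write
\begin{eqnarray*}
\left(\frac{z_2+\overline{\zeta}z_1}{1+|\zeta|^2}\right)^q
& = &
\sum_{s=0}^q\binom{q}{s}z_2^{q-s}z_1^s\,
\frac{\overline{\zeta}^s}{(1+|\zeta|^2)^q}\,.
\end{eqnarray*}
Since $\Delta_{p,(\eta_p,\ldots,\eta_1)}$ acts only on the variable $\zeta$ and is linear in its argument, the factors $z_1^sz_2^{q-s}$ pull out of the divided-difference operator.

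Next, I apply Lemma~\ref{criter+} to each summand. For every $s$ with $0\leq s\leq q$, the hypothesis gives
\begin{eqnarray*}
\left|\Delta_{p,(\eta_p,\ldots,\eta_1)}\!\left[\frac{\overline{\zeta}^s}{(1+|\zeta|^2)^q}\right]\!(\eta_{p+1})\right|
& \leq &
R_\eta^{p+q}\,.
\end{eqnarray*}
Combining this with the triangle inequality and the binomial identity $\sum_{s=0}^q\binom{q}{s}|z_1|^s|z_2|^{q-s}=(|z_1|+|z_2|)^q$ yields
\begin{eqnarray*}
\left|\Delta_{p,(\eta_p,\ldots,\eta_1)}\!\left[\left(\frac{z_2+\overline{\zeta}z_1}{1+|\zeta|^2}\right)^q\right]\!(\eta_{p+1})\right|
& \leq &
R_\eta^{p+q}\,(|z_1|+|z_2|)^q\,.
\end{eqnarray*}

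Finally, the elementary estimate $|z_1|+|z_2|\leq 2\|z\|$ (which follows at once from $|z_j|\leq\|z\|$) gives the bound $R_\eta^{p+q}(2\|z\|)^q=R_\eta^p(2R_\eta\|z\|)^q$ claimed in the statement. There is no real obstacle: once Lemma~\ref{criter+} is available, the whole argument is a binomial expansion followed by the triangle inequality, and the only minor care needed is to note that $\Delta_p$ is taken with respect to $\zeta$ so that the $z$-dependent coefficients pass through it freely.
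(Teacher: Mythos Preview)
Your proof is correct and follows essentially the same approach as the paper: a binomial expansion of $(z_2+\overline{\zeta}z_1)^q$, linearity of $\Delta_p$ in $\zeta$, application of Lemma~\ref{criter+} term by term, and then summing. The only cosmetic difference is that the paper bounds each monomial $|z_1|^s|z_2|^{q-s}\leq\|z\|^q$ before summing $\sum_s\binom{q}{s}=2^q$, whereas you first sum to $(|z_1|+|z_2|)^q$ and then bound by $(2\|z\|)^q$; both yield the same $R_\eta^p(2R_\eta\|z\|)^q$.
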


\bigskip

\begin{proof}

Indeed,
Lemma~\ref{criter+} yields to
\begin{eqnarray*}
\left|
\Delta_{p,(\eta_p,\ldots,\eta_1)}
\left[
\left(
\frac{z_2+\overline{\zeta}z_1}
{1+|\zeta|^2}
\right)^q
\right](\eta_{p+1})
\right|
& = &
\;\;\;\;\;\;\;\;\;\;\;\;\;\;\;\;\;\;\;\;\;\;\;\;\;\;\;\;\;\;\;
\;\;\;\;\;\;\;\;\;\;\;\;\;\;\;\;\;\;\;\;\;\;\;\;\;\;\;\;\;\;\;
\end{eqnarray*}
\begin{eqnarray*}
& = &
\left|
\sum_{u=0}^q
\frac{q!}{u!\,(q-u)!}
z_2^{q-u}
z_1^{u}
\Delta_{p,(\eta_p,\ldots,\eta_1)}
\left(
\frac{\overline{\zeta}^{u}}
{(1+|\zeta|^2)^q}
\right)(\eta_{p+1})
\right|
\\
& \leq &
\sum_{u=0}^q
\frac{q!}{u!\,(q-u)!}
\|z\|^q
\left|
\Delta_{p,(\eta_p,\ldots,\eta_1)}
\left(
\frac{\overline{\zeta}^{u}}
{(1+|\zeta|^2)^q}
\right)(\eta_{p+1})
\right|
\\
& \leq &
\|z\|^q
\sum_{u=0}^q
\frac{q!}{u!\,(q-u)!}
R_{\eta}^{p+q}
\;=\;
\|z\|^q
2^q
R_{\eta}^{p+q}
\;=\;
R_{\eta}^p(2R_{\eta}\|z\|)^q
\,.
\end{eqnarray*}

\end{proof}

The next result will be usefull in order to prove the reciprocal
sense of Theorem~\ref{equivbounded}.

\begin{lemma}\label{combi}

For all $n,p\geq0$,
\begin{eqnarray*}
A_n^p & := &
\sum_{l_1=0}^{n}
\sum_{l_2=0}^{l_1}
\cdots
\sum_{l_p=0}^{l_{p-1}}
1
\;=\;
card
\left\{
(l_1,\ldots,l_p)\in\mathbb{N}^p,\,
n\geq l_1\geq l_2\geq\cdots\geq l_p\geq0
\right\}
\\
& = &
\frac{(n+p)!}{n!\,p!}
\,.
\end{eqnarray*}

\end{lemma}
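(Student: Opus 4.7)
The plan is to proceed by induction on $p \geq 0$, exploiting the obvious recursion that drops out of the definition. Indeed, separating the outermost summation gives
$$A_n^p \;=\; \sum_{l_1=0}^n \left(\sum_{l_2=0}^{l_1} \cdots \sum_{l_p=0}^{l_{p-1}} 1\right) \;=\; \sum_{l_1=0}^n A_{l_1}^{p-1},$$
with the base case $A_n^0 = 1 = \binom{n}{0}$ being vacuous (and $A_n^1 = n+1 = \binom{n+1}{1}$ confirming the pattern).

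For the inductive step, assuming $A_m^{p-1} = \binom{m+p-1}{p-1}$ for every $m \geq 0$, I would need to check that
$$\sum_{m=0}^n \binom{m+p-1}{p-1} \;=\; \binom{n+p}{p},$$
which is the classical hockey-stick identity. This in turn follows by a short secondary induction on $n$ using Pascal's rule $\binom{n+p}{p} = \binom{n+p-1}{p} + \binom{n+p-1}{p-1}$, so no real obstacle arises.

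A more conceptual alternative, which I would mention for the reader, bypasses the induction entirely: the tuples $(l_1,\ldots,l_p)$ with $n \geq l_1 \geq \cdots \geq l_p \geq 0$ are in bijection with the multisets of size $p$ drawn from $\{0,1,\ldots,n\}$ (a set of $n+1$ elements), and a standard stars-and-bars argument counts these as $\binom{(n+1)+p-1}{p} = \binom{n+p}{p}$. Either route works; the only computational content is one of the two standard binomial identities, so the proof is routine.
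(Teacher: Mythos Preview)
Your proof is correct. Both your approach and the paper's use induction and ultimately rest on Pascal's rule, but the organization differs. The paper inducts on $n+p$ and derives the Pascal-type recursion $A_n^p = A_{n-1}^p + A_n^{p-1}$ directly, by splitting on whether $l_1 = n$ or $l_1 \leq n-1$; each inductive step is then a single application of Pascal's rule. Your induction on $p$ uses instead the recursion $A_n^p = \sum_{m=0}^n A_m^{p-1}$, which requires the hockey-stick identity at each step; this is equivalent (the hockey-stick identity is itself an iterated Pascal's rule), but packages more work into each step and needs a secondary induction on $n$. Your bijective alternative via multisets and stars-and-bars is a genuinely different route not mentioned in the paper, and is arguably the cleanest of the three.
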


\begin{proof}

First, we admit that if $p=0$ then $l_0=n$ and
\begin{eqnarray*}
A_n^0
& = &
\prod_{j=1}^0
\sum_{l_j=0}^{l_{j-1}}
1
\;=\;
1
\,.
\end{eqnarray*}
So one can assume that $p\geq1$ and we
prove this result by induction on
$n+p\geq1$.
If $n+p=1$ then $p=1$ and $n=0$ then
\begin{eqnarray*}
A_0^1
\;=\;
card\left\{
s_1\in\mathbb{N},\;
0\geq s_1\geq0
\right\}
\;=\;
1
\,.
\end{eqnarray*}
Now we assume that it is true for all
$p,\,n$ such that
$1\leq p+n\leq m$ with $p\geq1$ and $n\geq0$,
and let be $n\geq0$, $p\geq1$ such that $n+p=m+1$.

If $n=0$ then
\begin{eqnarray*}
card
\left\{
0\geq s_1\geq\cdots\geq s_{p}\geq0
\right\}
& = &
card\{(0,\ldots,0)\}
\;=\;
1
\,.
\end{eqnarray*}

If $p=1$ then
\begin{eqnarray*}
card\left\{n\geq s_1\geq0\right\}
& = &
\frac{(n+1)!}{n!\,1!}
\,.
\end{eqnarray*}

Otherwise $n\geq1$ and $p\geq2$. We claim that
\begin{eqnarray}\label{pascal}
A_n^p 
& = &
A_{n-1}^p\;+\;A_n^{p-1}
\,.
\end{eqnarray}
Indeed, for any element of
$\left\{
(s_1,\ldots,s_p)\in\mathbb{N}^p,\;
n\geq s_1\geq\cdots\geq s_p\geq0
\right\}$,
either $s_1=n$ or $s_1\leq n-1$. Then
\begin{eqnarray*}
card
\left\{
(s_1,\ldots,s_p)\in\mathbb{N}^p,\;
n\geq s_1\geq\cdots\geq s_p\geq0
\right\}
& = &
\;\;\;\;\;\;\;\;\;\;\;\;\;\;\;\;\;\;\;\;\;\;\;\;\;\;\;\;\;\;\;\;\;\;\;\;
\end{eqnarray*}
\begin{eqnarray*}
& = &
card
\left\{
(s_2,\ldots,s_p)\in\mathbb{N}^{p-1},\;
n\geq s_2\geq\cdots\geq s_p\geq0
\right\}
\\
& &
+\;
card
\left\{
(s_1,\ldots,s_p)\in\mathbb{N}^p,\;
n-1\geq s_1\geq\cdots\geq s_p\geq0
\right\}
\\
& = &
A_n^{p-1}+A_{n-1}^p\,.
\end{eqnarray*}
Since
$n+p-1=n-1+p=m$, one has by induction
\begin{eqnarray*}
A_n^p
& = &
\frac{(n+p-1)!}{n!\,(p-1)!}
+\frac{(n-1+p)!}{(n-1)!\,p!}
\;=\;
\frac{(n+p-1)!}{n!\,p!}(p+n)
\end{eqnarray*}
and the lemma is proved.

\end{proof}

Now Lemmas~\ref{aux} and~\ref{combi} yield to
the following result.

\begin{lemma}\label{aux1}

Let be
$f\in\mathcal{O}\left(\mathbb{C}^2\right)$.
For all $p\geq0$, $N\geq1$,
$z\in\mathbb{C}^2$
and
$R>2\|\eta\|_{\infty}R_{\eta}^2\|z\|$,
one has
\begin{eqnarray*}
\left|
\Delta_{p,(\eta_p,\ldots,\eta_1)}
\left[
\zeta\mapsto
\sum_{m\geq N}
\left(
\frac{z_2+\overline{\zeta}z_1}
{1+|\zeta|^2}
\right)^{m-N+1}
\sum_{k+l=m}a_{k,l}\zeta^k
\right](\eta_{p+1})
\right|
& \leq &
\;\;\;\;\;\;\;\;\;\;\;\;\;\;\;\;\;\;
\end{eqnarray*}
\begin{eqnarray*}
\;\;\;\;\;\;\;\;\;\;\;\;\;\;\;\;\;\;\;\;\;\;\;\;\;\;\;\;\;\;\;\;\;\;\;\;
& \leq &
\frac{8\|f\|_RR_{\eta}^2\|z\|}
{\|\eta\|_{\infty}(1-2\|\eta\|_{\infty}R_{\eta}^2\|z\|/R)}
\left(
\frac{\|\eta\|_{\infty}R_{\eta}}{R}
\right)^N
\left(\frac{R_{\eta}(1+\|\eta\|_{\infty})}{\|\eta\|_{\infty}}\right)^p
\,,
\end{eqnarray*}
with
\begin{eqnarray}
\|f\|_R
& := &
\sup_{|z_1|,|z_2|\leq R}|f(z_1,z_2)|
\,.
\end{eqnarray}

\end{lemma}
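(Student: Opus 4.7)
The plan is to split the expression inside $\Delta_p$ as a Taylor-indexed sum of products, apply the Leibniz-type identity of Lemma~\ref{Deltaprod}, then estimate the holomorphic and the non-holomorphic factors by separate means. Writing
$w(\zeta,z):=(z_2+\overline{\zeta}z_1)/(1+|\zeta|^2)$ and $h_m(\zeta):=\sum_{k+l=m}a_{k,l}\zeta^k$, the function to estimate is
\[
F(\zeta):=\sum_{m\geq N}w(\zeta,z)^{m-N+1}h_m(\zeta),
\]
and since $f\in\mathcal{O}(\C^2)$ the series converges absolutely and locally uniformly in $\zeta$, so $\Delta_p$ may be applied termwise.

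For each fixed $m\geq N$, I would apply Lemma~\ref{Deltaprod} to the product $h_m(\zeta)\cdot w(\zeta,z)^{m-N+1}$:
\[
\Delta_p\bigl[h_m\,w^{m-N+1}\bigr](\eta_{p+1})
=\sum_{q=0}^{p}\Delta_{p-q,(\eta_p,\ldots,\eta_{q+1})}(h_m)(\eta_{p+1})\cdot\Delta_{q,(\eta_q,\ldots,\eta_1)}\bigl(w^{m-N+1}\bigr)(\eta_{q+1}).
\]
The non-holomorphic factor is controlled \emph{directly} by Lemma~\ref{aux}, giving
$|\Delta_q(w^{m-N+1})(\eta_{q+1})|\leq R_\eta^q(2R_\eta\|z\|)^{m-N+1}$. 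For the holomorphic factor, $h_m$ is a polynomial in $\zeta$ of degree $\leq m$, so Lemma~\ref{Deltanal} (applied at $w_0=0$) yields the explicit formula
\[
\Delta_{p-q}(h_m)(\eta_{p+1})=\sum_{n=p-q}^{m}a_{n,m-n}\sum_{l_1=0}^{n-(p-q)}\eta_?^{n-(p-q)-l_1}\cdots\sum_{l_{p-q}=0}^{l_{p-q-1}}\eta_?^{l_{p-q-1}-l_{p-q}}\eta_{p+1}^{l_{p-q}},
\]
where the displayed $\eta_?$ run over the nodes $\eta_{q+1},\ldots,\eta_p$. Each monomial in the inner sum has total degree in the $\eta$'s equal to $n-(p-q)$, hence is bounded in modulus by $\|\eta\|_\infty^{n-(p-q)}$, and Lemma~\ref{combi} gives that the number of such tuples is exactly $\binom{n}{p-q}$. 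Combined with Cauchy's inequality $|a_{n,m-n}|\leq\|f\|_R/R^m$, we obtain
\[
|\Delta_{p-q}(h_m)(\eta_{p+1})|\leq\frac{\|f\|_R}{R^m}\sum_{n=p-q}^{m}\binom{n}{p-q}\|\eta\|_\infty^{n-(p-q)}.
\]

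Putting these estimates together, the bound becomes
\[
|\Delta_pF(\eta_{p+1})|\leq\|f\|_R\sum_{m\geq N}\frac{(2R_\eta\|z\|)^{m-N+1}}{R^m}\sum_{q=0}^{p}R_\eta^{q}\sum_{n=p-q}^{m}\binom{n}{p-q}\|\eta\|_\infty^{n-(p-q)}.
\]
It remains to evaluate the triple sum; this is the main technical and computational step. After the substitutions $r=p-q$ and $s=n-r$, the inner double sum reads $\sum_{r=0}^{p}R_\eta^{p-r}\sum_{s=0}^{m-r}\binom{r+s}{r}\|\eta\|_\infty^{s}$, and one combines (i) the binomial estimate $\binom{r+s}{r}\|\eta\|_\infty^s\leq(1+\|\eta\|_\infty)^{r+s}$ (or a sharper Pascal-type manipulation), (ii) two nested geometric summations in $s$ and $r$ (using $R_\eta\geq 3$ from Lemma~\ref{criter+}, equation~(\ref{defReta'}), together with $R_\eta\geq 3\|\eta\|_\infty$), and (iii) the outer geometric summation in $m$ with ratio $2\|\eta\|_\infty R_\eta^{2}\|z\|/R$, whose convergence is precisely the hypothesis $R>2\|\eta\|_\infty R_\eta^{2}\|z\|$ and supplies the prefactor $1/(1-2\|\eta\|_\infty R_\eta^{2}\|z\|/R)$.

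The main obstacle is exactly this bookkeeping: matching the exponents in the target form
\[
\bigl(\|\eta\|_\infty R_\eta/R\bigr)^{N}\,\bigl(R_\eta(1+\|\eta\|_\infty)/\|\eta\|_\infty\bigr)^{p}
\]
requires carefully \emph{trading} a factor of $\|\eta\|_\infty^{m}$ out of the polynomial estimate against the $(1+\|\eta\|_\infty)^{p}$ factor produced by the binomial identity, and using $R_\eta\geq 3\max(1,\|\eta\|_\infty)$ to absorb stray constants into the factor $8R_\eta^{2}$ in the numerator. The convergence condition $R>2\|\eta\|_\infty R_\eta^{2}\|z\|$ is dictated by the geometric sum in $m$, so once that is in place and the combinatorial identity is used correctly, the claimed inequality follows.
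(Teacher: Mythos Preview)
Your approach is essentially the one taken in the paper: split via the Leibniz rule (Lemma~\ref{Deltaprod}), bound the non-holomorphic factor by Lemma~\ref{aux}, bound the polynomial factor via Lemma~\ref{Deltanal} together with the counting identity of Lemma~\ref{combi} and the Cauchy inequality $|a_{k,l}|\le\|f\|_R/R^{k+l}$, then sum geometrically in $m$.

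The one point where the paper differs from your sketch is the handling of the inner combinatorial sum $\sum_{k\ge r}\binom{k}{r}\|\eta\|_\infty^{k-r}$ (with $r=p-q$). Rather than bounding termwise by $\binom{r+s}{r}\|\eta\|_\infty^s\le(1+\|\eta\|_\infty)^{r+s}$ as you propose, the paper recognises this sum as $\dfrac{1}{r!}\dfrac{\partial^{r}}{\partial t^{r}}\Big|_{t=\|\eta\|_\infty}\Big[\sum_{k=0}^m t^k\Big]$ and estimates it by the Cauchy integral on the circle $|t|=R_\eta$, obtaining directly
\[
\sum_{k=r}^{m}\binom{k}{r}\|\eta\|_\infty^{k-r}
\;\le\;
\frac{R_\eta\sum_{k=0}^m R_\eta^{k}}{(R_\eta-\|\eta\|_\infty)^{r+1}}
\;\le\;
\frac{2R_\eta^{m+1}}{\|\eta\|_\infty^{r+1}},
\]
using $R_\eta\ge 2\|\eta\|_\infty$ and $R_\eta\ge2$ from~(\ref{defReta'}). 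This produces the factors $R_\eta^m$ and $\|\eta\|_\infty^{-(p-q)}$ in one stroke, so that the subsequent $q$-sum is a clean geometric series in $(R_\eta\|\eta\|_\infty)^q$ and the $m$-sum has ratio $2\|\eta\|_\infty R_\eta^2\|z\|/R$. Your binomial route also works (since $R_\eta\ge(1+\|\eta\|_\infty)$), but the Cauchy-integral trick avoids the ``trading'' of exponents you describe and makes the match with the stated constants more transparent.
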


\begin{proof}

First, for all $m\geq N$, one has
\begin{eqnarray*}
\Delta_{p,(\eta_p,\ldots,\eta_1)}
\left[
\left(
\frac{z_2+\overline{\zeta}z_1}
{1+|\zeta|^2}
\right)^{m-N+1}
\sum_{k+l=m}a_{k,l}\zeta^k
\right](\eta_{p+1})
& = &
\;\;\;\;\;\;\;\;\;\;\;\;\;\;\;\;\;\;\;\;\;\;\;\;\;\;\;\;\;\;\;
\;\;\;\;\;\;
\end{eqnarray*}
\begin{eqnarray*}
& = &
\sum_{v=0}^{p}
\Delta_{v,(\eta_v,\ldots,\eta_1)}
\left[
\left(
\frac{z_2+\overline{\zeta}z_1}
{1+|\zeta|^2}
\right)^{m-N+1}
\right](\eta_{v+1})
\Delta_{p-v,(\eta_p,\ldots,\eta_{v+1})}
\left(
\sum_{k+l=m}a_{k,l}\zeta^k
\right)
(\eta_{p+1})
\,.
\end{eqnarray*}

Next, for all
$0\leq v\leq p$ and $m\geq N$ ($>p$),
on has by Lemmas~\ref{Deltanal}
and~\ref{combi}
\begin{eqnarray*}
\left|
\Delta_{p-v,(\eta_p,\ldots,\eta_{v+1})}
\left(
\sum_{k=0}^{m}a_{k,m-k}\zeta^k
\right)
(\eta_{p+1})
\right|
& \leq &
\;\;\;\;\;\;\;\;\;\;\;\;\;\;\;\;\;\;\;\;\;\;\;\;\;\;\;\;\;\;\;
\;\;\;\;\;\;\;\;\;\;\;\;\;\;\;\;\;\;\;\;\;\;\;\;\;\;\;\;\;\;\;
\end{eqnarray*}
\begin{eqnarray*}
& \leq &
\sum_{k=p-v}^{m}
|a_{k,m-k}|
\sum_{l_1=0}^{k-p+v}|\eta_{v+1}|^{k-p+v-l_1}
\sum_{l_2=0}^{l_1}
|\eta_{v+2}|^{l_1-l_2}
\cdots
\sum_{l_{p-v}=0}^{l_{p-v-1}}
|\eta_{p}|^{l_{p-v-1}-l_{p-v}}
|\eta_{p+1}|^{l_{p-v}}
\\
& \leq &
\sum_{k=p-v}^{m}
|a_{k,m-k}|
\|\eta\|_{\infty}^{k-p+v}
\sum_{l_1=0}^{k-p+v}
\cdots
\sum_{l_{p-v}=0}^{l_{p-v-1}}
1
=\sum_{k=p-v}^{m}
|a_{k,m-k}|
\|\eta\|_{\infty}^{k-p+v}
\frac{k!}{(p-v)!\,(k-p+v)!}
\,.
\end{eqnarray*}
On the other hand, since
$f\in\mathcal{O}\left(\mathbb{C}^2\right)$, one has
for all $R\geq1$
\begin{eqnarray*}
\left|a_{k,l}\right|
& = &
\left|
\frac{1}{(2i\pi)^2}
\int_{|\zeta_1|=|\zeta_2|=R}
\frac{f(\zeta_1,\zeta_2)\,d\zeta_1\wedge d\zeta_2}
{\zeta_1^{k+1}\zeta_2^{l+1}}
\right|
\;\leq\;
\frac{\|f\|_R}{R^{k+l}}
\,.
\end{eqnarray*}
Thus
\begin{eqnarray*}
\left|
\Delta_{p-v,(\eta_p,\ldots,\eta_{v+1})}
\left(
\sum_{k=0}^{m}a_{k,m-k}\zeta^k
\right)
(\eta_{p+1})
\right|
& \leq &
\;\;\;\;\;\;\;\;\;\;\;\;\;\;\;\;\;\;\;\;\;\;\;\;\;\;\;\;\;\;\;
\;\;\;\;\;\;\;\;\;\;\;\;\;\;\;\;\;\;\;\;\;\;\;\;\;\;\;\;\;\;\;
\end{eqnarray*}
\begin{eqnarray*}
& \leq &
\frac{\|f\|_R}{R^m}
\sum_{k=p-v}^{m}
\frac{k!}{(p-v)!\,(k-p+v)!}
\|\eta\|_{\infty}^{k-p+v}
\;=\;
\frac{\|f\|_R}{R^m}
\frac{1}{(p-v)!}
\frac{\partial^{p-v}}{\partial t^{p-v}}|_{t=\|\eta\|_{\infty}}
\left[
\sum_{k=0}^{m}
t^k
\right]
\\
& = &
\frac{\|f\|_R}{R^m}
\frac{1}{2i\pi}
\int_{|t|=R_{\eta}}
\sum_{k=0}^{m}
t^k
\frac{dt}{(t-\|\eta\|_{\infty})^{p-v+1}}
\\
& \leq &
\frac{\|f\|_R}{R^m}
\frac{R_{\eta}\sum_{k=0}^{m}R_{\eta}^k}{(R_{\eta}-\|\eta\|_{\infty})^{p-v+1}}
\;=\;
\frac{\|f\|_R}{R^m}
\frac{R_{\eta}^{m+1}-1}
{(R_{\eta}-\|\eta\|_{\infty})^{p-v+1}(1-1/R_{\eta})}
\\
& \leq &
\frac{2\|f\|_R}{R^m}
\frac{R_{\eta}^{m+1}}
{\|\eta\|_{\infty}^{p-v+1}}
\;=\;
2\|f\|_R
\frac{R_{\eta}}{\|\eta\|_{\infty}}
\left(
\frac{R_{\eta}}{R}
\right)^m
\frac{1}{\|\eta\|_{\infty}^{p-v}}
\,,
\end{eqnarray*}
since by~(\ref{defReta'}),
$R_{\eta}\geq2\|\eta\|_{\infty},2$.

It follows with Lemma~\ref{aux} that
\begin{eqnarray*}
\Delta_{p,(\eta_p,\ldots,\eta_1)}
\left[
\left(
\frac{z_2+\overline{\zeta}z_1}
{1+|\zeta|^2}
\right)^{m-N+1}
\sum_{k+l=m}a_{k,l}\zeta^k
\right](\eta_{p+1})
& \leq &
\;\;\;\;\;\;\;\;\;\;\;\;\;\;\;\;\;\;
\end{eqnarray*}
\begin{eqnarray*}
\;\;\;\;\;\;\;\;\;\;\;\;
& \leq &
\sum_{v=0}^p
R_{\eta}^v
(2R_{\eta}\|z\|)^{m-N+1}
\frac{2\|f\|_RR_{\eta}}{\|\eta\|_{\infty}}
\left(
\frac{R_{\eta}}{R}
\right)^m
\frac{1}{\|\eta\|_{\infty}^{p-v}}
\\
& = &
\frac{2\|f\|_RR_{\eta}}{\|\eta\|_{\infty}}
\left(
\frac{R_{\eta}}{R}
\right)^m
(2R_{\eta}\|z\|)^{m-N+1}
\frac{1}{\|\eta\|_{\infty}^p}
\sum_{v=0}^p
\left(R_{\eta}\|\eta\|_{\infty}\right)^v
\\
& \leq &
\frac{2\|f\|_RR_{\eta}}
{\|\eta\|_{\infty}(2R_{\eta}\|z\|)^{N-1}}
\left(
\frac{2\|\eta\|_{\infty}R_{\eta}^2\|z\|}{R}
\right)^m
\frac{1}{\|\eta\|_{\infty}^p}
\frac{(R_{\eta}(1+\|\eta\|_{\infty}))^{p+1}-1}
{R_{\eta}(1+\|\eta\|_{\infty})-1}
\\
& \leq &
\frac{4\|f\|_RR_{\eta}}
{\|\eta\|_{\infty}(2R_{\eta}\|z\|)^{N-1}}
\left(
\frac{2\|\eta\|_{\infty}R_{\eta}^2\|z\|}{R}
\right)^m
\left(\frac{R_{\eta}(1+\|\eta\|_{\infty})}{\|\eta\|_{\infty}}\right)^p
\,.
\end{eqnarray*}
One can deduce that, for all
$N\geq1$, $0\leq p\leq N-1$,
$z\in\mathbb{C}^2$ and
$R>2\|\eta\|_{\infty}R_{\eta}^2\|z\|$,
\begin{eqnarray*}
\left|
\Delta_{p,(\eta_p,\ldots,\eta_1)}
\left[
\sum_{m\geq N}
\left(
\frac{z_2+\overline{\zeta}z_1}
{1+|\zeta|^2}
\right)^{m-N+1}
\sum_{k+l=m}a_{k,l}\zeta^k
\right](\eta_{p+1})
\right|
& \leq &
\;\;\;\;\;\;\;\;\;\;\;\;\;\;\;\;\;\;
\end{eqnarray*}
\begin{eqnarray*}
& \leq &
\sum_{m\geq N}
\left|
\Delta_{p,(\eta_p,\ldots,\eta_1)}
\left[
\left(
\frac{z_2+\overline{\zeta}z_1}
{1+|\zeta|^2}
\right)^{m-N+1}
\sum_{k+l=m}a_{k,l}\zeta^k
\right](\eta_{p+1})
\right|
\\
& \leq &
\frac{4\|f\|_RR_{\eta}}
{\|\eta\|_{\infty}(2R_{\eta}\|z\|)^{N-1}}
\left(\frac{R_{\eta}(1+\|\eta\|_{\infty})}{\|\eta\|_{\infty}}\right)^p
\sum_{m\geq N}
\left(
\frac{2\|\eta\|_{\infty}R_{\eta}^2\|z\|}{R}
\right)^m
\\
& \leq &
\frac{4\|f\|_RR_{\eta}}
{\|\eta\|_{\infty}(2R_{\eta}\|z\|)^{N-1}}
\left(\frac{R_{\eta}(1+\|\eta\|_{\infty})}{\|\eta\|_{\infty}}\right)^p
\frac{\left(
2\|\eta\|_{\infty}R_{\eta}^2\|z\|/R
\right)^N}
{1-2\|\eta\|_{\infty}R_{\eta}^2\|z\|/R}
\\
& = &
\frac{8\|f\|_RR_{\eta}^2\|z\|}
{\|\eta\|_{\infty}(1-2\|\eta\|_{\infty}R_{\eta}^2\|z\|/R)}
\left(
\frac{\|\eta\|_{\infty}R_{\eta}}{R}
\right)^N
\left(\frac{R_{\eta}(1+\|\eta\|_{\infty})}{\|\eta\|_{\infty}}\right)^p
\,.
\end{eqnarray*}

\end{proof}

We can finally complete the proof of Theorem~\ref{equivbounded}.

\begin{proof}

$f\in\mathcal{O}\left(\C^2\right)$
and $K\subset\C^2$ compact subset being given,
it follows from Lemma~\ref{aux1} that,
for all
$N\geq1$, $z\in K$
and
$R>2\|\eta\|_{\infty}R_{\eta}^2\|z\|$,
\begin{eqnarray*}
|R_N(f;\eta)(z)|
& \leq &
\;\;\;\;\;\;\;\;\;\;\;\;\;\;\;\;\;\;\;\;\;\;\;\;\;\;\;\;\;\;\;\;\;\;\;\;
\;\;\;\;\;\;\;\;\;\;\;\;\;\;\;\;\;\;\;\;\;\;\;\;\;\;\;\;\;\;\;\;\;\;\;\;
\;\;\;\;\;\;\;\;\;\;\;\;\;\;\;\;\;\;\;\;\;\;\;\;
\end{eqnarray*}
\begin{eqnarray*}
& \leq &
\sum_{p=0}^{N-1}
|z_2|^{N-1-p}
\prod_{j=1}^p
|z_1-\eta_jz_2|
\left|
\Delta_{p,(\eta_p,\ldots,\eta_1)}
\left[
\sum_{k+l\geq N}
a_{k,l}\eta_p^k
\left(
\frac{z_2+\overline{\eta_p}z_1}{1+|\eta_p|^2}
\right)^{k+l-N+1}
\right](\eta_{p+1})
\right|
\\
& \leq &
\sum_{p=0}^{N-1}
\|z\|^{N-1-p}
\|z\|^p
\prod_{j=1}^p\sqrt{1+|\eta_j|^2}
\frac{8\|f\|_RR_{\eta}^2\|z\|}
{\|\eta\|_{\infty}(1-2\|\eta\|_{\infty}R_{\eta}^2\|z\|/R)}
\left(
\frac{\|\eta\|_{\infty}R_{\eta}}{R}
\right)^N
\left(\frac{R_{\eta}(1+\|\eta\|_{\infty})}{\|\eta\|_{\infty}}\right)^p
\\
& \leq &
\frac{8\|f\|_RR_{\eta}^2}
{\|\eta\|_{\infty}(1-2\|\eta\|_{\infty}R_{\eta}^2\|z\|/R)}
\left(
\frac{\|\eta\|_{\infty}R_{\eta}\|z\|}{R}
\right)^N
\sum_{p=0}^{N-1}
\left(\frac{R_{\eta}(1+\|\eta\|_{\infty})^2}{\|\eta\|_{\infty}}\right)^p
\\
& = &
\frac{8\|f\|_RR_{\eta}^2}
{\|\eta\|_{\infty}(1-2\|\eta\|_{\infty}R_{\eta}^2\|z\|/R)}
\left(
\frac{\|\eta\|_{\infty}R_{\eta}\|z\|}{R}
\right)^N
\frac{((1+\|\eta\|_{\infty})^2R_{\eta}/\|\eta\|_{\infty})^N-1}
{(1+\|\eta\|_{\infty})^2R_{\eta}/\|\eta\|_{\infty}-1}
\\
& \leq &
\frac{16\|f\|_RR_{\eta}}
{(1+\|\eta\|_{\infty})^2(1-2\|\eta\|_{\infty}R_{\eta}^2\|z\|/R)}
\left(
\frac{R_{\eta}^2(1+\|\eta\|_{\infty})^2\|z\|}{R}
\right)^N
.
\end{eqnarray*}
If we set
\begin{eqnarray*}
R\;=\;
R_{\eta,K} & := &
4\left(1+\|\eta\|_{\infty}\right)^2R_{\eta}^2
\sup_{z\in K}\|z\|
\,,
\end{eqnarray*}
in particular
$2\|\eta\|_{\infty}R_{\eta}^2\|z\|/R_{\eta,K}\leq1/2<1$ and
\begin{eqnarray}\label{plus}
\sup_{z\in K}|R_N(f;\eta)(z)|
& \leq &
\frac{32R_{\eta}\|f\|_{R_{\eta,K}}}
{(1+\|\eta\|_{\infty})^2}
\frac{1}{4^N}
\;\xrightarrow[N\rightarrow\infty]{}\;
0\,,
\end{eqnarray}
and the proof of Theorem~\ref{equivbounded} is achieved.

\end{proof}

Furthermore, (\ref{plus}) yields to a precision for the convergence of
$E_N(f;\eta)$.

\begin{corollary}\label{velocidad}

Assume that $\{\eta_j\}_{j\geq1}$ satisfies~(\ref{criter}).
Let be $\mathcal{K}\subset\mathcal{O}\left(\C^2\right)$
(resp. $K\subset\C^2$)
a compact subset. Then there exists
$C_{\mathcal{K},K}$ such that, for all
$N\geq1$,
\begin{eqnarray*}
\sup_{f\in\mathcal{K}}
\sup_{z\in K}
\left|
f(z)-E_N(f;\eta)(z)
\right|
& \leq &
\frac{C_{\mathcal{K},K}}{4^N}
\,.
\end{eqnarray*}

\end{corollary}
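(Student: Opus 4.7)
The plan is to combine the effective estimate~(\ref{plus}) from the proof of Theorem~\ref{equivbounded} with a direct Cauchy bound on the Taylor tail, and then exploit compactness of $\mathcal{K}$ in $\mathcal{O}(\C^2)$ to make all constants uniform in $f$. Starting from the identity~(\ref{relation}), we have
\begin{eqnarray*}
f(z)-E_N(f;\eta)(z) & = & -R_N(f;\eta)(z)+\sum_{k+l\geq N}a_{k,l}z_1^kz_2^l,
\end{eqnarray*}
so the task reduces to controlling each of the two pieces on the right uniformly for $f\in\mathcal{K}$ and $z\in K$.

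For the first piece, estimate~(\ref{plus}) established in the previous proof already yields $\sup_{z\in K}|R_N(f;\eta)(z)|\leq C_1\|f\|_{R_{\eta,K}}/4^N$ for some constant $C_1$ depending only on $\eta$ and $K$ (via $R_{\eta,K}=4(1+\|\eta\|_\infty)^2 R_\eta^2\sup_K\|z\|$). For the Taylor tail, let $M_K:=\sup_{z\in K}\max(|z_1|,|z_2|)$ and apply the usual Cauchy estimate $|a_{k,l}|\leq\|f\|_R/R^{k+l}$ with $R:=8M_K$. Then for every $z\in K$,
\begin{eqnarray*}
\left|\sum_{k+l\geq N}a_{k,l}z_1^kz_2^l\right| & \leq & \|f\|_R\sum_{m\geq N}(m+1)\left(\frac{M_K}{R}\right)^m \;=\; \|f\|_R\sum_{m\geq N}\frac{m+1}{8^m} \;\leq\; \frac{C_2\|f\|_R}{4^N},
\end{eqnarray*}
for some absolute constant $C_2$, since $\sum_{m\geq N}(m+1)8^{-m}=O(N\cdot 8^{-N})=O(4^{-N})$.

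It remains to make the constants independent of $f\in\mathcal{K}$. Setting $R^*:=\max(R_{\eta,K},8M_K)$, we have $\|f\|_{R_{\eta,K}},\|f\|_R\leq\|f\|_{R^*}$, and compactness of $\mathcal{K}$ in $\mathcal{O}(\C^2)$ (with its natural topology of uniform convergence on compact subsets of $\C^2$) ensures
\begin{eqnarray*}
M_{\mathcal{K}}\;:=\;\sup_{f\in\mathcal{K}}\|f\|_{R^*} & < & +\infty,
\end{eqnarray*}
since $\{z\in\C^2:\max(|z_1|,|z_2|)\leq R^*\}$ is compact. Combining everything yields the desired bound with $C_{\mathcal{K},K}:=(C_1+C_2)M_{\mathcal{K}}$.

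There is no real obstacle here beyond careful bookkeeping: the hard analytic work was already carried out in establishing the geometric decay~(\ref{plus}) in the proof of Theorem~\ref{equivbounded}; the role of the corollary is merely to record this effective rate and to observe that compactness of $\mathcal{K}$ in $\mathcal{O}(\C^2)$ trivially upgrades the $f$-dependent constant $\|f\|_{R^*}$ to a uniform one.
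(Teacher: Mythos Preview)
Your proof is correct and follows exactly the same approach as the paper: split $f-E_N$ via the identity~(\ref{relation}), invoke the geometric estimate~(\ref{plus}) for $R_N$, control the Taylor tail by Cauchy estimates, and use compactness of $\mathcal{K}$ to make $\|f\|_R$ uniform. The paper's proof is a single sentence indicating precisely these ingredients; you have simply fleshed out the details.
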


\begin{proof}

It follows from above and the fact that
$f(z)=E_N(f;\eta)(z)-R_N(f;\eta)(z)+\sum_{k+l\geq N}a_{k,l}z_1^kz_2^l$,
with
$|a_{k,l}|\leq\|f\|_R/R^{k+l}$, $\forall\,R\geq1$.

\end{proof}

\bigskip

\section{Proof of Theorems~\ref{equivnodense} and~\ref{superbe}}\label{proofs}

\subsection{Proof of Theorem \ref{superbe} when
$\{\eta_j\}_{j\geq1}$ is bounded}\label{proofsuperbounded}

\subsubsection{An equivalent condition for
$\{\eta\}_{j\geq1}$ to be real-analytically interpolated}

We begin with giving the following definition.

\begin{definition}\label{defunifDelta}

The (bounded) set
$\{\eta_j\}_{j\geq1}\subset\C$
will be said {\em of uniform exponential $\Delta$}
if there exist $C_{\eta},\,R_{\eta}$ such that,
for all subsequence $(j_k)_{k\geq1}$ and
for all $p\geq0$,
\begin{eqnarray*}
\left|
\Delta_{p,(\eta_{j_p},\ldots,\eta_{j_1})}
\left(
\zeta\mapsto\overline{\zeta}
\right)
(\eta_{j_{p+1}})
\right|
& \leq &
C_{\eta}\,R_{\eta}^p\,.
\end{eqnarray*}

\end{definition}

This condition looks like~(\ref{criter}) from Theorem~\ref{equivbounded},
with the difference that the constant $R_{\eta}$ does not
depend on the subsequence $(\eta_{j_k})_{k\geq1}$. This uniform condition
for $\{\eta_j\}_{j\geq1}$ seems stronger, in particular as it is specified
by the following result.

\begin{proposition}\label{unifDelta}

The bounded set $\{\eta_j\}_{j\geq1}\subset\C$
is real-analytically interpolated if and only if
it is of uniform exponential $\Delta$.

\end{proposition}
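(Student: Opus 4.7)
The plan is to establish the two implications separately via integral-representation arguments.

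For the direction ``real-analytically interpolated $\Rightarrow$ uniform exponential $\Delta$'', I would cover the compact set $K:=\overline{\{\eta_j\}}$ by finitely many open disks $V_1,\ldots,V_M$ each carrying a holomorphic $g_i\in\mathcal{O}(V_i)$ with $g_i(\eta_j)=\overline{\eta_j}$ for $\eta_j\in V_i$, and glue them into a $C^\infty$ interpolant $G:=\sum_i\phi_i g_i$ via a partition of unity $\{\phi_i\}$ subordinate to $\{V_i\}$ on a neighborhood $\Omega$ of $K$. Then $G(\eta_j)=\overline{\eta_j}$ for all $j$ because $\sum_i\phi_i\equiv 1$ near $K$ and each nonzero summand already equals $\overline{\eta_j}$. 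The crucial step is to arrange that $\overline{\partial}G$ be supported in a closed set $F\subset\Omega$ with $d(F,K)>0$: this is possible because the hypothesis combined with the identity theorem forces the $g_i$ to agree on any overlap $V_i\cap V_{i'}$ which contains accumulation points of $\{\eta_j\}$, so the cover can be refined so that each connected component of some neighborhood of $K$ sits inside a single $V_i$ and $\phi_i$ is locally constant near $K$. For any distinct $w_1,\ldots,w_{p+1}\in\{\eta_j\}$, the Cauchy--Pompeiu representation on a bounded domain $D$ with $K\subset D$ and $\overline{D}\subset\Omega$ gives
\begin{eqnarray*}
\left[\overline{\zeta};w_1,\ldots,w_{p+1}\right]
& = &
\frac{1}{2i\pi}\oint_{\partial D}\frac{G(\zeta)\,d\zeta}{\prod_k(\zeta-w_k)}
-\frac{1}{\pi}\iint_D\frac{\overline{\partial}G(\zeta)\,d\lambda(\zeta)}{\prod_k(\zeta-w_k)},
\end{eqnarray*}
and on $\partial D$ one has $|\prod_k(\zeta-w_k)|\geq d(\partial D,K)^{p+1}$ while on $F$ one has $|\prod_k(\zeta-w_k)|\geq d(F,K)^{p+1}$; both integrals are thus bounded by $C\cdot R_{\eta}^{p+1}$ with $R_{\eta}:=\max(d(\partial D,K)^{-1},d(F,K)^{-1})$, yielding the desired uniform bound $|\Delta_p(\overline{\zeta})(\eta_{j_{p+1}})|\leq C_{\eta}R_{\eta}^p$.

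For the converse direction, isolated points of $K$ are trivial, so fix an accumulation point $\zeta_0\in K$ and extract distinct $(\eta_{j_k})$ with $|\eta_{j_k}-\zeta_0|\leq 2^{-k}$. Newton's form of the Lagrange polynomials through $(\eta_{j_1},\ldots,\eta_{j_n})$ reads
\begin{eqnarray*}
L_n(\zeta)
& = &
\sum_{p=0}^{n-1}\Delta_{p,(\eta_{j_p},\ldots,\eta_{j_1})}(\overline{\zeta})(\eta_{j_{p+1}})\prod_{k=1}^{p}(\zeta-\eta_{j_k}),
\end{eqnarray*}
and the uniform bound on divided differences together with $\prod_{k=1}^p|\zeta-\eta_{j_k}|\leq r^p\prod_{k\geq1}(1+2^{-k}/r)$ shows that $(L_n)$ is uniformly bounded on $D(\zeta_0,r)$ for any $r<1/R_{\eta}$. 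Since $L_n(\eta_{j_k})=\overline{\eta_{j_k}}$ for $n\geq k$ and $\{\eta_{j_k}\}$ accumulates at $\zeta_0\in D(\zeta_0,r)$, Vitali--Montel yields a limit $g\in\mathcal{O}(D(\zeta_0,r))$ with $g(\eta_{j_k})=\overline{\eta_{j_k}}$. For any other $\eta_l\in D(\zeta_0,r)\cap\{\eta_j\}$, reapply the argument to the enlarged sequence $(\eta_l,\eta_{j_1},\eta_{j_2},\ldots)$, also bounded by hypothesis; the identity theorem applied to the resulting limit $\tilde g$ (which equals $g$ on $\{\eta_{j_k}\}$) forces $g(\eta_l)=\tilde g(\eta_l)=\overline{\eta_l}$.

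The main obstacle is the forward direction, and specifically the refinement of the partition of unity so that $\overline{\partial}G$ vanishes on an open neighborhood of $K$ rather than merely at the interpolation points. This requires exploiting that the hypothesis forces $K$ to have a tame local 1-dimensional real-analytic structure, with distinct ``branches'' carrying distinct $g_i$ kept geometrically apart, so that compatible local charts can indeed be chosen.
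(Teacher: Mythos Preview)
Your converse direction (uniform exponential $\Delta$ $\Rightarrow$ real-analytically interpolated) is correct and in fact slightly tidier than the paper's version. Both build the Newton series on a small disk $D(\zeta_0,r)$ with $r<1/R_\eta$ and get a holomorphic limit $g$ interpolating one accumulating subsequence. To handle the remaining $\eta_l\in D(\zeta_0,r)$, the paper adjoins them one at a time, obtains a family $(g_r)_{r\geq1}$ uniformly bounded by $2C_\eta$, and extracts a final interpolant via Vitali--Montel; you instead rebuild the series with $\eta_l$ prepended and invoke the identity theorem to conclude $\tilde g=g$, which is more direct. (Incidentally, your product bound already gives absolute convergence of the Newton series, so Vitali--Montel is not really needed even for the first step.)

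The forward direction, however, has a genuine gap precisely where you flag it. Arranging $\overline\partial G\equiv0$ on a full neighborhood of $K=\overline{\{\eta_j\}}$ is equivalent to producing a \emph{single} holomorphic interpolant of $\zeta\mapsto\overline\zeta$ on a neighborhood of all of $K$. The hypothesis only furnishes local interpolants, and while the identity theorem forces agreement on overlaps that contain an accumulation point of $\{\eta_j\}$, it says nothing about overlaps meeting $K$ only in isolated points; your claim that ``each connected component of some neighborhood of $K$ sits inside a single $V_i$'' is false in general (take $K$ a real segment), and the suggested refinement via a ``tame 1-dimensional real-analytic structure'' is itself a nontrivial statement you would still have to prove. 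The Cauchy--Pompeiu route thus leads you straight into a sheaf-gluing problem that is at least as hard as the original estimate.

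The paper sidesteps this entirely. It never glues the local $g_{\zeta_l}$: after extracting a finite subcover with Lebesgue number $\varepsilon_0$, it proves the bound $|\Delta_p(\overline\zeta)|\leq C_\eta/\varepsilon_\eta^{\,p}$ by induction on $p$, using the symmetry of divided differences (Lemma~\ref{Deltapermut}) to split into two cases. If all $p+1$ nodes lie within $\varepsilon_0$ of one another they fall into a single chart $D(\zeta_l,\varepsilon_{\zeta_l})$, and the bound follows from the explicit formula of Lemma~\ref{Deltanal} applied to the Taylor expansion of $g_{\zeta_l}$. Otherwise some pair $\eta_{j_k},\eta_{j_l}$ is at distance $\geq\varepsilon_0$; permute these into the outermost slots and use
\[
\Delta_{p}(\overline\zeta)=\frac{\Delta_{p-1}(\overline\zeta)(\eta_{j_l})-\Delta_{p-1}(\overline\zeta)(\eta_{j_k})}{\eta_{j_l}-\eta_{j_k}},
\]
together with the induction hypothesis and $|\eta_{j_l}-\eta_{j_k}|\geq\varepsilon_0\geq2\varepsilon_\eta$. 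This dichotomy replaces your global Cauchy--Pompeiu estimate by a purely combinatorial recursion, and the gluing problem never arises.
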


We begin with the reciprocal sense of the equivalence.

\begin{lemma}

If $\{\eta_j\}_{j\geq1}$ is of uniform exponential $\Delta$,
then it is real-analytically interpolated.

\end{lemma}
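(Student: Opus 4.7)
The plan is to produce, for each $\zeta_0\in\overline{\{\eta_j\}_{j\geq1}}$, a holomorphic $g$ on a neighborhood of $\zeta_0$ as the sum of a Newton interpolation series for $\zeta\mapsto\overline{\zeta}$ based on a subsequence of $\{\eta_j\}_{j\geq 1}$ clustering at $\zeta_0$. If $\zeta_0$ is an isolated point of $\{\eta_j\}_{j\geq 1}$, then the constant $g\equiv\overline{\zeta_0}$ trivially works. Otherwise, set $R:=R_\eta$ and $\epsilon:=1/(4R)$, and extract a strictly increasing sequence of indices $(j_k)_{k\geq 1}$ with $\eta_{j_k}\to\zeta_0$ and $|\eta_{j_k}-\zeta_0|<\epsilon$ for every $k$. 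Define
\begin{equation*}
g(z):=\sum_{p=0}^{\infty}c_p\prod_{l=1}^{p}(z-\eta_{j_l}),
\qquad
c_p:=\Delta_{p,(\eta_{j_p},\ldots,\eta_{j_1})}\!\left(\overline{\zeta}\right)\!\left(\eta_{j_{p+1}}\right).
\end{equation*}
By Lemma~\ref{lagrangebis}, the $N$-th partial sum $g_N$ is the unique polynomial of degree $\leq N-1$ that coincides with $\overline{\zeta}$ at $\eta_{j_1},\ldots,\eta_{j_N}$.

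Convergence on $V:=\{z\in\C:|z-\zeta_0|<\epsilon\}$ is immediate from the uniform hypothesis: one has $|c_p|\leq C_\eta R^p$ and $|z-\eta_{j_l}|<2\epsilon$, so the $p$-th term is bounded by $C_\eta(2\epsilon R)^p=C_\eta/2^p$, whence $g\in\mathcal{O}(V)$. It remains to show that $g(\eta_j)=\overline{\eta_j}$ for every $\eta_j$ belonging to $V':=\{|z-\zeta_0|<\epsilon/2\}$. Fix such an $\eta_{j^*}$; if $\eta_{j^*}=\eta_{j_{l_0}}$ for some $l_0$, then $g_N(\eta_{j^*})=\overline{\eta_{j^*}}$ for all $N\geq l_0$ and the claim is immediate. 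Otherwise, the standard Newton error identity---obtained by applying Lemma~\ref{lagrangebis} to the $N+1$ nodes $\eta_{j_1},\ldots,\eta_{j_N},\eta_{j^*}$ and comparing with $g_N$---gives
\begin{equation*}
\overline{\eta_{j^*}}-g_N(\eta_{j^*})
=\Delta_{N,(\eta_{j_N},\ldots,\eta_{j_1})}\!\left(\overline{\zeta}\right)\!\left(\eta_{j^*}\right)\prod_{l=1}^{N}(\eta_{j^*}-\eta_{j_l}).
\end{equation*}
Since the finite set $\{\eta_{j_1},\ldots,\eta_{j_N},\eta_{j^*}\}$ can be re-embedded as the first $N+1$ terms of some infinite subsequence of $\{\eta_j\}_{j\geq 1}$, Lemma~\ref{Deltapermut} together with the uniform hypothesis bounds the divided difference above by $C_\eta R^N$. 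Combined with the estimate $|\eta_{j^*}-\eta_{j_l}|\leq\epsilon/2+\epsilon=3\epsilon/2$, the error is at most $C_\eta(3\epsilon R/2)^N=C_\eta(3/8)^N\to 0$, yielding $g(\eta_{j^*})=\overline{\eta_{j^*}}$.

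The main obstacle is precisely this last step: $g$ is built from one specific countable subsequence, yet must coincide with the a priori non-holomorphic function $\overline{\zeta}$ at \emph{every} $\eta_j$ near $\zeta_0$, not only at the $\eta_{j_k}$ used in its construction. The resolution hinges on the full strength of the hypothesis: the same constants $C_\eta,R_\eta$ control divided differences formed from \emph{any} $N+1$ distinct points of $\{\eta_j\}_{j\geq 1}$ (via Lemma~\ref{Deltapermut}), so the Newton remainder at any fresh $\eta_{j^*}\in V'$ can be driven to zero exponentially fast. This is precisely where the uniformity in the hypothesis is essential and where the pointwise criterion~(\ref{criter}) would not suffice.
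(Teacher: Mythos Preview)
Your proof is correct and takes a cleaner route than the paper's. Both arguments start identically: pick a subsequence $(\eta_{j_k})$ clustering at $\zeta_0$, form the Newton series, and use the uniform bound $|c_p|\le C_\eta R_\eta^p$ to get a holomorphic limit $g$ on $D(\zeta_0,1/(4R_\eta))$. The divergence is in how one shows that $g$ catches \emph{every} $\eta_j$ near $\zeta_0$, not just the chosen $\eta_{j_k}$.

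The paper does this indirectly: it adjoins one new point $\eta_{p_2}\in V\setminus\{\eta_{j_k}\}$ to the front of the sequence, rebuilds the Newton series to get a second holomorphic function $g_2$ (bounded by the same constant $2C_\eta$), iterates, and then invokes the Stieltjes--Vitali--Montel theorem on the resulting uniformly bounded family $(g_r)$ to extract a limit $g_\infty$ that interpolates everything. Your argument bypasses this machinery entirely: you use the Newton remainder identity
\[
\overline{\eta_{j^*}}-g_N(\eta_{j^*})
=\Delta_{N,(\eta_{j_N},\ldots,\eta_{j_1})}\!\left(\overline{\zeta}\right)\!(\eta_{j^*})\prod_{l=1}^N(\eta_{j^*}-\eta_{j_l}),
\]
observe (via Lemma~\ref{Deltapermut} and the uniformity over all subsequences) that the divided difference on the right is still bounded by $C_\eta R_\eta^N$, and conclude that the \emph{original} $g$ already interpolates every $\eta_{j^*}$ in the smaller disc. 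This is more economical---no normal-families argument, no auxiliary sequence of functions---and it makes transparent exactly where the uniformity is used. The paper's approach, by contrast, has the minor advantage of never needing to write down the remainder formula explicitly, but at the cost of a compactness argument that your proof shows to be unnecessary.
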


\begin{proof}

We want to prove that, for all
$\zeta_0\in\overline{\{\eta_j\}_{j\geq1}}$,
there exist $V\in\mathcal{V}\left(\zeta_0\right)$ and
$g\in\mathcal{O}(V)$ such that,
$\forall\,\eta_j\in V$,
$\overline{\eta_j}=g(\eta_j)$.

If $\zeta_0$ is isolated, then
$\zeta_0=\eta_{j_0}$. Let be
$V\in\mathcal{V}(\eta_{j_0})$ such that
$V\cap\{\eta_j\}_{j\geq1}=\{\eta_{j_0}\}$.
One can choose the constant function
$g(\zeta):=\overline{\eta_{j_0}}$.

Otherwise $\zeta_0$ is a limit point. 
Let be $V=D(\zeta_0,1/(4R_{\eta}))$ and
let be
$(\eta_{j_k})_{k\geq1}$ a sequence that converges to $\zeta_0$.
We can assume that $\{\eta_{j_k}\}_{k\geq1}\subset V$ (by removing
a finite number of points if necessary).
Consider the Lagrange interpolation polynomial that is also
by Lemma~\ref{lagrangebis}
\begin{eqnarray*}
P_N(\zeta) & = &
\sum_{p=1}^N
\prod_{k=1,k\neq p}^N
\frac{\zeta-\eta_{j_k}}{\eta_{j_p}-\eta_{j_k}}
\,\overline{\eta_{j_p}}
\\
& = &
\sum_{p=0}^{N-1}
\prod_{k=1}^p
(z-\eta_{j_k})\,
\Delta_{p,(\eta_{j_p},\ldots,\eta_{j_1})}
\left(\zeta\mapsto\overline{\zeta}\right)
(\eta_{j_{p+1}})\,.
\end{eqnarray*}
For all $\zeta\in V$ and $k\geq1$, one has
$|\zeta-\eta_{j_k}|\leq
|(\zeta-\zeta_0)-(\eta_{j_k}-\zeta_0)|
\leq
1/(2R_{\eta})$
then
\begin{eqnarray*}
\sum_{p\geq0}
\sup_{\zeta\in V}
\left|
\prod_{k=1}^p
(\zeta-\eta_{j_k})\,
\Delta_{p,(\eta_{j_p},\ldots,\eta_{j_1})}
\left(\overline{\zeta}\right)
(\eta_{j_{p+1}})
\right|
& \leq &
\sum_{p\geq0}
\left(\frac{1}{2R_{\eta}}\right)^p
\,C_{\eta}R_{\eta}^p
\\
& = &
C_{\eta}
\sum_{p\geq0}
\frac{1}{2^p}
\\
& = &
2C_{\eta}\,.
\end{eqnarray*}
The series
$\sum_{p\geq0}
\prod_{k=1}^p
(\zeta-\eta_{j_k})\,
\Delta_{p,(\eta_{j_p},\ldots,\eta_{j_1})}
\left(\overline{\zeta}\right)
(\eta_{j_{p+1}})$
is absolutely convergent on $V$. The sequence
$(P_N)_{N\geq1}$ uniformly converges on $V$ to a function
$g_1\in\mathcal{O}(V)$.
Moreover 
\begin{eqnarray*}
\sup_{\zeta\in V}|g_1(\zeta)|
& \leq &
2C_{\eta}
\end{eqnarray*}
and for all $k\geq1$,
\begin{eqnarray*}
g_1(\eta_{j_k})\;=\;
\lim_{N\rightarrow\infty}
P_N(\eta_{j_k})
\;=\;
\lim_{N\rightarrow\infty,\,N\geq k}
P_N(\eta_{j_k})
\;=\;
\overline{\eta_{j_k}}
\,,
\end{eqnarray*}
ie $g_1$ is a holomorphic and bounded function on $V$
that interpolates the values
$\overline{\eta_{j_k}}$ on the points
$\eta_{j_k},\,k\geq1$.

Now if
$V\cap\{\eta_j\}_{j\geq1}=\{\eta_{j_k}\}_{k\geq1}$,
the function $g_1$ satisfies the required conditions.
Otherwise we set
$S_1:=(\eta_{j_k})_{k\geq1}$,
choose an element
$\eta_{p_2}\in V\setminus S_1$ and set
$S_2:=(\eta_{p_2},S_1):=(\eta_{p_2},\eta_{j_1},\ldots,\eta_{j_k},\ldots)$.
Then $S_2\subset V$ is another subsequence of
$\{\eta_j\}_{j\geq1}$ that converges to $\zeta_0$. One can construct
the same sequence of Lagrange polynomials that converges to a
function $g_2\in\mathcal{O}(V)$ (since
$\{\eta_j\}_{j\geq1}$ is of uniform exponential $\Delta$). With the same argument
$g_2$ is bounded  (by $2C_{\eta}$) and interpolates the values
$\overline{\eta_j}$ on the points
$\eta_{p_2},\eta_{j_1},\ldots,\eta_{j_k},\ldots$.

We can follow this process as long as there is
$\eta_j\in V$ that is not reached. If there is
$r\geq1$ such that $S_r=V\cap\{\eta_j\}_{j\geq1}$, the associate function
$g_r$ will satisfy the required conditions. Otherwise we can construct
a sequence
$(S_r,g_r)_{r\geq1}$ with
$S_{r}=(\eta_{p_{r}},S_{r-1})$ and
$g_r\in\mathcal{O}(V)$, bounded that interpolates
the values $\overline{\eta_j}$ on $S_r$.
Since $\{\eta_j\}_{j\geq1}$ is countable,
for all $\eta_j\in V$, there exists $r\geq1$ such that
$\eta_j\in S_r$ and
$\forall\,s\geq r$,
$g_s(\eta_j)=g_r(\eta_j)=\overline{\eta_j}$.

On the other hand the sequence $(g_r)_{r\geq1}$ is uniformly
bounded on $V$ (by $2C_{\eta}$). By the
Stiltjes-Vitali-Montel Theorem, there is a subsequence
$(g_{r_l})_{l\geq1}$ that uniformly converges on $V$ to a function
$g_{\infty}\in\mathcal{O}(V)$. So $\forall\,\eta_j\in V$,
$\exists\,r\geq1$, $\eta_j\in S_r$ and
\begin{eqnarray*}
g_{\infty}(\eta_j)\;=\;
\lim_{l\rightarrow\infty}g_{r_l}(\eta_j)
\;=\;
\lim_{l\rightarrow\infty,\,r_l\geq r}g_{r_l}(\eta_j)
\;=\;
\lim_{l\rightarrow\infty,\,r_l\geq r}\overline{\eta_j}
\;=\;
\overline{\eta_j}\,,
\end{eqnarray*}
ie the function $g_{\infty}$ interpolates the values
$\overline{\eta_j}$ on all the points
$\eta_j\in V$.

\end{proof}

Now we prove the first sense of the equivalence.

\begin{lemma}

If $\{\eta_j\}_{j\geq1}$ is real-analytically interpolated,
then it is of uniform exponential $\Delta$.

\end{lemma}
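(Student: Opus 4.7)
The plan is to use the compactness of $S:=\overline{\{\eta_j\}_{j\ge1}}$ to replace the local holomorphic interpolants guaranteed by Definition~\ref{anal} with a uniform, finite collection, and then to induct on $p$ with a dichotomy based on the diameter of the $p+1$ nodes. First, for each $\zeta\in S$ pick the disk $V^\zeta$ and $g^\zeta\in\mathcal{O}(V^\zeta)$ given by Definition~\ref{anal}. Shrinking each $V^\zeta$ to a smaller concentric disk whose closure still lies in $V^\zeta$ and extracting a finite subcover of the compact set $S$, we obtain open disks $V_1,\ldots,V_m$ covering $S$, larger concentric disks $W_i$ with $\overline{V_i}\subset W_i$ on which the interpolants $g_i\in\mathcal{O}(W_i)$ are defined and uniformly bounded by some $M<\infty$, and a uniform positive lower bound $r$ on $\mathrm{dist}(\overline{V_i},\partial W_i)$. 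The Lebesgue number lemma then provides $\delta>0$ such that any subset of $S$ of diameter less than $\delta$ is contained in some $V_i$.

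Next, I prove by induction on $p\ge0$ the existence of constants $C_\eta,R_\eta$ such that
\[
\left|\Delta_{p,(\eta_{j_p},\ldots,\eta_{j_1})}\left(\zeta\mapsto\overline{\zeta}\right)(\eta_{j_{p+1}})\right|\le C_\eta R_\eta^p
\]
for every subsequence $(j_k)_{k\ge 1}$. The base case $p=0$ reduces to $|\overline{\eta_{j_1}}|\le\|\eta\|_\infty$. For the inductive step I consider two cases according to the diameter of $\{\eta_{j_k}\}_{k=1}^{p+1}$. In Case (a), the diameter is less than $\delta$, so all nodes lie in one $V_i$; since the divided difference depends only on the values at the nodes and $g_i$ agrees with $\overline\zeta$ there, I apply Lemma~\ref{Deltanal} (Taylor expansion of $g_i$ about the center of $W_i$), estimate the Taylor coefficients of $g_i$ by the Cauchy formula, and use the combinatorial identity of Lemma~\ref{combi} to obtain a geometric bound of the form $C_1 R_1^p$ depending only on $M$, $r$ and the radius of $W_i$. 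In Case (b), the diameter is at least $\delta$, so some pair $\eta_{j_a},\eta_{j_b}$ satisfies $|\eta_{j_a}-\eta_{j_b}|\ge\delta$; using Lemma~\ref{Deltapermut} to move this pair to the last two argument slots, the recursive definition~(\ref{defDelta}) gives $|\Delta_p|\le 2C_\eta R_\eta^{p-1}/\delta$ by the induction hypothesis. Choosing $R_\eta\ge\max(R_1,2/\delta)$ and $C_\eta$ large enough then closes the induction.

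The main obstacle is Case (b): after peeling off one recursion step I must bound two divided differences of order $p-1$ whose node sets are arbitrary subsets of $\{\eta_{j_1},\ldots,\eta_{j_{p+1}}\}$, and the inductive hypothesis, formulated uniformly over all subsequences, applies to them only because Lemma~\ref{Deltapermut} ensures that arbitrary reorderings do not change the value of the divided difference. Without it, substantial combinatorial bookkeeping would be required to match the ordering of the reduced node set with the canonical one in the hypothesis. Case (a), by contrast, is a direct computation in a single chart and parallels the estimate via Lemma~\ref{Deltanal} already used for the converse direction, but applied to the local analytic extension $g_i$ rather than to a single Lagrange polynomial.
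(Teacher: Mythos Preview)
Your proposal is correct and follows essentially the same approach as the paper: a compactness/Lebesgue-number reduction to a finite family of local holomorphic interpolants, followed by an induction on $p$ with the same dichotomy (all nodes within the Lebesgue number $\Rightarrow$ replace $\overline{\zeta}$ by $g_i$ and estimate via Lemma~\ref{Deltanal}, Lemma~\ref{combi}, and Cauchy bounds; otherwise permute a far-apart pair to the top via Lemma~\ref{Deltapermut} and recurse). The only cosmetic difference is that the paper isolates your Case~(a) as a separate preliminary lemma before running the induction, whereas you fold it into the inductive step.
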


\begin{proof}

For all
$\zeta\in\overline{\{\eta_j\}_{j\geq1}}$, 
reducing $V_{\zeta}$ if necessary, we can assume that
$V_{\zeta}=D(\zeta,3\varepsilon_{\zeta})$. Since
\begin{eqnarray*}
\overline{\{\eta_j\}_{j\geq1}}
& \subset &
\bigcup_{\zeta\in\overline{\{\eta_j\}_{j\geq1}}}D(\zeta,\varepsilon_{\zeta})
\end{eqnarray*}
and
$\overline{\{\eta_j\}_{j\geq1}}$ is a compact subset, there exists a
finite number $\zeta_1,\ldots,\zeta_L$ such that
\begin{eqnarray*}
\overline{\{\eta_j\}_{j\geq1}}
& \subset &
\bigcup_{l=1}^L
D(\zeta_l,\varepsilon_{\zeta_l})
\,.
\end{eqnarray*}
There also exists
$\varepsilon_0$ such that, for all 
$\zeta\in\overline{\{\eta_j\}_{j\geq1}}$,
$\exists\,l,\,1\leq l\leq L$,
$D(\zeta,\varepsilon_0)\subset D(\zeta_l,\varepsilon_{\zeta_l})$.

Now we begin with giving the proof in the following special case.

\begin{lemma}\label{aux2}

Let be $p\geq1$ and
$\eta_{j_1},\ldots,\eta_{j_{p+1}}$ such that,
for all $1\leq k<l\leq p+1$,
$|\eta_{j_k}-\eta_{j_l}|<\varepsilon_0$.
Then $\exists\,C_{\eta},\,\varepsilon_{\eta}$
(that do not depend on $p$), 
\begin{eqnarray*}
\left|
\Delta_{p,(\eta_{j_p},\ldots,\eta_{j_1})}
\left(\overline{\zeta}\right)
(\eta_{j_{p+1}})
\right|
& \leq &
\frac{C_{\eta}}{\varepsilon_{\eta}^p}\,.
\end{eqnarray*}

\end{lemma}

\begin{proof}

In particular,
$\eta_{j_2},\ldots,\eta_{j_{p+1}}\in D(\eta_{j_1},\varepsilon_0)$.
On the other hand,
$\exists\,l,\,1\leq l\leq L$,
$D(\eta_{j_1},\varepsilon_0)\subset D(\zeta_l,\varepsilon_{\zeta_l})$.
Since the function
$g_{\zeta_l}\in\mathcal{O}\left(D(\zeta_l,3\varepsilon_{\zeta_l})\right)$
interpolates the $\overline{\eta_j}$ on the points
$\eta_j\in D(\zeta_l,3\varepsilon_{\zeta_l})$,
one has
\begin{eqnarray*}
\Delta_{p,(\eta_{j_p},\ldots,\eta_{j_1})}
\left(\zeta\mapsto\overline{\zeta}\right)
(\eta_{j_{p+1}})
& = &
\Delta_{p,(\eta_{j_p},\ldots,\eta_{j_1})}
\left(\zeta\mapsto g_{\zeta_l}(\zeta)\right)
(\eta_{j_{p+1}})
\,.
\end{eqnarray*}
Let consider for all
$|\zeta-\zeta_l|<3\varepsilon_{\zeta_l}$,
\begin{eqnarray*}
g_{\zeta_l}(\zeta) & = &
\sum_{n\geq0}a_n(\zeta_l)(\zeta-\zeta_l)^n
\end{eqnarray*}
the Taylor expansion of $g_{\zeta_l}$ on $\zeta_l$. Since
$\eta_{j_1},\ldots,\eta_{j_p},\eta_{j_{p+1}}
\in D(\zeta_l,\varepsilon_{\zeta_l})$, it follows by
Lemmas~\ref{Deltanal} and~\ref{combi} that
\begin{eqnarray*}
\left|
\Delta_{p,(\eta_{j_p},\ldots,\eta_{j_1})}
\left(\zeta\mapsto g_{\zeta_l}(\zeta)\right)
(\eta_{j_{p+1}})
\right|
& \leq &
\;\;\;\;\;\;\;\;\;\;\;\;\;\;\;\;\;\;\;\;\;\;\;\;\;\;\;\;\;\;\;\;\;\;\;\;\;\;\;\;
\;\;\;\;\;\;\;\;\;\;\;\;\;\;\;\;\;\;\;\;\;\;\;\;\;\;
\end{eqnarray*}
\begin{eqnarray*}
& \leq &
\sum_{n\geq p}
|a_n(\zeta_l)|
\sum_{l_1=0}^{n-p}
|\eta_{j_1}-\zeta_l|^{n-p-l_1}
\cdots
\sum_{l_p=0}^{l_{p-1}}
|\eta_{j_p}-\zeta_l|^{l_{p-1}-l_p}
|\eta_{j_{p+1}}-\zeta_l|^{l_p}
\\
& \leq &
\sum_{n\geq p}
|a_n(\zeta_l)|
\sum_{l_1=0}^{n-p}
\cdots
\sum_{l_p=0}^{l_{p-1}}
\varepsilon_{\zeta_l}^{n-p-l_1+\cdots+l_{p-1}-l_p+l_p}
\\
& = &
\sum_{n\geq p}
|a_n(\zeta_l)|\,
\varepsilon_{\zeta_l}^{n-p}
\sum_{l_1=0}^{n-p}
\cdots
\sum_{l_p=0}^{l_{p-1}}1
\;=\;
\sum_{n\geq p}
|a_n(\zeta_l)|\,
\varepsilon_{\zeta_l}^{n-p}
\frac{n!}{p!\,(n-p)!}
\\
& = &
\frac{1}{p!}\frac{\partial^p}{\partial t^p}|_{t=\varepsilon_{\zeta_l}}
\left(
\sum_{n\geq0}|a_n(\zeta_l)|t^n
\right)
\;=\;
\frac{1}{2i\pi}
\int_{|t-\zeta_l|=2\varepsilon_{\zeta_l}}
\frac{\sum_{n\geq0}|a_n(\zeta_l)|t^n\;dt}{(t-\varepsilon_{\zeta_l})^{p+1}}
\\
& \leq &
2\varepsilon_{\zeta_l}
\sup_{|t|=2\varepsilon_{\zeta_l}}
\frac{\sum_{n\geq0}|a_n(\zeta_l)|\,|t|^n}{(|t|-\varepsilon_{\zeta_l})^{p+1}}
\;=\;
\frac{2}{\varepsilon_{\zeta_l}^p}
\sum_{n\geq0}
|a_n(\zeta_l)|(2\varepsilon_{\zeta_l})^n
\,.
\end{eqnarray*}

For all $l=1,\ldots,L$, we set
\begin{eqnarray*}
M_{\zeta_l}\;:=\;2
\sum_{n\geq0}
|a_n(\zeta_l)|(2\varepsilon_{\zeta_l})^n
\;<\;+\infty
\,,
\end{eqnarray*}
\begin{eqnarray*}
C_{\eta} & := &
\max_{1\leq l\leq L}M_{\zeta_l}
\;<\;+\infty
\end{eqnarray*}
and
\begin{eqnarray*}
\varepsilon_{\eta} & := &
\min\left\{
\varepsilon_0/2\,,\;
\min_{1\leq l\leq L}\varepsilon_{\zeta_l}
\right\}
\;>\;
0\,.
\end{eqnarray*}
Thus
\begin{eqnarray*}
\left|
\Delta_{p,(\eta_{j_p},\ldots,\eta_{j_1})}
\left(\overline{\zeta}\right)
(\eta_{j_{p+1}})
\right|
& \leq &
\frac{C_{\eta}}{\varepsilon_{\eta}^p}
\,.
\end{eqnarray*}

\end{proof}

Now we can give the proof in the general case by induction on
$p\geq0$ with the above choice of
$C_{\eta},\,\varepsilon_{\eta}$.

Let be $p=0$ and $j_1\geq1$. Then
$\exists\,l,\,1\leq l\leq L$,
$\eta_{j_1}\in D(\zeta_l,\varepsilon_{\zeta_l})$, thus
\begin{eqnarray*}
\left|
\Delta_0
\left(\overline{\zeta}\right)(\eta_{j_1})
\right|
& = &
\left|\overline{\eta_{j_1}}\right|
\;=\;
|g_{\zeta_l}(\eta_{j_1})|
\;\leq\;
\sum_{n\geq0}|a_n(\zeta_l)|\,|\eta_{j_1}-\zeta_l|^n
\\
& \leq &
\sum_{n\geq0}|a_n(\zeta_l)|\,\varepsilon_{\zeta_l}^n
\;\leq\;
M_{\zeta_l}
\;\leq\;
C_{\eta}
\,.
\end{eqnarray*}

Now if it is true for $p-1\geq0$, let be
$p\geq1$ and
$\eta_{j_1},\ldots,\eta_{j_{p+1}}
\in\{\eta_j\}_{j\geq1}$. If for all
$1\leq k<l\leq L$, 
$|\eta_{j_k}-\eta_{j_l}|<\varepsilon_0$,
then it is still true by Lemma~\ref{aux2}. 
Otherwise, $\exists\,k,l$ with
$1\leq k<l\leq L$ such that
$|\eta_{j_k}-\eta_{j_l}|\geq\varepsilon_0$, then
by Lemma~\ref{Deltapermut}
\begin{eqnarray*}
\left|
\Delta_{p,(\eta_{j_p},\ldots,\eta_{j_1})}
\left(\overline{\zeta}\right)
(\eta_{j_{p+1}})
\right|
& = &
\left|
\Delta_{p,(\eta_{j_k},\eta_{j_r},\ldots,\eta_{j_s})}
\left(\overline{\zeta}\right)
(\eta_{j_{l}})
\right|
\\
& = &
\left|
\frac{
\Delta_{p-1,(\eta_{j_r},\ldots,\eta_{j_s})}
\left(\overline{\zeta}\right)
(\eta_{j_{l}})
-
\Delta_{p-1,(\eta_{j_r},\ldots,\eta_{j_s})}
\left(\overline{\zeta}\right)
(\eta_{j_{k}})
}
{\eta_{j_l}-\eta_{j_k}}
\right|
\\
& \leq &
\frac{
\left|
\Delta_{p-1,(\eta_{j_r},\ldots,\eta_{j_s})}
\left(\overline{\zeta}\right)
(\eta_{j_{l}})
\right|
+
\left|
\Delta_{p-1,(\eta_{j_r},\ldots,\eta_{j_s})}
\left(\overline{\zeta}\right)
(\eta_{j_{k}})
\right|
}
{\varepsilon_0}
\\
& \leq &
\frac{2C_{\eta}/\varepsilon_{\eta}^{p-1}}{\varepsilon_0}
\;\leq\;
\frac{C_{\eta}}{\varepsilon_{\eta}^p}
\,.
\end{eqnarray*}

\end{proof}

\subsubsection{Proof of Theorem \ref{superbe} when
$\|\eta\|_{\infty}<+\infty$}\label{proofsuperbebounded}

Now we will give the proof of Theorem~\ref{superbe} in the
special case when
$\{\eta_j\}_{j\geq1}$ is bounded.

\begin{proof}

One has by~(\ref{lagrangebisRN}) from the proof of
Lemma~\ref{condnec1},
\begin{eqnarray*}
R_N(f;\eta)(z) & = &
\sum_{p=0}^{N-1}
z_2^{N-1-p}
\prod_{j=1}^p(z_1-\eta_jz_2)
\Delta_{p,(\eta_p,\ldots,\eta_1)}
\left(
\zeta\mapsto r_{N}(\zeta,z)
\right)(\eta_{p+1})
\,.
\end{eqnarray*}
We know that
$\forall\,\zeta_0\in\overline{\{\eta_j\}_{j\geq1}}$,
$\exists\,V_{\zeta_0}\in\mathcal{V}(\zeta_0)$,
$g_{\zeta_0}\in\mathcal{O}\left(V_{\zeta_0}\right)$,
such that
$\forall\,\eta_j\in V_{\zeta_0}$,
$\overline{\eta_j}=g_{\zeta_0}(\eta_j)$.
In particular,
$\overline{\zeta_0}=g_{\zeta_0}(\zeta_0)$. Indeed, if
$\zeta_0$ is isolated, 
$\zeta_0=\eta_{j_0}$. Reducing
$V_{\zeta_0}$ if necessary, one has
$\{\eta_j\}_{j\geq1}\cap V_{\zeta_0}=\{\eta_{j_0}\}$
then
$\overline{\zeta_0}=\overline{\eta_{j_0}}
=g_{\zeta_0}(\eta_{j_0})=g_{\zeta_0}(\zeta_0)$.
Otherwise $\zeta_0$ is a limit point so there is a
subsequence
$(\eta_{j_k})_{k\geq1}$ that converges to
$\zeta_0$, then
$\overline{\zeta_0}=\lim_{k\rightarrow\infty}\overline{\eta_{j_k}}
=\lim_{k\rightarrow\infty}g_{\zeta_0}(\eta_{j_k})
=g_{\zeta_0}(\zeta_0)$.

In particular,
$\zeta_0g_{\zeta_0}(\zeta_0)=|\zeta_0|^2\geq0$
then reducing $V_{\zeta_0}$ if necessary,
$\forall\,\zeta\in V_{\zeta_0}$,
$\Re\left(\zeta g_{\zeta_0}(\zeta)\right)>-1/2$.
Finally, reducing $V_{\zeta_0}$ again if necessary, one can choose
$V_{\zeta_0}=D(\zeta_0,4\varepsilon_{\zeta_0})$ such that
\begin{eqnarray}
\begin{cases}
\forall\,\eta_j\in V_{\zeta_0},\;
\overline{\eta_j}=g_{\zeta_0}(\eta_j)\,,
\\
\forall\,\zeta\in V_{\zeta_0},\;
\Re\left(\zeta g_{\zeta_0}(\zeta)\right)>-1/2\,,
\\
\|g_{\zeta_0}\|_{V_{\zeta_0}}:=
\sup_{\zeta\in V_{\zeta_0}}
\left|
g_{\zeta_0}(\zeta)
\right|
<+\infty\,.
\end{cases}
\end{eqnarray}

Now let consider for all
$\zeta\in V_{\zeta_0}$
and $z\in\C^2$,
\begin{eqnarray}
\widetilde{r}_{N,\zeta_0}(\zeta,z)
& := &
\sum_{m\geq N}
\left(
\frac{z_2+z_1g_{\zeta_0}(\zeta)}
{1+\zeta g_{\zeta_0}(\zeta)}
\right)^{m-N+1}
\sum_{k+l=m}a_{k,l}\zeta^k
\,.
\end{eqnarray}
This function is well-defined for all
$N\geq1$,
$\zeta\in V_{\zeta_0}$ and $z\in\C^2$
since
\begin{eqnarray*}
\sum_{m\geq N}
\left|
\left(
\frac{z_2+z_1g_{\zeta_0}(\zeta)}
{1+\zeta g_{\zeta_0}(\zeta)}
\right)^{m-N+1}
\sum_{k+l=m}a_{k,l}\zeta^k
\right|
& \leq &
\;\;\;\;\;\;\;\;\;\;\;\;\;\;\;\;\;\;\;\;\;\;\;\;\;\;\;\;\;\;\;\;\;\;
\;\;\;\;\;\;\;\;\;\;\;\;\;\;\;\;\;\;\;\;\;
\end{eqnarray*}
\begin{eqnarray*}
& \leq &
\sum_{m\geq N}
\left(
\frac{\|z\|\sqrt{1+|g_{\zeta_0}(\zeta)|^2}}
{|1+\zeta g_{\zeta_0}(\zeta)|}
\right)^{m-N+1}
\sum_{k+l=m}|a_{k,l}||\zeta|^k
\\
& \leq &
\sum_{m\geq N}
\left(
\frac{\|z\|(1+\|g_{\zeta_0}\|_{V_{\zeta_0}})}
{|\Re(1+\zeta g_{\zeta_0}(\zeta))|}
\right)^{m-N+1}
\sum_{k+l=m}
\frac{\|f\|_R}{R^{k+l}}
\|\eta\|_{\infty}^k
\\
& \leq &
\|f\|_R
\sum_{m\geq N}
\left(
2\|z\|(1+\|g_{\zeta_0}\|_{V_{\zeta_0}})
\right)^{m-N+1}
\frac{1}{R^m}
\frac{(2+\|\eta\|_{\infty})^{m+1}-1}{(2+\|\eta\|_{\infty})-1}
\\
& \leq &
\frac{2\|f\|_R}{(2\|z\|(1+\|g_{\zeta_0}\|_{V_{\zeta_0}}))^{N-1}}
\sum_{m\geq N}
\left(
\frac{2\|z\|(2+\|\eta\|_{\infty})
(1+\|g_{\zeta_0}\|_{V_{\zeta_0}})}{R}
\right)^m
\\
& = &
\frac{2\|f\|_R}{(2\|z\|(1+\|g_{\zeta_0}\|_{V_{\zeta_0}}))^{N-1}}
\;
\frac{
\left(
2(2+\|\eta\|_{\infty})
\|z\|(1+\|g_{\zeta_0}\|_{V_{\zeta_0}})/R
\right)^N}
{1-2(2+\|\eta\|_{\infty})
\|z\|(1+\|g_{\zeta_0}\|_{V_{\zeta_0}})/R}
\,,
\end{eqnarray*}
for all
$R>2(2+\|\eta\|_{\infty})\|z\|(1+\|g_{\zeta_0}\|_{V_{\zeta_0}})$.
Moreover
$\widetilde{r}_{N,\zeta_0}\in\mathcal{O}\left(V_{\zeta_0}\times\C^2\right)$
and for any compact subset
$K\in\C^2$ and all
$R\geq4(2+\|\eta\|_{\infty})\|z\|_K(1+\|g_{\zeta_0}\|_{V_{\zeta_0}})$,
\begin{eqnarray}\nonumber
\sup_{(\zeta,z)\in V_{\zeta_0}\times K}
\left|
\widetilde{r}_{N,\zeta_0}(\zeta,z)
\right|
& \leq &
\frac{4\|f\|_R\|z\|_K(1+\|g_{\zeta_0}\|_{V_{\zeta_0}})}
{1-2(2+\|\eta\|_{\infty})\|z\|_K(1+\|g_{\zeta_0}\|_{V_{\zeta_0}})/R}
\left(
\frac{2+\|\eta\|_{\infty}}{R}
\right)^N
\\\label{estimrtildeNzeta}
& \leq &
8\|f\|_R\|z\|_K(1+\|g_{\zeta_0}\|_{V_{\zeta_0}})
\left(
\frac{2+\|\eta\|_{\infty}}{R}
\right)^N
\,.
\end{eqnarray}
Moreover, $\forall\,\eta_j\in V_{\zeta_0}$,
\begin{eqnarray}\label{rNtildeinterpol}
r_N(\eta_j,z)
& = &
\widetilde{r}_{N,\zeta_0}(\eta_j,z)
\,.
\end{eqnarray}

Now let consider the Taylor expansion of 
$\widetilde{r}_{N,\zeta_0}(\cdot,z)$
on $V_{\zeta_0}$,
\begin{eqnarray*}
\widetilde{r}_{N,\zeta_0}(\zeta,z)
& = &
\sum_{n\geq0}
a_n(N,\zeta_0,z)(\zeta-\zeta_0)^n
\,.
\end{eqnarray*}
One has, for all $n\geq0$,
\begin{eqnarray*}
|a_n(N,\zeta_0,z)|\;=\;
\left|
\frac{1}{2i\pi}
\int_{|\zeta-\zeta_0|=3\varepsilon_{\zeta_0}}
\frac{\widetilde{r}_{N,\zeta_0}(\zeta,z)\,d\zeta}
{(\zeta-\zeta_0)^{n+1}}
\right|
\;\leq\;
\frac{\|\widetilde{r}_{N,\zeta_0}(\cdot,z)\|_{V_{\zeta_0}}}
{(3\varepsilon_{\zeta_0})^n}
\,,
\end{eqnarray*}
then
\begin{eqnarray*}
\sup_{z\in K}
|a_n(N,\zeta_0,z)|
& \leq &
\frac{\|\widetilde{r}_{N,\zeta_0}\|_{V_{\zeta_0}\times K}}
{(3\varepsilon_{\zeta_0})^n}
\,.
\end{eqnarray*}
It follows that
\begin{eqnarray}\nonumber
\sup_{z\in K}
\sum_{n\geq0}
|a(N,\zeta_0,z)|(2\varepsilon_{\zeta_0})^n
& \leq &
\sum_{n\geq0}
\|\widetilde{r}_{N,\zeta_0}\|_{V_{\zeta_0}\times K}
\left(\frac{2}{3}\right)^n
\\\label{MNzetaK}
& \leq &
3\|\widetilde{r}_{N,\zeta_0}\|_{V_{\zeta_0}\times K}
\;=:\;
M_{N,\zeta_0,K}
\,.
\end{eqnarray}

Since it is true for any
$\zeta_0\in\overline{\{\eta_j\}_{j\geq1}}$
and
$\overline{\{\eta_j\}_{j\geq1}}$ is compact,
there are
$\zeta_1,\ldots,\zeta_L$ such that
\begin{eqnarray*}
\overline{\{\eta_j\}_{j\geq1}}
& \subset &
\bigcup_{l=1}^L
D(\zeta_l,\varepsilon_{\zeta_l})
\,.
\end{eqnarray*}
Moreover, there is $\varepsilon_0>0$ such that,
for all 
$\zeta\in\overline{\{\eta_j\}_{j\geq1}}$,
$\exists\,l,\,1\leq l\leq L$,
$D(\zeta,\varepsilon_0)\subset
D(\zeta_l,\varepsilon_{\zeta_l})$.
Set
\begin{eqnarray}\label{MNK}
M_{N,K}
& := &
2\max_{1\leq l\leq L}
M_{N,\zeta_l,K}
\end{eqnarray}
and
\begin{eqnarray}\label{epsilon}
\varepsilon
& := &
\min
\left\{1/2,
\varepsilon_0/2,
\min_{1\leq l\leq L}\varepsilon_{\zeta_l}
\right\}
\,.
\end{eqnarray}
For all
\begin{eqnarray}\label{RK1}
R & \geq & 
4(2+\|\eta\|_{\infty})\|z\|_K
\left(1+\max_{1\leq l\leq L}
\|g_{\zeta_l}\|_{V_{\zeta_l}}\right)
\,,
\end{eqnarray}
one has by~(\ref{estimrtildeNzeta}),
(\ref{MNzetaK}) and~(\ref{MNK}),
\begin{eqnarray}\nonumber
M_{N,K}
& = &
6\max_{1\leq l\leq L}
\|\widetilde{r}_{N,\zeta_l}\|_{V_{\zeta_l}\times K}
\\\label{estimMNK}
& \leq &
48\|f\|_R\|z\|_K
\left(
1+\max_{1\leq l\leq L}\|g_{\zeta_l}\|_{V_{\zeta_l}}
\right)
\left(
\frac{2+\|\eta\|_{\infty}}{R}
\right)^N
\;<\;
+\infty
\,.
\end{eqnarray}

Now we want to prove by induction on $p\geq0$ that
\begin{eqnarray}\label{estimDeltarN}
\sup_{z\in K}
\left|
\Delta_{p,(\eta_p,\ldots,\eta_1)}
(\zeta\mapsto r_N(\zeta,z))(\eta_{p+1})
\right|
& \leq &
\frac{M_{N,K}}{\varepsilon^p}
\,.
\end{eqnarray}

If $p=0$, let be $\zeta_l$ such that
$\eta_1\in D(\zeta_l,\varepsilon_{\zeta_l})$.
One has
\begin{eqnarray*}
\sup_{z\in K}
\left|
\Delta_{0}
(\zeta\mapsto r_N(\zeta,z))
(\eta_1)
\right|
& = &
\sup_{z\in K}|r_N(\eta_1,z)|
\;=\;
\sup_{z\in K}
\left|\widetilde{r}_{N,\zeta_l}(\eta_1,z)\right|
\\
& \leq &
\|\widetilde{r}_{N,\zeta_l}\|_{V_{\zeta_l}\times K}
\;\leq\;
M_{N,\zeta_l,K}
\;\leq\;
M_{N,K}
\,.
\end{eqnarray*}

Now if it is true for $p\geq0$, let be
$\eta_1,\ldots,\eta_{p+1},\eta_{p+2}$.
If there exist
$1\leq k<l\leq p+2$ such that
$|\eta_k-\eta_l|\geq\varepsilon_0$, then by
permuting if necessary the $\eta_j,\,1\leq j\leq p+2$,
(that does not change $\Delta_{p+1}$ by Lemma~\ref{Deltapermut}),
we can assume that 
$|\eta_{p+2}-\eta_{p+1}|\geq\varepsilon_0$. Then
\begin{eqnarray*}
\sup_{z\in K}
\left|
\Delta_{p+1,(\eta_{p+1},\ldots,\eta_1)}
(\zeta\mapsto r_N(\zeta,z))(\eta_{p+2})
\right|
& \leq &
\;\;\;\;\;\;\;\;\;\;\;\;\;\;\;\;\;\;\;\;\;\;\;\;\;\;\;\;\;\;\;\;\;\;\;
\;\;\;\;\;\;\;\;\;\;\;\;\;\;\;\;\;\;\;\;\;
\end{eqnarray*}
\begin{eqnarray*}
& \leq &
\frac{
\sup_{z\in K}
\left|
\Delta_{p,(\eta_{p},\ldots,\eta_1)}
(r_N(\zeta,z))(\eta_{p+2})
\right|
+
\sup_{z\in K}
\left|
\Delta_{p,(\eta_{p},\ldots,\eta_1)}
(r_N(\zeta,z))(\eta_{p+1})
\right|
}
{|\eta_{p+2}-\eta_{p+1}|}
\\
& \leq &
\frac{M_{N,K}/\varepsilon^p+M_{N,K}/\varepsilon^p}
{\varepsilon_0}
\;=\;
\frac{M_{N,K}/\varepsilon^p}{\varepsilon_0/2}
\;\leq\;
\frac{M_{N,K}}{\varepsilon^{p+1}}
\,.
\end{eqnarray*}
Otherwise, for all
$1\leq l<k\leq p+2$,
$|\eta_k-\eta_l|<\varepsilon_0$.
In particular,
$\eta_2,\ldots,\eta_{p+1}
\in D(\eta_1,\varepsilon_0)$.
On the other hand, $\exists\,\zeta_l$,
$D(\eta_1,\varepsilon_0)\subset
D(\zeta_l,\varepsilon_{\zeta_l})$.
Then $\forall\,z\in K$, one has by~(\ref{rNtildeinterpol}),
Lemmas~\ref{Deltanal} and~\ref{combi}
\begin{eqnarray*}
\left|
\Delta_{p+1,(\eta_{p+1},\ldots,\eta_1)}
(r_N(\zeta,z))(\eta_{p+2})
\right|
& = &
\;\;\;\;\;\;\;\;\;\;\;\;\;\;\;\;\;\;\;\;\;\;\;\;\;\;\;\;\;\;\;\;
\;\;\;\;\;\;\;\;\;\;\;\;\;\;\;\;\;\;\;\;\;\;\;\;\;\;\;\;\;\;\;\;
\end{eqnarray*}
\begin{eqnarray*}
& = &
\left|
\Delta_{p+1,(\eta_{p+1},\ldots,\eta_1)}
(\widetilde{r}_{N,\zeta_l}(\zeta,z))(\eta_{p+2})
\right|
\\
& \leq &
\sum_{n\geq p+1}
|a_n(N,\zeta_l,z)|
\sum_{s_1=0}^{n-p-1}
|\eta_1-\zeta_l|^{n-p-1-s_1}
\cdots
\sum_{s_{p+1}=0}^{s_{p}}
|\eta_{p+1}-\zeta_l|^{s_p-s_{p+1}}
|\eta_{p+2}-\zeta_l|^{s_{p+1}}
\\
& \leq &
\sum_{n\geq p+1}
|a_n(N,\zeta_l,z)|
\varepsilon_{\zeta_l}^{n-p-1}
\sum_{s_1=0}^{n-p-1}
\cdots
\sum_{s_{p+1}=0}^{s_{p+1}}
1
\\
& = &
\sum_{n\geq p+1}
|a_n(N,\zeta_l,z)|
\varepsilon_{\zeta_l}^{n-p-1}
\frac{n!}{(p+1)!\,(n-p-1)!}
\\
& = &
\frac{1}{(p+1)!}
\frac{\partial^{p+1}}{\partial t^{p+1}}|_{t=\varepsilon_{\zeta_l}}
\left[
\sum_{n\geq0}
|a_n(N,\zeta_l,z)|\,t^n
\right]
\;=\;
\frac{1}{2i\pi}
\int_{|t|=2\varepsilon_{\zeta_l}}
\frac{\sum_{n\geq0}|a_n(N,\zeta_l,z)|\,t^n\,dt}
{(t-\varepsilon_{\zeta_l})^{p+2}}
\\
& \leq &
2\varepsilon_{\zeta_l}
\sup_{|t|=2\varepsilon_{\zeta_l}}
\frac{\sum_{n\geq0}|a_n(N,\zeta_l,z)|\,|t|^n\,dt}
{(|t|-\varepsilon_{\zeta_l})^{p+2}}
\;=\;
\frac{2}{\varepsilon_{\zeta_l}^{p+1}}
\sum_{n\geq0}
|a_n(N,\zeta_l,z)|
(2\varepsilon_{\zeta_l})^n
.
\end{eqnarray*}
It follows by~(\ref{MNzetaK}), (\ref{MNK})
and~(\ref{epsilon}) that
\begin{eqnarray*}
\sup_{z\in K}
\left|
\Delta_{p+1,(\eta_{p+1},\ldots,\eta_1)}
(r_N(\zeta,z))(\eta_{p+2})
\right|
& \leq &
\frac{2}{\varepsilon_{\zeta_l}^{p+1}}
\sum_{n\geq0}
\sup_{z\in K}
|a_n(N,\zeta_l,z)|
(2\varepsilon_{\zeta_l})^n
\\
& \leq &
\frac{2M_{N,\zeta_l,K}}{\varepsilon_{\zeta_l}^{p+1}}
\;\leq\;
\frac{M_{N,K}}{\varepsilon^{p+1}}
\,,
\end{eqnarray*}
and this proves~(\ref{estimDeltarN}).

Then one has by~(\ref{epsilon})
and~(\ref{estimMNK}), for all
$N\geq1$,
\begin{eqnarray*}
\sup_{z\in K}
|R_N(f;z)(z)|
& \leq &
\sup_{z\in K}
\sum_{p=0}^{N-1}
|z_2|^{N-1-p}
\prod_{j=1}^p
|z_1-\eta_jz_2|
\left|
\Delta_{p,(\eta_p,\ldots,\eta_1)}
(\zeta\mapsto r_N(\zeta,z))(\eta_{p+1})
\right|
\\
& \leq &
\sum_{p=0}^{N-1}
\|z\|_K^{N-1-p}
\prod_{j=1}^p
\left(\|z\|_K
\sqrt{1+|\eta_j|^2}
\right)
\sup_{z\in K}
\left|
\Delta_p
(r_N(\zeta,z))(\eta_{p+1})
\right|
\\
& \leq &
\|z\|_K^{N-1}
\sum_{p=0}^{N-1}
(1+\|\eta\|_{\infty})^p
\frac{M_{N,K}}{\varepsilon^p}
\;=\;
M_{N,K}
\|z\|_K^{N-1}
\frac{((1+\|\eta\|_{\infty})/\varepsilon)^N-1}
{(1+\|\eta\|_{\infty})/\varepsilon-1}
\\
& \leq &
M_{N,K}
\|z\|_K^{N-1}
((1+\|\eta\|_{\infty})/\varepsilon)^N
\\
& \leq &
48\|f\|_R
\left(
1+\max_{1\leq l\leq L}
\|g_{\zeta_l}\|_{V_{\zeta_l}}
\right)
\left(
\frac{\|z\|_K(2+\|\eta\|_{\infty})
(1+\|\eta\|_{\infty})/\varepsilon}
{R}
\right)^N
.
\end{eqnarray*}
It follows by~(\ref{RK1}) that if we fix
\begin{eqnarray*}
R\;=\;
R_K
& := &
4\|z\|_K
(2+\|\eta\|_{\infty})^2
\left(1+\max_{1\leq l\leq L}
\|g_{\zeta_l}\|_{V_{\zeta_l}}\right)
/\varepsilon
\,,
\end{eqnarray*}
we get
\begin{eqnarray*}
\sup_{z\in K}
|R_N(f;z)(z)|
\;\leq\;
48\|f\|_{R_K}
\left(
1+\max_{1\leq l\leq L}
\|g_{\zeta_l}\|_{V_{\zeta_l}}
\right)
\frac{1}{4^N}
\;\xrightarrow[N\rightarrow\infty]{}\;
0\,,
\end{eqnarray*}
and the proof is achieved.

\end{proof}

\subsection{Proof of Theorems \ref{equivnodense} and \ref{superbe}}\label{endproofs}

In this part we do not assume any more that
$\{\eta_j\}_{j\geq1}$ is bounded, else it is not dense in
$\C$. Then
$\exists\,\eta_{\infty}\in\overline{\C},\,
\exists\,V\in\mathcal{V}(\eta_{\infty})$,
$\forall\,j\geq1,\,\eta_j\notin V$. We can assume that
$\eta_{\infty}\neq\infty$ because otherwise it means
that $\{\eta_j\}_{j\geq1}$ is bounded and the results we want to prove
have already been proved in Section~\ref{proofequivbounded}
and Subsection~\ref{proofsuperbounded}. Then
\begin{eqnarray}\label{nondense}
\exists\,\varepsilon_{\infty}>0,\;
\forall\,j\geq1,\;|\eta_j-\eta_{\infty}|\geq\varepsilon_{\infty}
\,.
\end{eqnarray}

\subsubsection{Proof of Theorem \ref{equivnodense}}

Let consider
\begin{eqnarray}
U_{\eta_{\infty}} & := &
\frac{1}{\sqrt{1+|\eta_{\infty}^2|}}
\bordermatrix{ & & \cr
& \overline{\eta_{\infty}}  & 1  \cr
& 1  & -\eta_{\infty}  \cr}
\,.
\end{eqnarray}
$U_{\eta_{\infty}}\in\mathcal{U}(2,\C)$, ie
\begin{eqnarray}
U_{\eta_{\infty}}^{\star}
\;=\;
U_{\eta_{\infty}}^{-1}
& = &
\frac{1}{\sqrt{1+|\eta_{\infty}|^2}}
\bordermatrix{ & & \cr
& \eta_{\infty}  & 1  \cr
& 1  & -\overline{\eta_{\infty}}  \cr}
\end{eqnarray}
and
\begin{eqnarray}
\begin{cases}
U_{\eta_{\infty}}(\{z_1-\eta_{\infty}z_2=0\})
\;=\;\{z_2=0\}\,,\\
U_{\eta_{\infty}}^{\star}(\{z_2=0\})
\;=\;\{z_1-\eta_{\infty}z_2=0\}\,.
\end{cases}
\end{eqnarray}

We remind the definition of
$\theta_j$ associate to $\eta_{\infty}$
(Introduction, (\ref{deftheta})),
\begin{eqnarray*}
\forall\,j\geq1,\;
\theta_j & = &
\frac{1+\overline{\eta_{\infty}}\eta_j}
{\eta_j-\eta_{\infty}}
\,,
\end{eqnarray*}
and we begin with this preliminar result.

\begin{lemma}\label{lemmextensio}

The following assertions are equivalent:

\begin{enumerate}

\item

the formula $R_N(f;\eta)$ converges for every
function $f\in\mathcal{O}\left(\C^2\right)$

\item
 
$\exists\,\eta_{\infty}\notin\overline{\{\eta_j\}_{j\geq1}}\cup\{\infty\}$
such that the formula
$R_N(f;\theta)$ (constructed with the associate $\theta_j$)
converges for every
$f\in\mathcal{O}\left(\C^2\right)$

\item

$\forall\,\eta_{\infty}\notin
\overline{\{\eta_j\}_{j\geq1}}\cup\{\infty\}$,
the formula
$R_N(f;\theta)$ converges for every
$f\in\mathcal{O}\left(\C^2\right)$.

\end{enumerate}

\end{lemma}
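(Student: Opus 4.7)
The plan is to use the unitary change of variables $w=U_{\eta_\infty}\zeta$ (and correspondingly $z\mapsto U_{\eta_\infty}z$) to convert the family $\{\zeta_1-\eta_j\zeta_2\}_{j\geq1}$ into $\{w_1-\theta_jw_2\}_{j\geq1}$, and then deduce the equivalences from the integral representation in Corollary~\ref{equivcv0}. The implication (3)$\Rightarrow$(2) is immediate, so the core task is: for each fixed admissible $\eta_\infty$, show that $R_N(f;\eta)$ converges on compacta for every $f\in\mathcal{O}(\C^2)$ if and only if $R_N(\tilde{f};\theta)$ converges on compacta for every $\tilde{f}\in\mathcal{O}(\C^2)$, where $\tilde{f}(w):=f(U_{\eta_\infty}^\star w)$.

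The key algebraic identity, obtained by inserting $\zeta=U_{\eta_\infty}^\star w$ componentwise, is
\begin{eqnarray*}
\zeta_1-\eta_j\zeta_2 & = &
\frac{\eta_\infty-\eta_j}{\sqrt{1+|\eta_\infty|^2}}
\bigl(w_1-\theta_jw_2\bigr),
\end{eqnarray*}
together with the analogous identity for $z$ and $U_{\eta_\infty}z$. Taking products over $j=1,\dots,N$, the constant $\prod_{j=1}^N(\eta_\infty-\eta_j)/\sqrt{1+|\eta_\infty|^2}$ factors out of both $\prod_{j=1}^N(z_1-\eta_jz_2)$ and $\prod_{j=1}^N(\zeta_1-\eta_j\zeta_2)$ in the integrand of Corollary~\ref{equivcv0}, and cancels. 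Since $|\det U_{\eta_\infty}|=1$, a direct expansion shows that $U_{\eta_\infty}$ preserves the sphere $\mathbb{S}_2$, the hermitian pairing $\langle\overline{\zeta},z\rangle=\langle\overline{w},U_{\eta_\infty}z\rangle$, and the $3$-form $\omega'(\overline{\zeta})\wedge\omega(\zeta)=\omega'(\overline{w})\wedge\omega(w)$. Finally $\{\prod|\zeta_1-\eta_j\zeta_2|>\varepsilon\}$ transforms to $\{\prod|w_1-\theta_jw_2|>\varepsilon'\}$ with $\varepsilon'=\varepsilon/|\prod(\eta_\infty-\eta_j)/\sqrt{1+|\eta_\infty|^2}|$, which also tends to $0$, so both principal-value limits may be taken simultaneously.

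Applying Corollary~\ref{equivcv0} in the $\zeta$-variables and then comparing with the same corollary applied in the $w$-variables to $\tilde{f}$ and $\{\theta_j\}$ yields, for $z\in\mathbb{B}_2$,
\begin{eqnarray*}
R_N(f;\eta)(z)-\sum_{k+l\geq N}a_{k,l}(f)z_1^kz_2^l
& = &
R_N(\tilde{f};\theta)(U_{\eta_\infty}z)-\sum_{k+l\geq N}a_{k,l}(\tilde{f})(U_{\eta_\infty}z)_1^k(U_{\eta_\infty}z)_2^l.
\end{eqnarray*}
Both sides are entire in $z$, so the identity extends to all of $\C^2$. The Taylor tails tend to $0$ uniformly on any compact as $N\to\infty$, and $z\mapsto U_{\eta_\infty}z$ is norm-preserving and therefore maps compact subsets bijectively onto compact subsets; consequently convergence on compacta of $R_N(f;\eta)$ is equivalent to convergence on compacta of $R_N(\tilde{f};\theta)$. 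Since $f\mapsto\tilde{f}$ is a bijection of $\mathcal{O}(\C^2)$, the two universal statements are equivalent, giving (1)$\Leftrightarrow$(3); applying the same bijection to an $\eta_\infty$ witnessing (2) yields (2)$\Rightarrow$(1).

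The main obstacle will be the careful bookkeeping of the scalar prefactors produced by the change of variables, together with the simultaneous verification that the residue set $\{\prod_{j=1}^N|\zeta_1-\eta_j\zeta_2|>\varepsilon\}$ transforms to the analogous set in the $w$-coordinates with a parameter still tending to $0$, so that Corollary~\ref{equivcv0} can indeed be invoked symmetrically on both sides of the change of variables.
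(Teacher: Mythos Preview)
Your proposal is correct and follows essentially the same approach as the paper: both arguments apply Corollary~\ref{equivcv0}, perform the unitary change of variables $\zeta\mapsto U_{\eta_\infty}^\star\zeta$, verify that the linear forms $\zeta_1-\eta_j\zeta_2$ transform into $(\eta_\infty-\eta_j)/\sqrt{1+|\eta_\infty|^2}\,(w_1-\theta_jw_2)$ so that the product of constants cancels between numerator and denominator, check that $\omega'(\overline{\zeta})\wedge\omega(\zeta)$, the hermitian pairing, the sphere, and the principal-value domains are preserved, and then conclude via the bijection $f\mapsto f\circ U_{\eta_\infty}^\star$ of $\mathcal{O}(\C^2)$. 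One small remark: the invariance of $\omega'(\overline{\zeta})\wedge\omega(\zeta)$ holds not merely because $|\det U_{\eta_\infty}|=1$, but because each factor picks up the same sign $\det U_{\eta_\infty}^\star=-1$ (as the paper computes explicitly), so the product is unchanged; your conclusion is right even if the stated justification is slightly imprecise.
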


\begin{proof}

We begin with reminding this equality (Introduction,
Corollary~\ref{equivcv0}):
$\forall\,f\in\mathcal{O}(\C^2)$,
$\forall\,N\geq1$ and $\forall\,z\in\mathbb{B}_2$,
\begin{eqnarray}\label{prelimrest}
& &
\prod_{j=1}^N(z_1-\eta_jz_2)
\lim_{\varepsilon\rightarrow0}
\frac{1}{(2i\pi)^2}
\int_{\Omega_{\varepsilon}}
\frac{f(\zeta)\,\omega'(\overline{\zeta})\wedge\omega(\zeta)}
{\prod_{j=1}^N(\zeta_1-\eta_j\zeta_2)
\,
(1-<\overline{\zeta},z>)^2}
\;=\;
\end{eqnarray}
\begin{eqnarray*}
\;\;\;\;\;\;\;\;\;\;\;\;\;\;\;\;\;\;\;\;\;\;\;\;\;\;\;\;\;\;\;\;\;\;\;\;\;\;
\;\;\;\;\;\;\;\;\;\;\;\;\;\;\;\;\;\;\;\;\;
& = &
R_N(f;\eta)(z)
-\sum_{k+l\geq N}
a_{k,l}z_1^kz_2^l
\,,
\end{eqnarray*}
with
\begin{eqnarray}\label{defOmega}
\Omega_{\varepsilon}
& = &
\left\{
\zeta\in\mathbb{S}_2,\,
\left|\prod_{j=1}^N(\zeta_1-\eta_j\zeta_2)\right|>\varepsilon
\right\}
\,.
\end{eqnarray}
On the other hand,
\begin{eqnarray*}
\prod_{j=1}^N(z_1-\eta_jz_2)
\lim_{\varepsilon\rightarrow0}
\frac{1}{(2i\pi)^2}
\int_{\Omega_{\varepsilon}}
\frac{f(\zeta)\,\omega'(\overline{\zeta})\wedge\omega(\zeta)}
{\prod_{j=1}^N(\zeta_1-\eta_j\zeta_2)
\,
(1-<\overline{\zeta},z>)^2}
& = &
\;\;\;\;\;\;\;\;\;\;\;\;\;\;\;\;\;\;\;\;\;\;\;
\end{eqnarray*}
\begin{eqnarray*}
& = &
\prod_{j=1}^N(z_1-\eta_jz_2)
\times
\\
& &
\times
\lim_{\varepsilon\rightarrow0}
\frac{1}{(2i\pi)^2}
\int_{\Omega_{\varepsilon}}
\frac{f(U_{\eta_{\infty}}^{\star}U_{\eta_{\infty}}\zeta)
\,
\omega'(\overline{U_{\eta_{\infty}}^{\star}U_{\eta_{\infty}}\zeta})
\wedge\omega(U_{\eta_{\infty}}^{\star}U_{\eta_{\infty}}\zeta)}
{\prod_{j=1}^N((U_{\eta_{\infty}}^{\star}U_{\eta_{\infty}}\zeta)_1
-\eta_j
(U_{\eta_{\infty}}^{\star}U_{\eta_{\infty}}\zeta)_2)
\,
(1-<\overline{U_{\eta_{\infty}}^{\star}U_{\eta_{\infty}}\zeta},z>)^2}
\\
& = &
\prod_{j=1}^N(z_1-\eta_jz_2)
\times
\\
& &
\times
\lim_{\varepsilon\rightarrow0}
\frac{1}{(2i\pi)^2}
\int_{U_{\eta_{\infty}}(\Omega_{\varepsilon})}
\frac{f(U_{\eta_{\infty}}^{\star}\zeta)
\,
\omega'(\overline{U_{\eta_{\infty}}^{\star}\zeta})
\wedge\omega(U_{\eta_{\infty}}^{\star}\zeta)}
{\prod_{j=1}^N((U_{\eta_{\infty}}^{\star}\zeta)_1
-\eta_j
(U_{\eta_{\infty}}^{\star}\zeta)_2)
\,
(1-<\overline{U_{\eta_{\infty}}^{\star}\zeta},z>)^2}
\,.
\end{eqnarray*}
Since
\begin{eqnarray*}
\begin{cases}
(U_{\eta_{\infty}}^{\star}\zeta)_1
=
\dfrac{\eta_{\infty}\zeta_1+\zeta_2}{\sqrt{1+|\eta_{\infty}|^2}}
\\
\\
(U_{\eta_{\infty}}^{\star}\zeta)_2
=
\dfrac{\zeta_1-\overline{\eta_{\infty}}\zeta_2}{\sqrt{1+|\eta_{\infty}|^2}}
\end{cases}
\,,
\end{eqnarray*}
then
\begin{eqnarray*}
\omega'\left(\overline{U_{\eta_{\infty}}^{\star}\zeta}\right)
& = &
\frac{1}{1+|\eta_{\infty}|^2}
\left[
(\overline{\eta_{\infty}}\overline{\zeta_1}+\overline{\zeta_2})
(d\overline{\zeta_1}-\eta_{\infty}d\overline{\zeta_2})
-
(\overline{\zeta_1}-\eta_{\infty}\overline{\zeta_2})
(\overline{\eta_{\infty}}d\overline{\zeta_1}+d\overline{\zeta_2})
\right]
\\
& = &
\frac{1}{1+|\eta_{\infty}|^2}
\left[
(1+|\eta_{\infty}|^2)\overline{\zeta_2}d\overline{\zeta_1}
+
(-1-|\eta_{\infty}|^2)\overline{\zeta_1}d\overline{\zeta_2}
\right]
\\
& = &
-\omega'\left(\overline{\zeta}\right)
\,,
\end{eqnarray*}
and
\begin{eqnarray*}
\omega\left(U_{\eta_{\infty}}^{\star}\zeta\right)
\;=\;
\frac{1}{2}\,
\overline{
d\left(
\omega'\left(\overline{U_{\eta_{\infty}}^{\star}\zeta}\right)
\right)
}
\;=\;
-\frac{1}{2}\,
\overline{d\omega'\left(\overline{\zeta}\right)}
\;=\;
-\overline{\omega\left(\overline{\zeta}\right)}
\;=\;
-\omega(\zeta)
\,.
\end{eqnarray*}
One has also, $\forall\,j=1,\ldots,N$,
\begin{eqnarray*}
(U_{\eta_{\infty}}^{\star}\zeta)_1
-\eta_j
(U_{\eta_{\infty}}^{\star}\zeta)_2
& = &
\frac{1}{\sqrt{1+|\eta_{\infty}|^2}}
\left(
(\eta_{\infty}-\eta_j)\zeta_1
+
(1+\overline{\eta_{\infty}}\eta_j)\zeta_2
\right)
\\
& = &
\frac{\eta_{\infty}-\eta_j}{\sqrt{1+|\eta_{\infty}|^2}}
\left(
\zeta_1-\theta_j\zeta_2
\right)
\,.
\end{eqnarray*}
Moreover, since
$U_{\eta_{\infty}}^{\star}\in\mathcal{U}(2,\C)$,
\begin{eqnarray*}
<\overline{U_{\eta_{\infty}}^{\star}\zeta},z>
& = &
<\overline{U_{\eta_{\infty}}^{\star}}\overline{\zeta}\,,
\,U_{\eta_{\infty}}^{\star}U_{\eta_{\infty}}z>
\\
& = &
<\overline{\zeta}\,,\,U_{\eta_{\infty}}z>
\end{eqnarray*}
and
$\forall\,j=1,\ldots,N$,
\begin{eqnarray*}
z_1-\eta_jz_2 
& = &
(U_{\eta_{\infty}}^{\star}U_{\eta_{\infty}}z)_1
-
(U_{\eta_{\infty}}^{\star}U_{\eta_{\infty}}z)_2
\\
& = &
\frac{\eta_{\infty}-\eta_j}{\sqrt{1+|\eta_{\infty}|^2}}
\left(
(U_{\eta_{\infty}}z)_1-\theta_j(U_{\eta_{\infty}}z)_2
\right)
\,.
\end{eqnarray*}
Finally, since
$U_{\eta_{\infty}}^{\star}\left(\mathbb{S}_2\right)
=\mathbb{S}_2$, one has
\begin{eqnarray*}
U_{\eta_{\infty}}(\Omega_{\varepsilon})
& = &
\left(
U_{\eta_{\infty}}^{\star}
\right)^{-1}
(\Omega_{\varepsilon})
\\
& = &
\left\{
\zeta\in\mathbb{S}_2,\;
\left|
\prod_{j=1}^N
\left(
(U_{\eta_{\infty}}^{\star}\zeta)_1-\eta_j(U_{\eta_{\infty}}^{\star}\zeta)_2
\right)
\right|
>\varepsilon
\right\}
\\
& = &
\left\{
\left(
\prod_{j=1}^N
\frac{|\eta_{\infty}-\eta_j|}{\sqrt{1+|\eta_{\infty}|^2}}
\right)
\left|
\prod_{j=1}^N
\left(
\zeta_1-\theta_j\zeta_2
\right)
\right|
>\varepsilon
\right\}
\\
& = &
\left\{
\zeta\in\mathbb{S}_2,\;
\left|
\prod_{j=1}^N
\left(
\zeta_1-\theta_j\zeta_2
\right)
\right|
>\varepsilon/C_N
\right\}
\\
& = &
\Omega'_{\varepsilon/C_N}
\,.
\end{eqnarray*}

It follows from (\ref{prelimrest}) that
\begin{eqnarray*}
R_N(f;\eta)(z)
-\sum_{k+l\geq N}
\frac{1}{k!\,l!}
\frac{\partial^{k+l}f}{\partial z_1^k\partial z_2^l}(0)
z_1^kz_2^l
& = &
\\
& &
\;\;\;\;\;\;\;\;\;\;\;\;\;\;\;\;\;\;\;\;\;\;\;\;\;\;\;\;\;\;\;\;\;\;
\;\;\;\;\;\;\;\;\;\;\;\;\;\;\;\;\;\;\;\;\;
\end{eqnarray*}
\begin{eqnarray*}
& = &
\prod_{j=1}^N(z_1-\eta_jz_2)
\lim_{\varepsilon\rightarrow0}
\frac{1}{(2i\pi)^2}
\int_{\Omega_{\varepsilon}}
\frac{f(\zeta)\,\omega'(\overline{\zeta})\wedge\omega(\zeta)}
{\prod_{j=1}^N(\zeta_1-\eta_j\zeta_2)
\,
(1-<\overline{\zeta},z>)^2}
\\
& = &
C_N
\prod_{j=1}^N
\left(
(U_{\eta_{\infty}}z)_1-\theta_j(U_{\eta_{\infty}}z)_2
\right)
\times\\
& &
\times
\lim_{\varepsilon\rightarrow0}
\frac{1}{(2i\pi)^2}
\int_{\Omega'_{\varepsilon/C_N}}
\frac{f(U_{\eta_{\infty}}^{\star}\zeta)
(-\omega'(\zeta))\wedge(-\omega(\zeta))}
{C_N
\prod_{j=1}^N(\zeta_1-\theta_j\zeta_2)
(1-<\overline{\zeta},U_{\eta_{\infty}}z>)^2}
\\
\\
& = &
\prod_{j=1}^N
\left(
(U_{\eta_{\infty}}z)_1-\theta_j(U_{\eta_{\infty}}z)_2
\right)
\lim_{\varepsilon\rightarrow0}
\frac{1}{(2i\pi)^2}
\int_{\Omega'_{\varepsilon}}
\frac{\left(f\circ U_{\eta_{\infty}}^{\star}\right)(\zeta)\,
\omega'(\zeta)\wedge\omega(\zeta)}
{\prod_{j=1}^N(\zeta_1-\theta_j\zeta_2)
(1-<\overline{\zeta},U_{\eta_{\infty}}z>)^2}
\\
& = &
R_N\left(
f\circ U_{\eta_{\infty}}^{\star};
\theta
\right)
\left(
U_{\eta_{\infty}}z
\right)
-\sum_{k+l\geq N}
\frac{1}{k!\,l!}
\frac{\partial^{k+l}\left(f\circ U_{\eta_{\infty}}^{\star}\right)}
{\partial z_1^k\partial z_2^l}(0)
\left(U_{\eta_{\infty}}z\right)_1^k
\left(U_{\eta_{\infty}}z\right)_2^l
\,.
\end{eqnarray*}

Since
$f,\,f\circ U_{\eta_{\infty}}^{\star}\in\mathcal{O}(\C^2)$
and for any compact subset $K\subset\C^2$,
$U_{\eta_{\infty}}(K)$ is still compact, one has
\begin{eqnarray*}
\sup_{z\in K}
\left|
\sum_{k+l\geq N}
\frac{1}{k!\,l!}
\frac{\partial^{k+l}f}{\partial z_1^k\partial z_2^l}(0)
z_1^kz_2^l
\right|
\xrightarrow[N\rightarrow\infty]{}0\,,
\end{eqnarray*}
and
\begin{eqnarray*}
\sup_{z\in K}
\left|
\sum_{k+l\geq N}
\frac{1}{k!\,l!}
\frac{\partial^{k+l}\left(f\circ U_{\eta_{\infty}}^{\star}\right)}
{\partial z_1^k\partial z_2^l}(0)
\left(U_{\eta_{\infty}}z\right)_1^k
\left(U_{\eta_{\infty}}z\right)_2^l
\right|
\xrightarrow[N\rightarrow\infty]{}0
\,.
\end{eqnarray*}

Therefore
$R_N(f;\eta)$ converges for every function $f\in\mathcal{O}(\C^2)$
if and only if
$R_N\left(
f\circ U_{\eta_{\infty}}^{\star};
\theta
\right)
\left(
U_{\eta_{\infty}}(\cdot)
\right)$ converges for every
$f\in\mathcal{O}(\C^2)$,
then if and only if
$R_N\left(
f;\theta
\right)$
converges for every function
$f\in\mathcal{O}(\C^2)$.
Moreover, this equivalence is true for all
$\eta_{\infty}\notin\overline{\{\eta_j\}_{j\geq1}}\cup\{\infty\}$.
This yields to
(2)$\Rightarrow$(1)$\Rightarrow$(3)
(and (3)$\Rightarrow$(2) since
$\C\setminus\overline{\{\eta_j\}_{j\geq1}}\neq\emptyset$).

\end{proof}

Now we see that if
$|\eta_j|\leq2|\eta_{\infty}|$,
then by~(\ref{nondense})
\begin{eqnarray*}
|\theta_j|
\;=\;
\frac{|1+\overline{\eta_{\infty}}\eta_j|}{|\eta_j-\eta_{\infty}|}
& \leq &
\frac{1+2|\eta_{\infty}|^2}{\varepsilon_{\infty}}
\;<\;
+\infty
\,.
\end{eqnarray*}
Otherwise
$|\eta_j|>2|\eta_{\infty}|$
then
\begin{eqnarray*}
|\theta_j|
\;=\;
\frac{|\overline{\eta_{\infty}}+1/\eta_j|}{|1-\eta_{\infty}/\eta_j|}
& \leq &
\frac{|\eta_{\infty}|+1/(2|\eta_{\infty}|)}{1-1/2}
\;=\;
2\left(
|\eta_{\infty}|+1/(2|\eta_{\infty}|)
\right)
\;<\;
+\infty\,,
\end{eqnarray*}
as long as $\eta_{\infty}\neq0$;
if it is not the case, one has
$|\eta_j|\geq\varepsilon_{\infty}$,
$\forall\,j\geq1$, then
\begin{eqnarray*}
|\theta_j|
\;=\;
1/|\eta_j|
& \leq &
1/\varepsilon_{\infty}
\;<\;
+\infty
\,.
\end{eqnarray*}
It follows that
\begin{eqnarray}\label{thetabounded}
\|\theta\|_{\infty}
\;=\;
\sup_{j\geq1}|\theta_j|
& < &
+\infty
\,.
\end{eqnarray}
On the other hand, if we admit that for
$\eta_{\infty}=\infty$
(ie
$\|\eta\|_{\infty}<+\infty$) one has
\begin{eqnarray*}
\theta_j & := &
\lim_{x\rightarrow\infty,x>0}
\frac{1+\overline{ix}\eta_j}{\eta_j-ix}
\,,
\end{eqnarray*}
then $\theta_j=\eta_j$.
Moreover, if we more generally choose
\begin{eqnarray*}
\theta_j & := &
\lim_{x\rightarrow\infty,x>0}
\frac{1+\overline{e^{i\varphi}x}\eta_j}{\eta_j-e^{i\varphi}x}
\,,
\end{eqnarray*}
then
$\theta_j=-e^{-2i\varphi}\eta_j$
and one still has
\begin{eqnarray}\nonumber
\left|
\Delta_{p,(\theta_p,\ldots,\theta_1)}
\left[
\left(
\frac{\overline{\zeta}}{1+|\zeta|^2}
\right)^q
\right](\theta_{p+1})
\right|
& = &
\left|
\Delta_{p,(-e^{-2i\varphi}\eta_p,\ldots,-e^{-2i\varphi}\eta_1)}
\left[
\left(
\frac{\overline{\zeta}}{1+|\zeta|^2}
\right)^q
\right]
\left(-e^{-2i\varphi}\eta_{p+1}\right)
\right|
\\\label{convetainfini}
& = &
\left|
\Delta_{p,(\eta_p,\ldots,\eta_1)}
\left[
\left(
\frac{\overline{\zeta}}{1+|\zeta|^2}
\right)^q
\right](\eta_{p+1})
\right|
\,.
\end{eqnarray}
This allows us to give the proof of Theorem~\ref{equivnodense}.

\begin{proof}

Let be 
$\eta_{\infty}\notin
\overline{\{\eta_j\}_{j\geq1}}\cup\{\infty\}$
and the associate
$\{\theta_j\}_{j\geq1}$. We know by
Lemma~\ref{lemmextensio} that
$R_N(f;\eta)$ converges for every
$f\in\mathcal{O}\left(\C^2\right)$
if and only if so does
$R_N(f;\theta)$,
$\forall\,f\in\mathcal{O}\left(\C^2\right)$.
On the other hand,
$\{\theta_j\}_{j\geq1}$ being bounded by~(\ref{thetabounded}),
it follows by Theorem~\ref{equivbounded}
that it is true if and only if
$\exists\,R_{\theta}$,
$\forall\,p,q\geq0$,
\begin{eqnarray*}
\left|
\Delta_{p,(\theta_p,\ldots,\theta_1)}
\left[
\left(
\frac{\overline{\zeta}}{1+|\zeta|^2}
\right)^q
\right](\theta_{p+1})
\right|
& \leq &
R_{\theta}^{p+q}
\,.
\end{eqnarray*}

\end{proof}

\begin{remark}

It follows from~(\ref{convetainfini}) that
Theorem~\ref{equivnodense} can be extended in the
case when $\eta_{\infty}=\infty$, so it is
an extension of Theorem~\ref{equivbounded}.

\end{remark}

\subsubsection{Proof of Theorem \ref{superbe}}

We begin with specifying a point about
a real-analytically interpolated set
$\{\eta_j\}_{j\geq1}$.

\begin{remark}

If $\{\eta_j\}_{j\geq1}$ is not bounded,
then $\zeta_0=\infty$ is a limit point. Let be
$(\eta_{j_k})_{k\geq1}$ a subsequence that converges
to $\infty$, then from Definition~\ref{anal},
$g_{\infty}(\infty)=\lim_{k\rightarrow\infty}g_{\infty}(\eta_{j_k})
=\lim_{k\rightarrow\infty}\overline{\eta_{j_k}}=\infty$.
It follows that the associate function
$g_{\infty}$ is holomorphic from a neighborhood of
$\infty$ to a neighborhood of $\infty$, ie the function
\begin{eqnarray*}
\widetilde{g_{\infty}}\;:V_0
& \rightarrow &
V'_0
\\
\zeta & \mapsto &
\begin{cases}
\dfrac{1}{g_{\infty}(1/\zeta)}
\;\mbox{if}\;\zeta\neq0\,,
\\
0
\;\mbox{if}\;\zeta=0
\end{cases}
\end{eqnarray*}
(with $V_0,\,V'_0\in\mathcal{V}(0)$),
is holomorphic.

\end{remark}

In Definition~\ref{anal}, we do not need to assume that
$\{\eta_j\}_{j\geq1}$
is not dense. The following result specifies that
it cannot be the case.

\begin{lemma}\label{analnodense}

Let $\{\eta_j\}_{j\geq1}\subset\C$ be any subset.
If it is real-analytically interpolated, then
it is not dense.

\end{lemma}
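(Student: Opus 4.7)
The plan is to argue by contradiction, exploiting the fact that the conjugation map $\zeta\mapsto\overline{\zeta}$ is nowhere holomorphic. Suppose to the contrary that $\{\eta_j\}_{j\geq 1}$ is dense in $\mathbb{C}$, so that $\overline{\{\eta_j\}_{j\geq 1}}=\mathbb{C}$.

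Fix any point $\zeta_0\in\mathbb{C}$. By Definition~\ref{anal} applied to $\zeta_0\in\overline{\{\eta_j\}_{j\geq 1}}$, there exist an open neighborhood $V$ of $\zeta_0$ and a function $g\in\mathcal{O}(V)$ such that $g(\eta_j)=\overline{\eta_j}$ for every $\eta_j\in V$. The density assumption implies that the set $V\cap\{\eta_j\}_{j\geq 1}$ is dense in $V$.

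Consider the continuous function $h:V\to\mathbb{C}$ defined by $h(\zeta)=g(\zeta)-\overline{\zeta}$. By construction $h$ vanishes on the dense subset $V\cap\{\eta_j\}_{j\geq 1}$, and since $h$ is continuous it must vanish identically on $V$. Hence $\overline{\zeta}=g(\zeta)$ for every $\zeta\in V$, which would make the antiholomorphic function $\zeta\mapsto\overline{\zeta}$ agree with the holomorphic function $g$ on the open set $V$. This is impossible (for instance, applying $\partial/\partial\overline{\zeta}$ gives $1=0$), which yields the desired contradiction.

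No step of this argument presents a real obstacle; the only thing to be careful about is that the definition must really supply a \emph{holomorphic} interpolant $g$ on an honest open neighborhood of $\zeta_0$, so that the identity $g(\zeta)=\overline{\zeta}$, once established on all of $V$ by continuity and density, genuinely contradicts the Cauchy--Riemann equations.
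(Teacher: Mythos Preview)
Your argument is correct. It is, however, a genuinely different route from the paper's.

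The paper also argues by contradiction but works only at the single point $0$. It observes that, since $g(0)=0$ (by continuity, as $0$ is a limit point of the $\eta_j$), the difference quotient $\overline{\eta_j}/\eta_j=(g(\eta_j)-g(0))/(\eta_j-0)$ must converge to $g'(0)$ along any sequence $\eta_j\to 0$; in particular $|g'(0)|=1$, say $g'(0)=e^{i\theta}$. It then uses density to pick $\eta_{j_p}$ within $1/(2p^2)$ of $w_p=\tfrac{i}{p}e^{-i\theta/2}$, and computes directly that $\overline{\eta_{j_p}}/\eta_{j_p}\to -e^{i\theta}$, a contradiction.

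Your approach is shorter and more conceptual: you use density plus continuity to upgrade the pointwise identity $g(\eta_j)=\overline{\eta_j}$ to $g(\zeta)=\overline{\zeta}$ on all of $V$, then invoke the Cauchy--Riemann equations. The paper's approach avoids ever asserting $g\equiv\overline{\,\cdot\,}$ on an open set and instead exhibits explicitly the direction-dependence of $\overline{\zeta}/\zeta$, which is thematically closer to the paper's broader concern with how $\Delta_p(\overline{\zeta})$ behaves; but as a proof of this lemma your version is cleaner.
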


\begin{proof}

On the contrary, assume that
$\{\eta_j\}_{j\geq1}$ is dense. Then in particular
$0$ is limit point. Let be
the associate $V\in\mathcal{V}(0)$ and
$g\in\mathcal{O}(V)$. 
Then 
$V\subset\overline{\{\eta_j\}_{j\geq1}}$
and for all $\eta_j$ close to $0$ one has
\begin{eqnarray*}
\frac{\overline{\eta_j}-0}{\eta_j-0}
\;=\;
\frac{\overline{\eta_j}}{\eta_j}
& = &
\frac{g(\eta_j)-g(0)}{\eta_j-0}
\;\xrightarrow[\eta_j\rightarrow0]{}\;
\frac{\partial g}{\partial\zeta}(0)
\,.
\end{eqnarray*}
In particular
$\left|
\dfrac{\partial g}{\partial\zeta}(0)
\right|=1$ then one has
$\dfrac{\partial g}{\partial\zeta}(0)
=e^{i\theta}$.
We set
\begin{eqnarray*}
w_p & = &
\frac{1}{p}\,i\,e^{-i\theta/2}
\end{eqnarray*}
with $p$ large enough such that
$w_p\in V$. Let 
$(\eta_{j_p})_{p\geq p_0}$
be a subsequence of
$\{\eta_j\}_{j\geq1}$ such that,
for all $p\geq p_0$,
$w_p\in V$ and
\begin{eqnarray*}
\left|
\eta_{j_p}-w_p
\right|
& \leq &
\frac{1}{2p^2}
\,.
\end{eqnarray*}
Then (since
$(\eta_{j_p})_{p\geq p_0}$ converges to $0$)
\begin{eqnarray*}
e^{i\theta}
\;=\;
\frac{\partial g}{\partial\zeta}(0)
& = &
\lim_{p\rightarrow\infty}
\frac{\overline{\eta_{j_p}}}{\eta_{j_p}}
\\
& = &
\lim_{p\rightarrow\infty}
\frac{\overline{w_p}+O(1/p^2)}{w_p+O(1/p^2)}
\;=\;
\lim_{p\rightarrow\infty}
\frac{-ie^{i\theta/2}+O(1/p)}{ie^{-i\theta/2}+O(1/p)}
\;=\;
-e^{i\theta}
\,,
\end{eqnarray*}
and that is impossible.

\end{proof}

The following result will be usefull for the proof of
Theorem~\ref{superbe}.

\begin{lemma}\label{equivanal}

Assume that
$\overline{\{\eta_j\}_{j\geq1}}$ is
not bounded and
real-analytically interpolated. Then
for all
$\eta_{\infty}\notin\overline{\{\eta_j\}_{j\geq1}}$
($\neq\C$),
the associate bounded subset
$\{\theta_j\}_{j\geq1}$ is real-analytically interpolated.

\end{lemma}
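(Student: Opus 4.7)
The map $T(\eta) := (1 + \overline{\eta_\infty}\eta)/(\eta - \eta_\infty)$ is a M\"obius transformation with $T(\eta_\infty) = \infty$, $T(\infty) = \overline{\eta_\infty}$, and holomorphic inverse $T^{-1}(\theta) = (1 + \theta\eta_\infty)/(\theta - \overline{\eta_\infty})$ off the point $\overline{\eta_\infty}$. Since $\theta_j = T(\eta_j)$, the plan is to push the local holomorphic extension of $\zeta \mapsto \overline\zeta$ from the $\eta$-side through $T$ to the $\theta$-side. Fix $\zeta_0 \in \overline{\{\theta_j\}_{j\geq 1}} \subset \C$ (by the bound established in~(\ref{thetabounded})). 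I will treat separately the two cases $\zeta_0 \neq \overline{\eta_\infty}$ and $\zeta_0 = \overline{\eta_\infty}$, the second case being the delicate one because it is exactly where $T^{-1}$ sends us to $\infty$.

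In the first case, by continuity of $T^{-1}$ on $\C \setminus \{\overline{\eta_\infty}\}$, the point $\eta_0 := T^{-1}(\zeta_0)$ lies in $\overline{\{\eta_j\}_{j\geq 1}} \cap \C$. By the real-analytic interpolation hypothesis on $\{\eta_j\}$, there exist a neighborhood $W$ of $\eta_0$ and $g \in \mathcal{O}(W)$ with $\overline{\eta_j} = g(\eta_j)$ for all $\eta_j \in W$. A direct computation from the definition gives
\begin{eqnarray*}
\overline{\theta_j}
\;=\;
\frac{1 + \eta_\infty \overline{\eta_j}}{\overline{\eta_j} - \overline{\eta_\infty}}\,,
\end{eqnarray*}
so I define
\begin{eqnarray*}
h(\theta)
\;:=\;
\frac{1 + \eta_\infty\, g(T^{-1}(\theta))}{g(T^{-1}(\theta)) - \overline{\eta_\infty}}\,,
\end{eqnarray*}
which is holomorphic on a neighborhood $V$ of $\zeta_0$ provided the denominator does not vanish. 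At $\theta = \zeta_0$, $g(T^{-1}(\zeta_0)) = g(\eta_0) = \overline{\eta_0}$ by continuity (since $g$ interpolates $\overline{\,\cdot\,}$ on the dense subset $\{\eta_j\} \cap W$ of $\overline{\{\eta_j\}} \cap W$, and coincides with it at isolated points by direct inspection). Then $\overline{\eta_0} = \overline{\eta_\infty}$ would force $\eta_0 = \eta_\infty$ and hence $\zeta_0 = T(\eta_\infty) = \infty$, which is excluded. So shrinking $V$ if needed, $h \in \mathcal{O}(V)$, and by construction $h(\theta_j) = \overline{\theta_j}$ for every $\theta_j \in V$.

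The second case, $\zeta_0 = \overline{\eta_\infty}$, occurs precisely when $\{\eta_j\}$ is unbounded, so $\infty$ is a limit point of $\overline{\{\eta_j\}}$. Here I use the Remark preceding the lemma: the associated extension $g_\infty$ at $\infty$ produces a function $\widetilde{g_\infty}(w) := 1/g_\infty(1/w)$, holomorphic in a neighborhood of $0$ with $\widetilde{g_\infty}(0) = 0$, satisfying $\widetilde{g_\infty}(1/\eta_j) = \overline{1/\eta_j}$ for $|\eta_j|$ large. The key calculation is to set $w(\theta) := 1/T^{-1}(\theta) = (\theta - \overline{\eta_\infty})/(1 + \theta\eta_\infty)$, which is holomorphic in $\theta$ near $\overline{\eta_\infty}$ (the denominator equals $1 + |\eta_\infty|^2 \neq 0$ at $\theta = \overline{\eta_\infty}$) and vanishes there. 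Substituting $g(\eta) = 1/\widetilde{g_\infty}(1/\eta)$ into the formula for $h$ from the first case and simplifying yields
\begin{eqnarray*}
h(\theta)
\;=\;
\frac{\eta_\infty + \widetilde{g_\infty}(w(\theta))}{1 - \overline{\eta_\infty}\, \widetilde{g_\infty}(w(\theta))}\,,
\end{eqnarray*}
which is a composition of holomorphic functions near $\theta = \overline{\eta_\infty}$, taking value $\eta_\infty$ there (so the denominator is close to $1$ on a neighborhood). Again, $h(\theta_j) = \overline{\theta_j}$ whenever $\theta_j$ is in this neighborhood, just by unwinding the substitution.

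The main obstacle is clearly the boundary case $\zeta_0 = \overline{\eta_\infty}$, where the naive formula from the first case has both numerator and denominator blowing up; the resolution is precisely to change coordinates at $\infty$ using $\widetilde{g_\infty}$ from the Remark, which converts the pole of $T^{-1}$ at $\overline{\eta_\infty}$ into the holomorphic factor $w(\theta)$ vanishing there. Once this second case is handled, the union of the two constructions exhibits the required holomorphic extension at every point of $\overline{\{\theta_j\}_{j\geq 1}}$, proving that $\{\theta_j\}_{j\geq 1}$ is real-analytically interpolated.
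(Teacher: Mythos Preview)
Your proof is correct and follows essentially the same approach as the paper: both split into the cases $\zeta_0\neq\overline{\eta_\infty}$ and $\zeta_0=\overline{\eta_\infty}$, pull back through the M\"obius map to use the local holomorphic interpolant $g$ on the $\eta$-side, and in the second case resolve the pole at $\overline{\eta_\infty}$ via the change of variable $w(\theta)=1/T^{-1}(\theta)$ together with $\widetilde{g_\infty}$ from the preceding Remark. The only cosmetic difference is notation (the paper calls your $T$ by $h$ and writes the interpolant as $g_{w_0}=h^{-1}\circ g_{\zeta_0}\circ h^{-1}$, and in the infinite case as $h_\infty\circ\widetilde{g_\infty}\circ(1/h^{-1})$), but the formulas coincide.
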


\begin{proof}

Let be any
$\eta_{\infty}\notin\overline{\{\eta_j\}_{j\geq1}}$
(such an element exists by Lemma~\ref{analnodense}).
Necessarily $\eta_{\infty}\neq\infty$.
By definition,
$\theta_j=h(\eta_j)$, where
\begin{eqnarray}\label{defh}
h:\;\overline{\C} & \rightarrow & \overline{\C}
\\\nonumber
z & \mapsto & 
\frac{\overline{\eta_{\infty}}z+1}{z-\eta_{\infty}}
\,,
\end{eqnarray}
is homographic. In particular, $h$ is bicontinuous
from $\overline{\C}$ to $\overline{\C}$ then
there is a correspondence between the limit
(resp. isolated) points of
$\{\eta_j\}_{j\geq1}$ and the ones of
$\{\theta_j\}_{j\geq1}$.

Now let be
$w_0\in\overline{\{\theta_j\}_{j\geq1}}$.
$\exists\,!\,\zeta_0\in\overline{\{\eta_j\}_{j\geq1}}$,
$w_0=h(\zeta_0)$. One has
$\zeta_0=\infty$ if and only if
$w_0=\overline{\eta_{\infty}}$.
First, assume that
$\zeta_0\neq\infty$ and let be the associate
$V_{\zeta_0}\in\mathcal{V}(\zeta_0)$,
$g_{\zeta_0}\in\mathcal{O}(V_{\zeta_0})$.
In particular,
$g_{\zeta_0}(\zeta_0)=\overline{\zeta_0}$.
One has, for all $\eta_j\in V_{\zeta_0}$,
\begin{eqnarray*}
\overline{\theta_j}
\;=\;
\overline{
\left(
\frac{1+\overline{\eta_{\infty}}\eta_j}
{\eta_j-\eta_{\infty}}
\right)
}
\;=\;
\frac{1+\eta_{\infty}\overline{\eta_j}}
{\overline{\eta_j}-\overline{\eta_{\infty}}}
\;=\;
\frac{1+\eta_{\infty}g_{\zeta_0}(\eta_j)}
{g_{\zeta_0}(\eta_j)-\overline{\eta_{\infty}}}
\,.
\end{eqnarray*}
On the other hand, one has
$\eta_j=h^{-1}(\theta_j)$, with
\begin{eqnarray*}
h^{-1}:\;\C
& \rightarrow &
\C
\\
w & \mapsto &
\frac{\eta_{\infty}w+1}{w-\overline{\eta_{\infty}}}
\,.
\end{eqnarray*}
Notice that
$h^{-1}(w)=\overline{h\left(\overline{w}\right)}$,
then
\begin{eqnarray*}
\overline{\theta_j}
& = &
h^{-1}
\left(
g_{\zeta_0}(\eta_j)
\right)
\,.
\end{eqnarray*}
Finally, for all
$\theta_j\in\left(h^{-1}\right)^{-1}
(V_{\zeta_0})=h(V_{\zeta_0})$,
\begin{eqnarray}\label{interthetafini}
\overline{\theta_j} & = &
h^{-1}
\left[
g_{\zeta_0}
\left(h^{-1}(\theta_j)\right)
\right]
\,.
\end{eqnarray}
Since
$dist\left(
\overline{\{\eta_j\}_{j\geq1}},\eta_{\infty}
\right)\geq\varepsilon_{\infty}$
and
$w_0\neq\overline{\eta_{\infty}}$,
then
$\exists\,V_{w_0}\in\mathcal{V}(w_0)$,
$\forall\,w\in V_{w_0}$,
$w\neq\overline{\eta_{\infty}}$.
One the other hand,
$g_{\zeta_0}(\zeta_0)=\overline{\zeta_0}\neq\overline{\eta_{\infty}}$,
then by reducing $V_{\zeta_0}$ if necessary, one can assume that,
$\forall\,\zeta\in V_{\zeta_0}$,
$g_{\zeta_0}(\zeta)\neq\overline{\eta_{\infty}}$.
Finally, $\exists\,W_{w_0}$,
$\forall\,w\in W_{w_0}$,
$h^{-1}(w)\in V_{\zeta_0}$.
This allows us to define 
\begin{eqnarray*}\label{defgw0fini}
g_{w_0}\;:\;W_{w_0}
& \rightarrow & \C
\\\nonumber
w & \mapsto &
h^{-1}
\left[
g_{\zeta_0}
\left(
h^{-1}(w)
\right)
\right]
\,,
\end{eqnarray*}
the composed function of
\begin{eqnarray*}
\bordermatrix{ & & & \cr
& W_{w_0}  & \rightarrow & V_{\zeta_0} \cr
& w  & \mapsto & h^{-1}(w) \cr}
,
\bordermatrix{ & & & \cr
& V_{\zeta_0}  & \rightarrow & 
\C\setminus\{\overline{\eta_{\infty}}\} \cr
& \zeta  & \mapsto & g_{\zeta_0}(\zeta) \cr}
\mbox{ and}
\bordermatrix{ & & & \cr
& \C\setminus\{\overline{\eta_{\infty}}\} 
& \rightarrow & \C \cr
& u  & \mapsto & h^{-1}(u) \cr}
.
\end{eqnarray*}
It follows that $g_{w_0}$ is holomorphic and
satisfies by~(\ref{interthetafini}): 
$\forall\,\theta_j\in W_{w_0}$,
$g_{w_0}(\theta_j)=\overline{\theta_j}$.
\bigskip

Now assume that
$\zeta_0=\infty$ (then
$w_0=\overline{\eta_{\infty}}$) and let be the associate
$V_{\infty}\in\mathcal{V}(\infty)$,
$g_{\infty}:V_{\infty}\rightarrow\overline{\C}$,
holomorphic that satisfies:
$\forall\,\eta_j\in V_{\infty}$,
$\overline{\eta_j}=g_{\infty}(\eta_j)$.
On the other hand, let be
$W_0,V_0\in\mathcal{V}(0)$ such that
\begin{eqnarray}
\widetilde{g_{\infty}}
\;:\;W_{0}
& \rightarrow & V_{0}
\\\nonumber
\zeta & \mapsto &
\begin{cases}
\dfrac{1}{g_{\infty}(1/\zeta)}
\mbox{ if }
\zeta\neq0,
\\
0\mbox{ if }
\zeta=0,
\end{cases}
\end{eqnarray}
is holomorphic. By reducing
$V_0$ and $W_0$ if necessary,
one can assume that,
$\forall\,\zeta\in W_0\setminus\{0\}$,
$1/\zeta\in V_{\infty}$ and
$\forall\,w\in V_0$,
$1-\overline{\eta_{\infty}}\,w\neq0$.

Now let be
$W_{\infty}\in\mathcal{V}(\infty)$
such that
$W_{\infty}\subset V_{\infty}$,
$0\notin W_{\infty}$ and $\forall\,\zeta\in W_{\infty}$,
$1/\zeta\in W_0$.
Since we still have,
$\forall\,\eta_j\in W_{\infty}\subset V_{\infty}$,
\begin{eqnarray*}
\overline{\theta_j}
\;=\;
\frac{1+\eta_{\infty}g_{\infty}(\eta_j)}
{g_{\infty}(\eta_j)-\overline{\eta_{\infty}}}
\;=\;
\frac{\eta_{\infty}+1/g_{\infty}(\eta_j)}
{1-\overline{\eta_{\infty}}/g_{\infty}(\eta_j)}
\;=\;
\frac{\eta_{\infty}+\widetilde{g_{\infty}}(1/\eta_j)}
{1-\overline{\eta_{\infty}}\,\widetilde{g_{\infty}}(1/\eta_j)}
\,,
\end{eqnarray*}
and
\begin{eqnarray*}
\eta_j & = &
h^{-1}(\theta_j)
\;=\;
\frac{\eta_{\infty}\theta_j+1}
{\theta_j-\overline{\eta_{\infty}}}
\,,
\end{eqnarray*}
we get, for all
$\theta_j\in h(V_{\infty})$,
\begin{eqnarray}\label{interthetainfini}
\overline{\theta_j} & = &
h_{\infty}
\left[
\widetilde{g_{\infty}}
\left(
1/h^{-1}(\theta_j)
\right)
\right]
\,,
\end{eqnarray}
with
\begin{eqnarray}
h_{\infty}\;:\;
\overline{\C}
& \rightarrow & 
\overline{\C}
\\\nonumber
w & \mapsto &
\frac{\eta_{\infty}+w}
{1-\overline{\eta_{\infty}}\,w}
\,.
\end{eqnarray}
In particular, the restriction
$h_{\infty}:V_0\rightarrow\C$,
is still holomorphic.
If we choose 
$V_{\overline{\eta_{\infty}}}
\in\mathcal{V}(\overline{\eta_{\infty}})$
such that
$V_{\overline{\eta_{\infty}}}\subset h(V_{\infty})$
and,
$\forall\,w\in V_{\overline{\eta_{\infty}}}$ ,
one has
$1/h^{-1}(w)=
\dfrac{w-\overline{\eta_{\infty}}}{\eta_{\infty}w+1}\in W_0$
and
$h(w)=
\dfrac{\eta_{\infty}w+1}{w-\overline{\eta_{\infty}}}\in W_{\infty}$,
the function
\begin{eqnarray}
g_{\overline{\eta_{\infty}}}
\;:\;
V_{\overline{\eta_{\infty}}}
& \rightarrow &
\C
\\\nonumber
w & \mapsto &
\begin{cases}
h_{\infty}
\left[
\widetilde{g_{\infty}}
\left(
1/h^{-1}(w)
\right)
\right]
\mbox{ if }
w\neq\overline{\eta_{\infty}},
\\
h_{\infty}(\widetilde{g_{\infty}}(0))=\eta_{\infty}
\mbox{ if }
w=\overline{\eta_{\infty}},
\end{cases}
\end{eqnarray}
is holomorphic and satisfies by~(\ref{interthetainfini}):
$\forall\,\theta_j\in V_{\overline{\eta_{\infty}}}$,
$g_{\overline{\eta_{\infty}}}(\theta_j)=\overline{\theta_j}$.

\end{proof}

\begin{remark}

We could also prove that the assertion is reciprocal but
it will not be usefull for the result that we want to prove.

\end{remark}

Now we can give the proof of Theorem~\ref{superbe}.

\begin{proof}

Let
$\{\eta_j\}_{j\geq1}$ be a real-analytically interpolated
subset. If it is bounded, then Theorem~\ref{superbe} follows
by Section~\ref{proofs}, Subsection~\ref{proofsuperbounded}.
Otherwise, we know by Lemma~\ref{analnodense} that
$\{\eta_j\}_{j\geq1}$ cannot be dense, then there is
$\eta_{\infty}\notin\overline{\{\eta_j\}_{j\geq1}}
\cup\{\infty\}$.
Let consider the associate bounded subset
$\{\theta_j\}_{j\geq1}$. Thus by Lemma~\ref{equivanal},
$\{\theta_j\}_{j\geq1}$ is bounded and real-analytically interpolated.
It follows by Subsection~\ref{proofsuperbounded} that
$R_N(f;\theta)$ converges for every function
$f\in\mathcal{O}\left(\C^2\right)$.
Finally, by Lemma~\ref{lemmextensio},
$R_N(f;\eta)$ (then $E_N(f;\eta)$) converges for every
$f\in\mathcal{O}\left(\C^2\right)$,
and the theorem is proved.

\end{proof}

\bigskip

\section{A counterexample}\label{ctrex}

Let consider the subset
$\R\cup i\R$. It is a union of real-analytical manifolds of $\C$
($\mathbb{R}$ and $i\mathbb{R}$), but is not a manifold
(problem on $0$).
Here we will deal with which can happen when the sequence
$(\eta_j)_{j\geq1}$ converges to $0$
without staying in one or the other
line (see Introduction)
and show the following result.

\begin{proposition}\label{ctrexample}

There exists a (bounded) subset
$\{\eta_j\}_{j\geq1}\subset\R\cup i\R$
that does not satisfy the condition~(\ref{criter}).
It follows by Theorem~\ref{equivbounded} that
there is (at least) a function
$f\in\mathcal{O}\left(\C^2\right)$ such that
the formula $E_N(f;\eta)$ cannot converge.

\end{proposition}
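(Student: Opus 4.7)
The plan is to construct the bounded sequence $(\eta_j)_{j\geq 1}\subset \R\cup i\R$ in a series of ``blocks'' at rapidly decreasing scales, each block consisting of many small real points clustered near the origin together with a single small imaginary point; then at the indices $p_k$ corresponding to the ends of successive blocks, I will show that $|\Delta_{p_k}(\overline{\zeta}/(1+|\zeta|^2))(\eta_{p_k+1})|$ grows super-exponentially in $p_k$, which suffices to violate~(\ref{criter}) (taking $q=1$) for every $R_\eta$. The geometric reason is that $\overline{\zeta}$ extends to the holomorphic function $\zeta$ on $\R$ and to $-\zeta$ on $i\R$, and any Lagrange interpolation through many real points together with a small imaginary point must carry a huge leading coefficient to reconcile these two disagreeing holomorphic extensions.

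First, on a neighborhood of the origin I write $\overline{\zeta}/(1+|\zeta|^2)=g(\zeta)+H(\zeta)$, where $g(\zeta)=\zeta/(1+\zeta^2)$ is holomorphic on $|\zeta|<1$ and $H$ vanishes identically on $\R$ and equals $-2\zeta/(1-\zeta^4)$ on $i\R$. By Lemma~\ref{Deltanal}, $|\Delta_p(g)(\eta_{p+1})|$ grows at most exponentially in $p$, so any super-exponential lower bound on $|\Delta_p(H)(\eta_{p+1})|$ transfers to the original quantity. The symmetric sum formula
\begin{equation*}
\Delta_p(H)(\eta_{p+1}) \;=\; \sum_{j=1}^{p+1}\frac{H(\eta_j)}{\prod_{l\neq j}(\eta_j-\eta_l)}
\end{equation*}
then involves only the imaginary $\eta_j$. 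Concretely, fix $t_k=2^{-k^2}$ and $L_k=2^k$, set $N_k=L_1+\cdots+L_k$, and in positions $N_{k-1}+1,\dots,N_k$ place the $L_k-1$ distinct reals $\{t_k(1-j/L_k):1\leq j\leq L_k-1\}$ followed by the imaginary point $\eta_{N_k}=it_k$. The sequence lies in $\R\cup i\R$, is bounded by $t_1$, and converges to $0$. For the $l=k$ term in the sum above, all factors in the denominator coming from block $k$ have size $\asymp t_k$ and those from block $l'\neq k$ have size $\asymp\max(t_k,t_{l'})$, yielding magnitude $\asymp t_k^{-(L_k-2)}\prod_{l<k}t_l^{-L_l}$, which grows like $\exp(c\,L_k k^2)$ and so is super-exponential in $N_k$.

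To conclude, by Lemma~\ref{Deltapermut} divided differences are permutation invariant, so I may treat the first $N_k$ points as a set. A short computation shows that for $l<k$ the ratio $|\text{term}_l|/|\text{term}_k|$ carries an extra factor of $(t_k/t_l)^{L_k-2}$, which is super-tiny under our scale choice, so the $l=k$ contribution genuinely dominates the sum. Therefore $|\Delta_{N_k-1}(H)(\eta_{N_k})|^{1/N_k}\to\infty$, condition~(\ref{criter}) fails for every $R_\eta$, and Theorem~\ref{equivbounded} then yields an $f\in\mathcal{O}(\C^2)$ for which $E_N(f;\eta)$ does not converge. The main obstacle is precisely the cross-term analysis: ensuring that contributions from the numerous imaginary points of earlier blocks do not conspire to cancel the enormous $l=k$ term. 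This is handled by the hierarchical choice $t_k\ll t_{k-1}$, which forces each earlier block to be negligible compared to the most recent one; carrying out this bookkeeping uniformly in $k$, while keeping the real points of different blocks well-separated, is the technical heart of the construction.
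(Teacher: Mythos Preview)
Your argument is correct and takes a genuinely different route from the paper's. The paper (via Lemma~\ref{constructctrex}) builds the sequence inductively three points at a time: having chosen $\eta_1,\dots,\eta_{3p}$, it uses the identity $\partial_{\overline{\zeta}}[\Delta_{3p+1}(f)](0)=\partial_{\overline{\zeta}}f(0)/\prod_j(-\eta_j)$ to pick a tiny $\eta_{3p+1}$ making this derivative huge, then a first-order Taylor expansion of $\Delta_{3p+1}(f)$ at $0$ together with a phase-matching choice of $\eta_{3p+2},\eta_{3p+3}\in\R\cup i\R$ to force $|\Delta_{3p+2}(f)(\eta_{3p+3})|\geq (p+1)^{p+1}$. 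Your construction is instead explicit and given in advance: you subtract the holomorphic extension $g(\zeta)=\zeta/(1+\zeta^2)$ valid on $\R$, so that by Lemma~\ref{Deltanal} the divided differences of $g$ are harmlessly exponential, and then exploit the Lagrange/partial-fractions representation of $\Delta_p$ on the remainder $H$, which vanishes on the real nodes and leaves a sum over the few imaginary ones. The paper's method is more general (it applies to any $C^2$ function with $\partial_{\overline{\zeta}}f(0)\neq0$, not just this specific $f$) and ties the blow-up directly to non-holomorphicity at the accumulation point; your method gives a concrete sequence from the outset and makes transparent the mechanism that the two holomorphic extensions $\zeta\mapsto\zeta$ and $\zeta\mapsto-\zeta$ on $\R$ and $i\R$ are incompatible. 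The bookkeeping you flag---controlling the factorial-type constants hidden in the $\asymp$ for the products over each block and checking that earlier-block imaginary contributions are dominated---is routine under the super-geometric gap $t_k=2^{-k^2}$: the leading behaviour of $\log_2(|A_{l'}|/|A_k|)$ is $L_k(l'^2-k^2)$, against corrections of order $O(L_k)$, so for large $k$ the dominance is decisive.
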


We begin with the following result that gives the
construction in a more general case.

\begin{lemma}\label{constructctrex}

Let $f$ be a function of class
$C^2$ in a neighborhood $V$ of $0$ and that is not
$\C$-differentiable on $0$, ie
\begin{eqnarray*}
\frac{\partial f}{\partial\overline{\zeta}}(0)
& \neq &
0\,.
\end{eqnarray*}
Then there exists a bounded sequence
$(\eta_j)_{j\geq1}\subset\R\cup i\R$ such that,
for all $p\geq1$,
\begin{eqnarray*}
\left|
\Delta_{3p-1,(\eta_{3p-1},\ldots,\eta_1)}
(\zeta\mapsto f(\zeta))
(\eta_{3p})
\right|
& \geq &
p^p\,.
\end{eqnarray*}

\end{lemma}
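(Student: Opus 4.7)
The plan is to isolate the "anti-holomorphic" obstruction $c_2\overline{\zeta}$ inside $f$ and exhibit its divided differences along a geometrically decreasing sequence of symmetric triples straddling $\mathbb{R}$ and $i\mathbb{R}$. By Taylor's formula for $C^2$ functions at $0$, I would write $f(\zeta) = f(0) + c_1\zeta + c_2\overline{\zeta} + g(\zeta)$ with $c_2 = \frac{\partial f}{\partial\overline{\zeta}}(0) \neq 0$ and $g\in C^2$ satisfying $g(\zeta)=O(|\zeta|^2)$ near $0$. Since Lemma~\ref{Deltanal} makes the divided differences of $f(0)+c_1\zeta$ vanish from order $2$ on, the task reduces, for $p\geq 2$, to producing a bounded sequence $(\eta_j)_{j\geq 1}\subset\mathbb{R}\cup i\mathbb{R}$ such that $\Delta_{3p-1}(\overline{\zeta})(\eta_{3p})$ blows up while $\Delta_{3p-1}(g)(\eta_{3p})$ stays controlled.

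The construction is inductive in blocks of three. Assuming $\eta_1,\ldots,\eta_{3(p-1)}$ already built (all nonzero, pairwise distinct, in $\mathbb{R}\cup i\mathbb{R}$), I would fix a scale $\delta_p>0$ to be chosen at the end of the step, smaller than $1$ and than half the minimum modulus of the previous points, and set
\begin{eqnarray*}
\eta_{3p-2} := \delta_p,\qquad \eta_{3p-1} := i\delta_p,\qquad \eta_{3p} := -\delta_p.
\end{eqnarray*}
The resulting sequence is bounded, lies in $\mathbb{R}\cup i\mathbb{R}$, and has pairwise distinct terms by construction.

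The heart of the argument is the classical identity
\begin{eqnarray*}
\Delta_{3p-1,(\eta_{3p-1},\ldots,\eta_1)}(h)(\eta_{3p}) & = & \sum_{j=1}^{3p}\frac{h(\eta_j)}{\prod_{k\neq j}(\eta_j-\eta_k)},
\end{eqnarray*}
obtained from Lemma~\ref{lagrangebis} together with Lemma~\ref{Deltapermut}, which I would split into a "new" part ($j\in\{3p-2,3p-1,3p\}$) and an "old" part ($j\leq 3p-3$). A direct computation on the symmetric triple gives
\begin{eqnarray*}
\sum_{j\in\{3p-2,3p-1,3p\}}\frac{\overline{\eta_j}}{\prod_{k\in\{3p-2,3p-1,3p\},\,k\neq j}(\eta_j-\eta_k)} & = & \frac{i}{\delta_p}.
\end{eqnarray*}
As $\delta_p\to 0$ the factor $\prod_{k\,\mathrm{old}}(\eta_j-\eta_k)$ tends to $\pi_p := \prod_{k=1}^{3(p-1)}(-\eta_k)$ for each new $j$, so the new contribution to $\Delta_{3p-1}(\overline{\zeta})$ is $i/(\pi_p\delta_p)+O(1)$; the old contribution stays bounded because each denominator $\prod_{k\neq j}(\eta_j-\eta_k)$ tends to the nonzero quantity $\eta_j^3\prod_{k\,\mathrm{old},\,k\neq j}(\eta_j-\eta_k)$. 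The same splitting applied to $g$ yields a bounded result: for new $j$, the bound $|g(\eta_j)|\leq C\delta_p^2$ exactly cancels the $\delta_p^2$ coming from $\prod_{k\,\mathrm{new},\,k\neq j}(\eta_j-\eta_k)$, and the old contribution remains bounded for fixed $p$. Summing,
\begin{eqnarray*}
\Delta_{3p-1,(\eta_{3p-1},\ldots,\eta_1)}(f)(\eta_{3p}) & = & \frac{ic_2}{\pi_p\delta_p} + O_p(1),
\end{eqnarray*}
with $O_p(1)$ depending only on $p$ and on $\eta_1,\ldots,\eta_{3(p-1)}$. Choosing $\delta_p$ small enough that $|c_2|/(|\pi_p|\delta_p)$ exceeds $p^p+|O_p(1)|$ forces $|\Delta_{3p-1}(f)(\eta_{3p})|\geq p^p$ and closes the induction.

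The main obstacle is purely quantitative bookkeeping: one must check that the $O_p(1)$ correction above, arising both from subleading terms in the expansion of $\prod_{k\,\mathrm{old}}(\eta_j-\eta_k)$ around $\delta_p=0$ and from the "old" indices, stays finite at each fixed $p$, even though $\pi_p$ may itself be very small (the earlier $\delta_{p'}$ were tiny). This is handled by the ordering of the construction: at step $p$ the constant $O_p(1)$ is already determined by the previously chosen points, and $\delta_p$ is only picked afterwards, small enough to absorb it. The symmetric triple $\{\delta_p,i\delta_p,-\delta_p\}$ is used because the new--new divided difference $\Delta_2(\overline{\zeta})$ equals $i/\delta_p$ in closed form, but any triple in $\mathbb{R}\cup i\mathbb{R}$ at scale $\delta_p$ mixing both lines would give a new--new divided difference of order $1/\delta_p$ and would work equally well.
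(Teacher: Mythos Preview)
Your argument is correct and follows a genuinely different route from the paper. The paper never decomposes $f$; instead it observes the identity
\[
\frac{\partial}{\partial\overline{\zeta}}\bigl[\Delta_{p,(\eta_p,\ldots,\eta_1)}(f)\bigr](\zeta)=\frac{1}{\prod_{j=1}^p(\zeta-\eta_j)}\,\frac{\partial f}{\partial\overline{\zeta}}(\zeta),
\]
first picks $\eta_{3p+1}$ small so that $\bigl|\partial_{\overline{\zeta}}\Delta_{3p+1}(f)(0)\bigr|\geq(p+1)^{p+1}+1$, and then Taylor-expands $\Delta_{3p+1}(f)$ at $0$ to write $\Delta_{3p+2}(f)(\eta_{3p+3})$ as $\partial_\zeta\Delta_{3p+1}(f)(0)+\frac{\overline{\eta_{3p+3}}-\overline{\eta_{3p+2}}}{\eta_{3p+3}-\eta_{3p+2}}\,\partial_{\overline{\zeta}}\Delta_{3p+1}(f)(0)+O(1)$. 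The pair $(\eta_{3p+2},\eta_{3p+3})$ is then chosen \emph{adaptively}, according to the phase of $\partial_\zeta\Delta_{3p+1}(f)(0)/\partial_{\overline{\zeta}}\Delta_{3p+1}(f)(0)$, splitting into three cases ($e^{i\theta}=1$, $e^{i\theta}=-1$, otherwise) to prevent cancellation. By contrast, you Taylor-expand $f$ itself, use the Lagrange partial-fraction representation of the divided difference, and always take the fixed triple $\{\delta_p,i\delta_p,-\delta_p\}$, the closed-form $\Delta_2(\overline{\zeta})=i/\delta_p$ doing all the work. Your approach is more explicit and sidesteps the case analysis; the paper's approach is perhaps more intrinsic (it never separates out the $O(|\zeta|^2)$ remainder) and makes transparent why the obstruction is exactly $\partial_{\overline{\zeta}}f(0)\neq 0$ through the propagation formula above. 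One cosmetic point: your restriction ``for $p\geq2$'' is unnecessary, since $\Delta_{3p-1}(f(0)+c_1\zeta)$ already vanishes for $p=1$.
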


\begin{proof}

For all differents and nonzero
$\eta_1,\ldots,\eta_{p}\in\R\cup i\R$, the function
$\zeta\mapsto\Delta_{p,(\eta_p,\ldots,\eta_1)}(f)(\zeta)$
is still of class $C^2$ on
$V\setminus\{\eta_1,\ldots,\eta_p\}$.
Moreover,
\begin{eqnarray*}
\frac{\partial}{\partial\overline{\zeta}}
\left[\Delta_p(f)\right](\zeta)
& = &
\frac{\partial}{\partial\overline{\zeta}}
\left[
\frac{
\Delta_{p-1}(f)(\zeta)-\Delta_{p-1}(f)(\eta_p)}
{\zeta-\eta_p}
\right]
\\
& = &
\frac{1}{\zeta-\eta_p}
\frac{\partial}{\partial\overline{\zeta}}
\left[\Delta_{p-1}(f)\right](\zeta)
\;=\;
\cdots
\\
& = &
\frac{1}{(\zeta-\eta_p)(\zeta-\eta_{p-1})
\cdots
(\zeta-\eta_1)}
\frac{\partial}{\partial\overline{\zeta}}
\left[\Delta_{0}(f)\right](\zeta)
\\
& = &
\frac{
\dfrac{\partial f}{\partial\overline{\zeta}}
(\zeta)}
{(\zeta-\eta_p)
\cdots
(\zeta-\eta_1)}
\,.
\end{eqnarray*}
Now let be $\eta_1,\ldots,\eta_{3p-1},\eta_{3p}$
all different and nonzero, and let be
$\eta_{3p+1}\in(\R\cup i\R)\cap(V\setminus\{0,\eta_1,\ldots,\eta_{3p}\})$.
Since
$\dfrac{\partial f}{\partial\overline{\zeta}}(0)\neq0$,
one has
\begin{eqnarray*}
\frac{\partial}{\partial\overline{\zeta}}
\left[\Delta_{3p+1}(f)\right](0)
& = &
\frac{
\dfrac{\partial f}{\partial\overline{\zeta}}
(0)}
{(-\eta_{3p+1})(-\eta_{3p})
\cdots
(-\eta_1)}
\\
& = &
\frac{-1}{\eta_1\cdots\eta_{3p}\,\eta_{3p+1}}
\;
\frac{\partial f}{\partial\overline{\zeta}}
(0)
\;\xrightarrow[\eta_{3p+1}\rightarrow0]{}\;
\infty
\,.
\end{eqnarray*}
Let fix
$\eta_{3p+1}\in\R\cup i\R$ with 
$0<|\eta_{3p+1}|<\min_{1\leq i\leq3p}|\eta_i|$
(then 
$\eta_{3p+1}\in V\setminus\{0,\eta_1,\ldots,\eta_{3p}\}$)
and such that
\begin{eqnarray}\label{grandbarre}
\left|
\frac{\partial}{\partial\overline{\zeta}}
\left[\Delta_{3p+1}(f)\right](0)
\right|
& \geq &
(p+1)^{p+1}+1\,.
\end{eqnarray}
Now for all
$\eta_{3p+2},\eta_{3p+3}\in\R\cup i\R$,
different and such that
$0<|\eta_{3p+2}|,|\eta_{3p+2}|<\min_{1\leq i\leq3p+1}|\eta_i|$,
one has
\begin{eqnarray*}
\Delta_{3p+2,(\eta_{3p+2},\eta_{3p+1},\ldots,\eta_1)}
(f)(\eta_{3p+3})
& = &
\;\;\;\;\;\;\;\;\;\;\;\;\;\;\;\;\;\;\;\;\;\;\;\;\;\;\;\;\;\;\;\;\;\;
\;\;\;\;\;\;\;\;\;\;\;\;\;\;\;\;\;\;\;\;\;\;\;\;\;\;\;\;\;\;\;\;\;\;\;
\end{eqnarray*}
\begin{eqnarray*}
& = &
\frac{
\Delta_{3p+1,(\eta_{3p+1},\ldots,\eta_1)}
(f)(\eta_{3p+3})
-
\Delta_{3p+1,(\eta_{3p+1},\ldots,\eta_1)}
(f)(\eta_{3p+2})
\pm\Delta_{3p+1,(\eta_{3p+1},\ldots,\eta_1)}
(f)(0)
}
{\eta_{3p+3}-\eta_{3p+2}}
\\
& = &
\frac{1}{\eta_{3p+3}-\eta_{3p+2}}
\left[
\eta_{3p+3}
\frac{\partial}{\partial\zeta}
\Delta_{3p+1}(f)(0)
+
\overline{\eta_{3p+3}}
\frac{\partial}{\partial\overline{\zeta}}
\Delta_{3p+1}(f)(0)
+
O(\eta_{3p+3}^2)
\right]
\\
& &
-\;
\frac{1}{\eta_{3p+3}-\eta_{3p+2}}
\left[
\eta_{3p+2}
\frac{\partial}{\partial\zeta}
\Delta_{3p+1}(f)(0)
+
\overline{\eta_{3p+2}}
\frac{\partial}{\partial\overline{\zeta}}
\Delta_{3p+1}(f)(0)
+
O(\eta_{3p+2}^2)
\right]
\\
& = &
\frac{\partial}{\partial\zeta}
\Delta_{3p+1}(f)(0)
+
\frac{\overline{\eta_{3p+3}}-\overline{\eta_{3p+2}}}
{\eta_{3p+3}-\eta_{3p+2}}
\frac{\partial}{\partial\overline{\zeta}}
\Delta_{3p+1}(f)(0)
+\frac{O(\eta_{3p+3}^2)+O(\eta_{3p+2}^2)}{\eta_{3p+3}-\eta_{3p+2}}
\,.
\end{eqnarray*}
On the other hand, since
$\Delta_{3p+1}(f)(\zeta)$ is of class $C^2$ on
$V\setminus\{\eta_1,\ldots,\eta_{3p+1}\}$, there is
$\varepsilon_{p+1}>0$ small enough with
$0<\varepsilon_{p+1}<\min_{1\leq i\leq3p+1}|\eta_i|$,
$\overline{D(0,\varepsilon_{p+1})}\subset V$,
such that
\begin{eqnarray*}
M_{p+1}
& := &
\sup_{0<|\zeta|\leq\varepsilon_{p+1}}
\left|
\frac{O\left(\zeta^2\right)}
{\zeta^2}
\right|
\;<\;
+\infty
\end{eqnarray*}
($\varepsilon_{p+1}$ and $M_{p+1}$ will only depend on
$f$ and $\eta_1,\ldots,\eta_{3p+1}$).

Now we assume that 
$\dfrac{\partial}{\partial\zeta}
\Delta_{3p+1}(f)(0)\neq0$.
Then
\begin{eqnarray*}
\frac{\dfrac{\partial}{\partial\zeta}
\Delta_{3p+1}(f)(0)}
{\dfrac{\partial}{\partial\overline{\zeta}}
\Delta_{3p+1}(f)(0)}
\;=\;
\frac{
\left|
\dfrac{\partial}{\partial\zeta}
\Delta_{3p+1}(f)(0)
\right|
e^{i\varphi_1}
}
{
\left|
\dfrac{\partial}{\partial\overline{\zeta}}
\Delta_{3p+1}(f)(0)
\right|
e^{i\varphi_2}
}
\;=\;
\left|
\frac{
\dfrac{\partial}{\partial\zeta}
\Delta_{3p+1}(f)(0)
}
{
\dfrac{\partial}{\partial\overline{\zeta}}
\Delta_{3p+1}(f)(0)
}
\right|
e^{i\theta}
\,.
\end{eqnarray*}
Three cases can happen.

\begin{enumerate}

\item

If $e^{i\theta}=1$,
we choose
$\eta_{3p+2},\eta_{3p+3}\in\R$ with
$\eta_{3p+3}=-\eta_{3p+2}$
and
$0<|\eta_{3p+2}|=|\eta_{3p+3}|\leq
\min(\varepsilon_{p+1},1/M_{p+1})$.
One has
\begin{eqnarray*}
\left|
\frac{O(\eta_{3p+3}^2)+O(\eta_{3p+2}^2)}
{\eta_{3p+3}-\eta_{3p+2}}
\right|
& \leq &
\frac{2M_{p+1}\,\eta_{3p+2}^2}{2|\eta_{3p+2}|}
\;=\;
M_{p+1}|\eta_{3p+2}|
\;\leq\;1
\,,
\end{eqnarray*}
then by~(\ref{grandbarre})
\begin{eqnarray*}
\left|
\Delta_{3p+2}(f)(\eta_{3p+3})
\right|
& \geq &
\left|
\frac{\partial}{\partial\zeta}
\Delta_{3p+1}(f)(0)
+
\frac{\overline{\eta_{3p+3}}-\overline{\eta_{3p+2}}}
{\eta_{3p+3}-\eta_{3p+2}}
\frac{\partial}{\partial\overline{\zeta}}
\Delta_{3p+1}(f)(0)
\right|
\\
& &
\;\;\;\;\;\;\;\;\;\;\;\;\;\;\;\;\;\;\;\;\;\;\;\;\;\;\;\;\;\;\;
\;\;\;\;\;\;\;\;\;\;\;\;\;\;\;\;
-\;
\left|
\frac{O(\eta_{3p+3}^2)+O(\eta_{3p+2}^2)}{\eta_{3p+3}-\eta_{3p+2}}
\right|
\\
& \geq &
\left|
\frac{\partial}{\partial\zeta}
\Delta_{3p+1}(f)(0)
+
\frac{\partial}{\partial\overline{\zeta}}
\Delta_{3p+1}(f)(0)
\right|
-1
\\
& = &
\left|
\left|
\frac{
\dfrac{\partial}{\partial\zeta}
\Delta_{3p+1}(f)(0)
}
{
\dfrac{\partial}{\partial\overline{\zeta}}
\Delta_{3p+1}(f)(0)
}
\right|
\frac{\partial}{\partial\overline{\zeta}}
\Delta_{3p+1}(f)(0)
+
\frac{\partial}{\partial\overline{\zeta}}
\Delta_{3p+1}(f)(0)
\right|
-1
\\
& \geq &
\left|
\frac{\partial}{\partial\overline{\zeta}}
\Delta_{3p+1}(f)(0)
\right|
-1
\;\geq\;
(p+1)^{p+1}
\,.
\end{eqnarray*}

\item

If $e^{i\theta}=-1$,
we choose
$\eta_{3p+2},\eta_{3p+3}\in\,i\R$ with
$\eta_{3p+3}=-\eta_{3p+2}$,
$0<|\eta_{3p+2}|=|\eta_{3p+3}|\leq
\min(\varepsilon_{p+1},1/M_{p+1})$,
such that
\begin{eqnarray*}
\left|
\frac{O(\eta_{3p+3}^2)+O(\eta_{3p+2}^2)}
{\eta_{3p+3}-\eta_{3p+2}}
\right|
& \leq &
M_{p+1}|\eta_{3p+2}|
\;\leq\;1
\,,
\end{eqnarray*}
then by~(\ref{grandbarre})
\begin{eqnarray*}
\left|
\Delta_{3p+2}(f)(\eta_{3p+3})
\right|
& \geq &
\left|
\frac{\partial}{\partial\zeta}
\Delta_{3p+1}(f)(0)
+
\frac{\overline{\eta_{3p+3}}-\overline{\eta_{3p+2}}}
{\eta_{3p+3}-\eta_{3p+2}}
\frac{\partial}{\partial\overline{\zeta}}
\Delta_{3p+1}(f)(0)
\right|
-1
\\
& = &
\left|
\frac{\partial}{\partial\zeta}
\Delta_{3p+1}(f)(0)
-
\frac{\partial}{\partial\overline{\zeta}}
\Delta_{3p+1}(f)(0)
\right|
-1
\\
& = &
\left|
-
\left|
\frac{
\dfrac{\partial}{\partial\zeta}
\Delta_{3p+1}(f)(0)
}
{
\dfrac{\partial}{\partial\overline{\zeta}}
\Delta_{3p+1}(f)(0)
}
\right|
\frac{\partial}{\partial\overline{\zeta}}
\Delta_{3p+1}(f)(0)
-
\frac{\partial}{\partial\overline{\zeta}}
\Delta_{3p+1}(f)(0)
\right|
-1
\\
& \geq &
\left|
\frac{\partial}{\partial\overline{\zeta}}
\Delta_{3p+1}(f)(0)
\right|
-1
\;\geq\;
(p+1)^{p+1}
\,.
\end{eqnarray*}

\item

Otherwise
$e^{i\theta}\neq\pm1$,
ie
$e^{i\theta}=\cos\theta+i\sin\theta$
with
$\sin\theta\neq0$. We choose
\begin{eqnarray*}
\begin{cases}
\eta_{3p+2}:=r_{p+1}\cos(\theta/2)
\\
\eta_{3p+3}:=ir_{p+1}\sin(\theta/2)
\end{cases}
,
\end{eqnarray*}
with 
$0<r_{p+1}\leq\min(\varepsilon_{p+1},1/M_{p+1})$.
Since
$\theta/2\neq0\;(\mbox{mod}\;\pi/2)$
then
$\eta_{3p+2},\eta_{3p+3}$ are nonzero and different.
One has
\begin{eqnarray*}
\left|
\frac{O(\eta_{3p+3}^2)+O(\eta_{3p+2}^2)}
{\eta_{3p+3}-\eta_{3p+2}}
\right|
& \leq &
\frac{
M_{p+1}(r_{p+1}\cos(\theta/2))^2
+
M_{p+1}(r_{p+1}\sin(\theta/2))^2
}
{|r_{p+1}\cos(\theta/2)-ir_{p+1}\sin(\theta/2)|}
\\
& = &
\frac{M_{p+1}r_{p+1}^2}{r_{p+1}}
\;=\;
M_{p+1}r_{p+1}
\;\leq\;
1\,.
\end{eqnarray*}
On the other hand,
\begin{eqnarray*}
\frac{
\overline{\eta_{3p+3}}
-
\overline{\eta_{3p+2}}
}
{\eta_{3p+3}-\eta_{3p+2}}
& = &
\frac{
-ir_{p+1}\sin(\theta/2)
-
r_{p+1}\cos(\theta/2)
}
{
ir_{p+1}\sin(\theta/2)
-r_{p+1}\cos(\theta/2)
}
\\
& = &
\frac{\cos(\theta/2)+i\sin(\theta/2)}
{\cos(\theta/2)-i\sin(\theta/2)}
\;=\;
e^{i\theta}
\,.
\end{eqnarray*}
It follows by~(\ref{grandbarre}) that
\begin{eqnarray*}
\left|
\Delta_{3p+2}(f)(\eta_{3p+3})
\right|
& \geq &
\left|
\frac{\partial}{\partial\zeta}
\Delta_{3p+1}(f)(0)
+
\frac{\overline{\eta_{3p+3}}-\overline{\eta_{3p+2}}}
{\eta_{3p+3}-\eta_{3p+2}}
\frac{\partial}{\partial\overline{\zeta}}
\Delta_{3p+1}(f)(0)
\right|
-1
\\
& = &
\left|
\left|
\frac{
\dfrac{\partial}{\partial\zeta}
\Delta_{3p+1}(f)(0)
}
{
\dfrac{\partial}{\partial\overline{\zeta}}
\Delta_{3p+1}(f)(0)
}
\right|
e^{i\theta}
\frac{\partial}{\partial\overline{\zeta}}
\Delta_{3p+1}(f)(0)
+
e^{i\theta}
\frac{\partial}{\partial\overline{\zeta}}
\Delta_{3p+1}(f)(0)
\right|
-1
\\
& \geq &
\left|
\frac{\partial}{\partial\overline{\zeta}}
\Delta_{3p+1}(f)(0)
\right|
-1
\;\geq\;
(p+1)^{p+1}
\,.
\end{eqnarray*}

\end{enumerate}

Now if
$\dfrac{\partial}{\partial\zeta}
\Delta_{3p+1}(f)(0)=0$,
one still has for example
\begin{eqnarray*}
\dfrac{\partial}{\partial\zeta}
\Delta_{3p+1}(f)(0)
& = &
\left|
\frac{
\dfrac{\partial}{\partial\zeta}
\Delta_{3p+1}(f)(0)
}
{
\dfrac{\partial}{\partial\overline{\zeta}}
\Delta_{3p+1}(f)(0)
}
\right|
\frac{\partial}{\partial\overline{\zeta}}
\Delta_{3p+1}(f)(0)
\,,
\end{eqnarray*}
and we go back to the first case that is already done.

Finally, there are
$\eta_{3p+1},\,\eta_{3p+2},\,\eta_{3p+3}\in\R\cup i\R$,
different with
$0<|\eta_{3p+1}|,\,|\eta_{3p+2}|,\,|\eta_{3p+3}|
<\min_{1\leq i\leq3p}|\eta_i|$ and such that
\begin{eqnarray}\label{induction}
\left|
\Delta_{3p+2,(\eta_{3p+2},\ldots,\eta_1)}
(f)(\eta_{3p+3})
\right|
& \geq &
(p+1)^{p+1}
\,.
\end{eqnarray}
\bigskip

This allows us to construct the sequence
$(\eta_j)_{j\geq1}=(\eta_{3p+1},\eta_{3p+2},\eta_{3p+3})_{p\geq0}$
by induction on $p\geq0$. We begin with
$\eta_1,\eta_2,\eta_3\in(\R\cup i\R)\cap V$, different
and such that
\begin{eqnarray*}
\left|
\Delta_{2,(\eta_{2},\eta_1)}
(f)(\eta_{3})
\right|
& \geq &
1
\,.
\end{eqnarray*}
Now if we assume having constructed
$\eta_1,\ldots,\eta_{3p-1},\eta_{3p}$ such that,
$\forall\,j=1,\ldots,p$,
\begin{eqnarray*}
\left|
\Delta_{3j-1,(\eta_{3j-1},\ldots,\eta_1)}
(f)(\eta_{3j})
\right|
& \geq &
j^j
\,,
\end{eqnarray*}
we can construct
$\eta_{3p+1},\eta_{3p+2}$ and $\eta_{3p+3}$, different,
distinct from
$\eta_1,\ldots,\eta_{3p}$ and that satisfy
by~(\ref{induction})
\begin{eqnarray*}
\left|
\Delta_{3p+2,(\eta_{3p+2},\ldots,\eta_1)}
(f)(\eta_{3p+3})
\right|
& \geq &
(p+1)^{p+1}
\,,
\end{eqnarray*}
and this proves the induction. On the other hand,
the sequence $(\eta_j)_{j\geq1}$ is bounded,
thus the proof of the lemma is achieved.

\end{proof}

\begin{remark}\label{ctrexcv}

We can also construct $(\eta_j)_{j\geq1}$ such that it converges
to $0$. Indeed, in the proof, we can also assume that
$0<|\eta_{3p+1}|<
\min\left(\min_{1\leq i\leq3p}|\eta_i|,\,1/(3p+1)\right)$. Since
by construction
$0<|\eta_{3p+2}|,|\eta_{3p+3}|<|\eta_{3p+1}|$, then
$\lim_{j\rightarrow\infty}\eta_j=0$.

\end{remark}

Now we can give the proof of Proposition~\ref{ctrexample}.

\begin{proof}

We consider
\begin{eqnarray*}
f(\zeta) & = &
\frac{\overline{\zeta}}{1+|\zeta|^2}
\;=\;
\frac{\overline{\zeta}}{1+\zeta\overline{\zeta}}
\,.
\end{eqnarray*}
Then
$f\in C^{\infty}(\C)$ and
$\forall\,\zeta\in\C$,
\begin{eqnarray*}
\frac{\partial f}{\partial\overline{\zeta}}
(\zeta)
& = &
\frac{1+|\zeta|^2-\overline{\zeta}\,\zeta}
{(1+|\zeta|^2)^2}
\;=\;
\frac{1}{(1+|\zeta|^2)^2}
\,.
\end{eqnarray*}
In particular,
$\dfrac{\partial f}{\partial\overline{\zeta}}
(0)\neq0$,
then $f$ satisfies the conditions of Lemma~\ref{constructctrex}.
It follows that there is a (bounded) sequence
$(\eta_j)_{j\geq1}\subset\R\cup i\R$ that satisfies:
$\forall\,p\geq1$,
\begin{eqnarray*}
\left|
\Delta_{3p-1,(\eta_{3p-1},\ldots,\eta_1)}
\left(
\zeta\mapsto
\frac{\overline{\zeta}}{1+|\zeta|^2}
\right)
(\eta_{3p})
\right|
& \geq &
p^p
\,.
\end{eqnarray*}
Therefore the condition~(\ref{criter})
of Theorem~\ref{equivbounded} is not satisfied
with $\{\eta_j\}_{j\geq1}$ since
there cannot exist
$R\geq1$ such that, for all
$p\geq1$, one has
\begin{eqnarray*}
p^p & \leq &
R^{3p-1+1}
\;=\;
\left(R^3\right)^p
\,.
\end{eqnarray*}

\end{proof}

\end{document}